\documentclass[11pt,A4paper]{article}

\usepackage[left=30mm,right=30mm,top=25mm,bottom=30mm]{geometry}
\usepackage{labelfig}
\usepackage{float}
\usepackage{xfrac}

\usepackage[percent]{overpic}

\usepackage{color}
\usepackage{amsthm,amsmath,amssymb}
\usepackage{booktabs}
\usepackage{mathpazo}
\usepackage{microtype}
\usepackage{overpic}
\usepackage{bm}
\usepackage{sectsty}
\usepackage[pdftitle={PDFTitle},pdfauthor={Hugo Parlier},ocgcolorlinks,linkcolor=linkred,citecolor=linkred,urlcolor=linkblue]{hyperref}

\usepackage{ifpdf}

\usepackage{mathtools}


\definecolor{linkred}{RGB}{128,0,128}
\definecolor{linkblue}{RGB}{16, 78, 139}

\usepackage[hang,flushmargin]{footmisc}
\usepackage{enumitem}
\usepackage{titlesec}
	\titlespacing{\section}{0pt}{12pt}{0pt}
	\titlespacing{\subsection}{0pt}{6pt}{0pt}

\titlelabel{\thetitle.\quad}

\makeatletter 

\long\def\@footnotetext#1{%
\H@@footnotetext{%
\ifHy@nesting 
\hyper@@anchor{\@currentHref}{#1}%
\else 
\Hy@raisedlink{\hyper@@anchor{\@currentHref}{\relax}}#1%
\fi 
}}

\def\@footnotemark{%
\leavevmode 
\ifhmode\edef\@x@sf{\the\spacefactor}\nobreak\fi 
\H@refstepcounter{Hfootnote}%
\hyper@makecurrent{Hfootnote}%
\hyper@linkstart{link}{\@currentHref}%
\@makefnmark 
\hyper@linkend 
\ifhmode\spacefactor\@x@sf\fi 
\relax 
}%

\ifFN@multiplefootnote%
\renewcommand*\@footnotemark{%
\leavevmode 
\ifhmode 
\edef\@x@sf{\the\spacefactor}%
\FN@mf@check 
\nobreak 
\fi 
\H@refstepcounter{Hfootnote}%
\hyper@makecurrent{Hfootnote}%
\hyper@linkstart{link}{\@currentHref}%
\@makefnmark 
\hyper@linkend 
\ifFN@pp@towrite 
\FN@pp@writetemp 
\FN@pp@towritefalse 
\fi 
\FN@mf@prepare 
\ifhmode\spacefactor\@x@sf\fi 
\relax%
}%
\fi

\makeatother 

\theoremstyle{plain}
\newtheorem{theorem}{Theorem}[section]
\newtheorem{proposition}[theorem]{Proposition}
\newtheorem{lemma}[theorem]{Lemma}
\newtheorem{corollary}[theorem]{Corollary}

\makeatletter
\newtheorem*{rep@theorem}{\rep@title}
\newcommand{\newreptheorem}[2]{%
\newenvironment{rep#1}[1]{%
 \def\rep@title{#2 \ref{##1}}%
 \begin{rep@theorem}}%
 {\end{rep@theorem}}}
\makeatother

\newreptheorem{theorem}{Theorem}
\newreptheorem{corollary}{Corollary}

\theoremstyle{definition}
\newtheorem{definition}[theorem]{Definition}

\newtheorem{remark}[theorem]{Remark}
\newtheorem{observation}[theorem]{Observation}

\newcommand{\R}{{\mathbb R}}

\newcommand{\Hyp}{{\mathbb H}}
\newcommand{\N}{{\mathbb N}}

\newcommand{\C}{{\mathbb C}}

\newcommand{\CC}{{\mathcal C}}

\newcommand{\OO}{{\mathcal O}}

\newcommand{\M}{{\mathcal M}}
\newcommand{\PSL}{{\rm PSL}}

\newcommand{\arcsinh}{{\,\rm arcsinh}}
\newcommand{\arccosh}{{\,\rm arccosh}}

\newcommand{\measure}{\lambda}
\newcommand{\bigmeasure}{\lambda_{\vec{k}}}

\newcommand{\PP}{\mathcal P}

\linespread{1.25}

\sectionfont{\large \bfseries}
\subsectionfont{\normalsize}

\setlength{\parindent}{0pt}
\setlength{\parskip}{6pt}


\long\def\symbolfootnote[#1]#2{\begingroup%
\def\thefootnote{\fnsymbol{footnote}}\footnote[#1]{#2}\endgroup}

\def\blfootnote{\xdef\@thefnmark{}\@footnotetext}

\begin{document}

{\Large \bfseries Prime orthogeodesics, concave cores and families of identities on hyperbolic surfaces}

{\large Ara Basmajian\symbolfootnote[1]{\normalsize Supported by a PSC-CUNY Grant and a grant from the Simons foundation (359956, A.B.)}, Hugo Parlier\symbolfootnote[2]{\normalsize Supported by the Luxembourg National Research Fund OPEN grant O19/13865598.}
 and Ser Peow Tan\symbolfootnote[3]{\normalsize Supported by the National Univ. of Singapore academic research grant R-146-000-289-114.\\
{\em 2020 Mathematics Subject Classification:} Primary: 32G15, 37D40, 57K20. Secondary: 30F60, 37E35, 53C22. \\
{\em Key words and phrases:} Identities, trace equalities, moduli spaces of hyperbolic surfaces, orthogeodesics, concave cores, immersed pairs of pants}}

{\bf Abstract.} 
We prove and explore a family of identities relating lengths of curves and orthogeodesics of hyperbolic surfaces. These identities hold over a large space of metrics including ones with hyperbolic cone points, and in particular, show how to extend a result of the first author to surfaces with cusps. One of the main ingredients in the approach is a partition of the set of orthogeodesics into sets depending on their dynamical behavior, which can be understood geometrically by relating them to geodesics on orbifold surfaces. These orbifold surfaces turn out to be exactly on the boundary of the space in which the underlying identity holds.
\vspace{2cm}


\section{Introduction}\label{sec:intro}

In the study of moduli spaces of hyperbolic surfaces, a number of identities relating the lengths of geodesics have been explored over the past few decades. These sums have the remarkable property of remaining constant over an entire moduli space while the individual summands vary continuously. 

A prime example is the McShane identity \cite{McShane} for cusped surfaces, generalized by Mirzakhani to surfaces with geodesic boundary \cite{Mirzakhani} and generalized to surfaces with boundary and/or cone points by Tan, Wong and Zhang \cite{Tan-Wong-Zhang}. The original McShane identity was for once-punctured tori but was generalized to once-punctured genus $g$ surfaces and states that
$$
\sum_{P} \frac{2}{e^{\frac{\ell(\alpha)+\ell(\beta)}{2}} +1} = 1,
$$
where the sum is taken over all embedded (geodesic) pairs of pants $P$ with boundary elements the puncture and simple geodesics $\alpha$ and $\beta$. 

We highlight three ingredients of the identity. First of all the index set: in this case the set of embedded pairs of pants. To each element of the index set is associated a function which depends on the geometry of this element: here to the pair of pants one associates a value which depends on its cuff lengths. Finally, the sum of these functions is equal to the measure of a geometric feature of the surface which remains stable over the moduli space (here the $1$ is the length of a horocycle). This philosophy of breaking down the measure of a geometric feature into elements of full measure with complementary measure $0$ is found in all of the identities which will be discussed. In particular, in the generalizations by Mirzakhani and Tan-Wong-Zhang, all of these features are present, and the only thing that changes slightly is that the full measure is now the measure of the boundary, either length or angle. Appropriately scaled, putting these identities all together gives a continuous set of identities when one varies the boundary element from being a simple closed geodesic to a cusp and then to a cone-point. 

Around the same time, Basmajian showed a seemingly similar identity, this time for surfaces with boundary geodesics. In the case of a surface $X$ with a single boundary curve $\beta$, the identity is as follows:
$$
\sum_{\mu \in \OO(X)} 2 \log\left(\coth\frac{\ell(\mu)}{2}\right) = \ell(\beta)
$$
where here the sum is taken over all {\it orthogeodesics} of $X$. These are the immersed oriented geodesic segments twice perpendicular to the boundary. In particular, note that they are not necessarily simple, and in fact most are not. Also note that, in contrast to the previous identities, the summands only depend on the length of a single geodesic while the right-hand side of the equation does not. In particular, this means that knowing the set of lengths of orthogeodesics (with multiplicities) provides the boundary length, however impractical it might be to compute boundary length with this method. The identity fails to hold for surfaces without any boundary geodesics, and fails even for cusps using the same scaling trick to obtain the McShane identity via the Mirzakhani or Tan-Wong-Zhang identities. 

The same index set appears in an identity by Bridgeman, again for surfaces with at least one geodesic boundary curve. Bridgeman's identity results from a decomposition of the unit tangent bundle of a surface into subsets that correspond to different orthogeodesics. In turn the idea of decomposing the unit tangent bundle was used by Luo and Tan \cite{Luo-Tan} to obtain an identity which also works for closed hyperbolic surfaces, and where the index set is (necessarily) very different. In both cases, the measure comes from the standard volume on the unit tangent bundle, but the Luo-Tan identity decomposes the unit tangent bundle into sets indexed by embedded geodesic pants and one-holed tori.

We present here a family of identities which generalize the orthospectrum identity by Basmajian into an identity that extends beyond the case of surfaces with geodesic boundary. Our identities are also decompositions of the boundary of a surface, but in order to be able to look at surfaces with cusps or cone points, we decompose the boundary of a subset of the surface obtained by cutting off collar regions of boundary elements. For this we introduce a notion of {\it natural} collars which depends on the behavior of geodesics in the neighborhood of a boundary element of a surface $X$ and is indexed by a sequence of integers $\vec{k} = (k_1,\hdots,k_n)$, one for each boundary element. We refer to this set of integers as a {\it grading} and an individual $k_i$ as a {\it grade}, and a grade can be {\it infinite}. We call the complementary regions to these natural collars the ($\vec{k}$-relative) concave core (denoted $V_{\vec{k}}(X)$) and our identities are a decomposition of the boundary of the concave cores. 

A main feature is the index sets. Note that the index set for the McShane identity is usually thought of as a set of embedded pants, but can also be viewed as the set of {\it simple} orthogeodesics, a subset of $\OO(X)$. The measures associated to each simple orthogeodesic depend on more than the length of the orthogeodesic however. One interpretation of the McShane identity is to view it as a way of grouping elements of $\OO(X)$ together in terms of the initial geodesic behavior of orthogeodesics. We also consider ways of grouping elements of $\OO(X)$, but in a variety of ways which provide an exhaustive family of subsets of $\OO(X)$ and which we describe from topological and geometric viewpoints. 

Given $\vec{k}$, orthogeodesics of $V_{\vec{k}}(X)$ (denoted $\OO_{\vec{k}}(X)$) correspond to the subset of $\OO(X)$ which never wrap $k_i$ times around boundary element $i$, so as sets, don't depend on the geometry of $X$. From a topological viewpoint, we associate to each such orthogeodesic $\mu$, which lies between two boundary curves, say $\alpha$ and $\beta$, an immersed pair of pants. The boundary curves of the immersed pair of pants are given by concatenations of $\mu$, $\alpha$ and $\beta$, depending on the gradings of $\alpha$ and $\beta$. For instance if both $\alpha$ and $\beta$ have grade $1$, the boundary curves of the immersed pair of pants are $\alpha$, $\beta$ and $\alpha *\mu*\beta*\mu^{-1}$. In general, we shall consider sets of immersed pair of pants $\PP_{\vec{k}}(X)$ which are associated to a grading $\vec{k}$ (see Section \ref{sec:setup} for details). Unlike embedded pairs of pants, there are infinitely many immersed pair of pants inside any given immersed pair of pants. However, certain elements of $\PP_{\vec{k}}(X)$ are special: they are not contained inside any other element, and we call them, and their associated orthogeodesic, $\vec{k}$-prime. We can now state our abstract identity which works for surfaces satisfying a condition we shall outline below.

\begin{theorem}\label{thm:abstract}
Any $X \in \M^{\vec{k}}(\Sigma)$ satisfies
$$
\sum_{\mu \in \OO'_{\vec{k}}(X)} \bigmeasure(\mu) = \ell\left(\partial V_{\vec{k}}(X)\right)
$$
\end{theorem}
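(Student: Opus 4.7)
My approach follows the general philosophy outlined in the introduction: decompose the full-measure feature (here $\ell(\partial V_{\vec{k}}(X))$) into pieces indexed by the summands of the identity, and verify that the measure of each piece matches the summand.

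First I would set up the boundary parametrization. Let $X \in \M^{\vec{k}}(\Sigma)$ and consider $\partial V_{\vec{k}}(X)$. For each point $p \in \partial V_{\vec{k}}(X)$, launch the geodesic $\gamma_p$ perpendicularly inward. I claim that, up to a set of measure zero, $\gamma_p$ returns to $\partial V_{\vec{k}}(X)$ perpendicularly, producing a well-defined element of $\OO_{\vec{k}}(X)$, the orthogeodesics of the concave core. The set of $p$ for which $\gamma_p$ stays trapped forever inside $V_{\vec{k}}(X)$ projects into the non-wandering part of the geodesic flow on $V_{\vec{k}}(X)$, which by the usual recurrence/ergodicity arguments has measure zero in $\partial V_{\vec{k}}(X)$. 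This yields a (full-measure defined) map $\Phi: \partial V_{\vec{k}}(X) \to \OO_{\vec{k}}(X)$, and so
\[
\ell(\partial V_{\vec{k}}(X)) = \sum_{\mu \in \OO_{\vec{k}}(X)} \ell\bigl(\Phi^{-1}(\mu)\bigr).
\]

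Next I would pass from all of $\OO_{\vec{k}}(X)$ to the prime index set $\OO'_{\vec{k}}(X)$ using the immersed pair-of-pants structure. By definition, each $\mu \in \OO_{\vec{k}}(X)$ is contained in a (possibly trivial) chain of immersed pants $P \in \PP_{\vec{k}}(X)$; declaring $\mu \sim \mu'$ iff they lie in a common element of $\PP_{\vec{k}}(X)$ (with maximal ambient pair of pants) gives an equivalence relation whose classes are represented uniquely by $\vec{k}$-prime orthogeodesics. Combining each equivalence class under a single prime representative, the decomposition above rewrites as
\[
\ell(\partial V_{\vec{k}}(X)) = \sum_{\mu \in \OO'_{\vec{k}}(X)} \sum_{\mu' \sim \mu} \ell\bigl(\Phi^{-1}(\mu')\bigr).
\]

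The main computational step is then to identify the inner sum with $\bigmeasure(\mu)$. For a prime orthogeodesic $\mu$ joining boundary components $\alpha$ and $\beta$ with grades $k_\alpha, k_\beta$, the non-prime orthogeodesics $\mu' \sim \mu$ are obtained by concatenating $\mu$ with controlled wrappings around $\alpha$ and $\beta$ inside the immersed pants; these are parametrized by a pair of integer (or shift) labels bounded by $k_\alpha$ and $k_\beta$. I would lift to the universal cover $\Hyp$, fix a perpendicular geodesic over $\mu$, and compute the lengths $\ell(\Phi^{-1}(\mu'))$ explicitly in terms of $\ell(\mu)$, $\ell(\alpha)$, $\ell(\beta)$ and the wrapping data, using standard right-angled hexagon identities. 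Summing over all admissible wrappings should give a closed form, which I would then take as the definition (or verification) of $\bigmeasure(\mu)$.

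The main obstacle I anticipate is the measure-zero claim for the non-returning set once grades become infinite (cusps or cone points), because then parts of $\partial V_{\vec{k}}(X)$ abut the thin regions where the dynamics degenerate; handling this uniformly across the moduli space $\M^{\vec{k}}(\Sigma)$, and verifying that the equivalence classes under $\sim$ genuinely exhaust $\OO_{\vec{k}}(X)$ with no overlap (so that the double sum rearrangement is legitimate), is the delicate bookkeeping that makes the identity work at the boundary of the allowed parameter region.
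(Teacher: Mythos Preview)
Your approach diverges from the paper's in a key way and contains a real gap. The paper does not pass through a Basmajian-style first-return map and then regroup; instead, for each $\vec{k}$-prime $\mu$ it directly assigns a \emph{single} interval $[x_\mu, y_\mu] \subset \alpha$, McShane-style: working in the pre-immersed pair of pants $\tilde{\tilde{P}}_\mu$ (with cuffs $\tilde{\tilde{\alpha}}, \tilde{\tilde{\beta}}, \tilde{\tilde{\gamma}}_\mu$), the gap consists of those basepoints whose orthoray stays inside $Q_\mu$, equivalently does not first cross $\tilde{\tilde{\gamma}}_\mu$ along a simple arc. The measure $\bigmeasure(\mu)$ is then read off in one shot from a right-angled hexagon and an ideal quadrilateral in $\tilde{\tilde{P}}_\mu$ --- no summation over sub-orthogeodesics is needed. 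The substantive work is showing these gaps are pairwise disjoint (Claim~2 of Section~\ref{sec:ergodicity}, where overlapping gaps would force $Q_\mu \subset Q_\eta$ and hence violate primality) and that their complement has measure zero (the direct Liouville-volume argument of Section~\ref{sec:ergodicity}).

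Your regrouping route could in principle reproduce the same intervals, but your description of the equivalence classes is wrong in a way that would derail the computation. You assert that the $\mu' \sim \mu$ are ``parametrized by a pair of integer labels bounded by $k_\alpha$ and $k_\beta$'', obtained by wrapping around $\alpha$ and $\beta$. In fact the class $\{\mu' \in \OO_{\vec{k}}(X) : \mu' \leq_{\vec{k}} \mu\}$ is infinite: it corresponds to orthogeodesics of $\tilde{\tilde{P}}_\mu$ from $\tilde{\tilde{\alpha}}$ to $\tilde{\tilde{\alpha}} \cup \tilde{\tilde{\beta}}$ lying in the $\vec{1}$-concave core of that pants, and these can wind around the third cuff $\tilde{\tilde{\gamma}}_\mu$ arbitrarily many times (wrapping around $\tilde{\tilde{\alpha}}$ or $\tilde{\tilde{\beta}}$, by contrast, would push $\mu'$ out of $\OO_{\vec{k}}(X)$ altogether). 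So your inner sum is an infinite series, and evaluating it in closed form would amount to proving a full Basmajian-type identity inside each $\tilde{\tilde{P}}_\mu$ --- much more than the paper's single trigonometric step. A smaller but related slip: the orthoray does not ``return to $\partial V_{\vec{k}}(X)$ perpendicularly'', since that boundary is an equidistant curve (or horocycle), not a geodesic; one must take the homotopy class of the arc up to first exit and pass to its orthogeodesic representative to land in $\OO_{\vec{k}}(X)$.
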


The set $\OO'_{\vec{k}}(X)$ is the set of $\vec{k}$-prime orthogeodesics and the measure $\lambda_{\vec{k}}$ associated to the orthogeodesic $\mu$ depends explicitly on the geometry of the associated immersed pair of pants. This will be detailed, and we will give explicit values later in the introduction, but first we concentrate on the set of hyperbolic metrics $\M^{\vec{k}}(\Sigma)$. $\Sigma$ is a topological surface of finite type, thus of genus $g$ with $n$ numbered marked points. The set $\M^{\vec{k}}(\Sigma)$ is a moduli space of hyperbolic metrics that we call admissible: this is the requirement that, for $i=1,\hdots,n$, the $i$-th boundary element be realized as either a cone point of angle $\leq \frac{\pi}{k_i}$, or a cusp or a simple closed geodesic. Note that if a grade is infinite, as usual, a cone point of angle $0$ is the same thing as a geodesic of length $0$, which is simply a cusp. Finally, observe that a surface $X$ is in $\M^{\vec{k}}(\Sigma)$ if and only if $V_{\vec{k}}(X)$ is defined.

Certain metrics in $\M^{\vec{k}}(\Sigma)$ are special: these are the limit metrics where all boundaries are cone points of angle $\frac{\pi}{k_i}$ for all $i=1,\hdots,n$. The resulting metrics are orbifold surfaces, and we call them {\it model surfaces}, denote them $M_{\vec{k}}$, and they play an important role in our approach. 
Note that all boundary elements of $V_{\vec{k}}(M_{\vec{k}})$ are exactly of length $0$, so the identity for any model surface is void of content, or said otherwise, our identities hold and have content for all surfaces up until these special limit cases. They also play another important role, as their geodesics help us understand the notion of $\vec{k}$-primality. 

\begin{theorem}
An orthogeodesic $\mu \in \OO(X)$ is in $\OO'_{\vec{k}}(X)$ if and only if it corresponds to a properly immersed geodesic path on any model surface $M_{\vec{k}}$.
\end{theorem}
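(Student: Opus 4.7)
The plan is to make precise the correspondence between orthogeodesics $\mu \in \OO(X)$ and geodesic arcs on $M_{\vec{k}}$, then to identify the $\vec{k}$-prime condition with proper immersion on the model. Since $\OO_{\vec{k}}(X)$ is defined purely topologically (by the wrapping restrictions), as $X$ degenerates through $\M^{\vec{k}}(\Sigma)$ to $M_{\vec{k}}$ each $\mu \in \OO_{\vec{k}}(X)$ deforms continuously to a geodesic arc $\bar\mu$ on $M_{\vec{k}}$ whose endpoints lie at the cone points replacing the boundary components of $X$. I would first verify that this assignment is well-defined and canonical, and observe that orthogeodesics with a wrap of $k_i$ or more around some boundary do not produce proper geodesic arcs in the limit at all, since the excess wraps collapse onto a cone point.

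The crucial geometric dictionary to establish is the relationship between wrapping on $X$ and passing through cone points on $M_{\vec{k}}$. A cone point of angle $\pi/k_i = 2\pi/(2k_i)$ is a $\Z/(2k_i)$-orbifold singularity, and a geodesic meeting it in its interior lifts, in the local $2k_i$-fold branched cover, to a smooth geodesic through the branch point. On $X$, this pass-through behavior is precisely the limit of wrapping exactly $k_i$ times around the $i$-th boundary as its length tends to zero. Consequently, for $\mu \in \OO_{\vec{k}}(X)$, the arc $\bar\mu$ hits an interior cone point $c_i$ if and only if $\mu$ admits a decomposition on $X$ as a concatenation of two orthogeodesic sub-arcs joined by $k_i$ wraps around the $i$-th boundary.

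With this dictionary in hand I would prove both directions by contrapositive. For the forward implication, suppose $\bar\mu$ hits an interior cone point $c_i$; the resulting decomposition of $\mu$ produces a sub-orthogeodesic whose associated immersed pair of pants, using the grading freedom available at $c_i$, properly contains the pair of pants of $\mu$, so $\mu$ is not $\vec{k}$-prime. Conversely, if $\mu$ is not $\vec{k}$-prime, then some strict inclusion of pairs of pants in $\PP_{\vec{k}}(X)$ arises from extending $\mu$ through a boundary component by an admissible wrapping, and under the dictionary this extension places a cone point of $M_{\vec{k}}$ in the interior of $\bar\mu$.

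The hard part will be making rigorous the correspondence between the containment poset on $\PP_{\vec{k}}(X)$ and the sub-arc relation among geodesics on $M_{\vec{k}}$: one must analyze how the immersed pair of pants whose third boundary is a word of the form $\alpha^{k_\alpha} \mu \beta^{k_\beta} \mu^{-1}$ degenerates as the boundary geodesics collapse to cone points of the extremal angle, and verify that maximality in the poset is preserved in the limit. This should reduce to a careful local analysis of orthogeodesic behavior in the natural collars $V_{\vec{k}}(X)$ when boundary lengths are small, combined with the local orbifold model at each cone point; the universal cover picture in which the $i$-th cone point pulls back to an isolated fixed point of a $2k_i$-fold rotation is the technical engine that should make the pass-through calculation sharp.
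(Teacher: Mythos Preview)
Your outline captures the right target --- relate the containment poset on $\PP_{\vec{k}}(X)$ to the sub-arc/concatenation relation on geodesic arcs of $M_{\vec{k}}$ --- but you are missing the paper's key simplification, and without it your ``hard part'' stays hard.

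The paper does not analyze the degeneration of $Q_\mu$ directly on $M_{\vec{k}}$, where the two cone angles are $\pi/k_\alpha$ and $\pi/k_\beta$ and the third boundary is the complicated word $\alpha^{k_\alpha}\mu\beta^{k_\beta}\mu^{-1}$. Instead it passes to the \emph{pre-immersed} pair of pants $\tilde{\tilde{P}}_\mu$ via the double immersion $\varphi\circ\psi$. By construction $\tilde{\tilde{P}}_\mu$ has boundary components $\tilde{\tilde{\alpha}}, \tilde{\tilde{\beta}}$ which are $k_\alpha$- and $k_\beta$-fold covers of $\alpha,\beta$; on the model surface these therefore become cone points of angle exactly $k_\alpha\cdot(\pi/k_\alpha)=\pi$ and $k_\beta\cdot(\pi/k_\beta)=\pi$. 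A pair of pants with two $\pi$-cone-points collapses to a single segment (its simple orthogeodesic), and this is elementary. Pushing this collapse forward by $\varphi\circ\psi$ gives $Q_\mu=\bar\mu$ on $M_{\vec{k}}$ immediately, and the poset relation $Q_\eta\subset Q_\mu$ becomes ``$\bar\eta$ is a concatenation of copies of $\bar\mu$'', from which both directions follow at once. Your proposed route through local orbifold covers and $2k_i$-fold rotations would eventually reach the same conclusion, but the pre-immersion trick bypasses all of that analysis.

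There is also a genuine imprecision in your dictionary. You write that $\bar\mu$ hits an interior cone point $c_i$ if and only if $\mu$ decomposes on $X$ as two orthogeodesic sub-arcs joined by $k_i$ wraps. But the non-prime $\eta\in\OO_{\vec{k}}(X)$ you need to handle are exactly those that wrap around the \emph{third} boundary $\gamma_{\mu'}$ of some prime $Q_{\mu'}$; their geodesic representatives on $X$ need not have any explicit $k_i$-wrap around a boundary of $X$ (indeed they lie in the concave core), yet their homotopy class does contain the word $\alpha^{k_\alpha}$ via $\gamma_{\mu'}=\alpha^{k_\alpha}\mu'\beta^{k_\beta}(\mu')^{-1}$, which is what forces $\bar\eta$ through the cone point on the model. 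So the dictionary must be stated at the level of homotopy classes, not geodesic sub-loops, and even then your forward step (producing a $\mu'$ with $Q_\mu\subsetneq Q_{\mu'}$ from a decomposition at $c_i$) needs care to ensure $\mu'$ has the same initial and terminal boundaries as $\mu$; this is automatic once you know $Q_\mu$ collapses to a segment, but is not obvious from your local decomposition picture alone.
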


Properly immersed just means that the unique geodesic minimizer of the path has only its endpoints in a cone point. The above theorem is really our trick to understand the index set in a geometric fashion, as opposed to trying to argue topologically whether certain immersed pairs of pants satisfy inclusion properties. 

To prove the above identity requires understanding the index set, but also requires understanding the measures $\bigmeasure(\mu)$, which are intervals of the boundary of $V_{\vec{k}}(X)$, and in particular why they are disjoint and why their complement is measure $0$.The measure $0$ part uses some kind of ergodicity property, which becomes trickier when the surface has cone points. We provide a self-contained proof which doesn't require using a previously known ergodicity type theorem. Although one could probably appeal to known results, our approach is tailored for the problem at hand so as to provide additional insight into why the identity really works (see Section \ref{sec:ergodicity}).

The next part is computing the measures explicitly. The intervals we use are similar to certain intervals used in McShane type identities, only here we associate them to {\it any} orthogeodesic. As we have a dual interpretation (in terms of orthogeodesics and immersed pairs of pants) this leads to different expressions. It turns out that one nice way of expressing the identity is in terms of half-traces (of the corresponding elements in $\PSL_2(\R)$). Thus the half-trace of a cone point of angle $\theta$ is $\cos(\theta/2)$ and the half-trace of a geodesic of length $\ell$ is $\cosh(\ell/2)$. 

Here is one of the expressions of the measures in terms of half-traces. 
\begin{theorem}
The measure associated to an orthogeodesic $\mu$ that leaves from a geodesic $\alpha$ and goes to a boundary element $\beta$ is 
$$
\bigmeasure(\mu)= \frac{2 T_{k_\alpha}(a)}{\sqrt{T_{k_\alpha}(a)^2-1}}
\arccosh\left(\frac{T_{k_\alpha}(a)^2(c-\sqrt{c^2-1})+T_{k_\alpha}(a)T_{k_\beta}(b) + \sqrt{c^2-1}}{\sqrt{p(T_{k_\alpha}(a),T_{k_\beta}(b),c)}}\right).
$$
where $c$ is the half-trace of the boundary of the immersed pair of pants associated to $\mu$, $a$ is the half-trace of $\alpha$, $b$ is the half-trace of $\beta$, $T_i$ is the Chebyschev polynomial of the first kind of degree $i$ and $p$ is the polynomial 
$$
p(x,y,z) = x^2 + y^2 +z^2 - 2xyz +1.
$$
\end{theorem}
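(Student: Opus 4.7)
The plan is to identify $\bigmeasure(\mu)$ as a specific subarc of $\partial V_{\vec{k}}(X)$ arising in the proof of Theorem~\ref{thm:abstract}, lift the picture to the associated immersed pair of pants, and then carry out the computation by standard pair-of-pants hyperbolic trigonometry, with all quantities expressed via half-traces.

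First I would recall, from the construction used to establish Theorem~\ref{thm:abstract}, that $\bigmeasure(\mu)$ is the length of a specific arc on the boundary component of the natural collar around $\alpha$, whose endpoints are determined by the geometry of the immersed pair of pants $P_\mu$ associated with $\mu$. I would pass to the universal cover: the immersed pair of pants lifts to an honest pair of pants $\widetilde{P}_\mu$ whose three cuffs correspond to $\alpha^{k_\alpha}$, $\beta^{k_\beta}$, and the third boundary curve (of half-trace $c$). The half-traces of the first two cuffs are precisely $T_{k_\alpha}(a)$ and $T_{k_\beta}(b)$, by the Chebyshev identities $T_n(\cosh x)=\cosh(nx)$ and $T_n(\cos\theta)=\cos(n\theta)$, and this is why the formula is expressed uniformly via Chebyshev polynomials in the three admissible regimes (simple closed geodesic, cusp, cone point).

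Second, I would reduce to computing a length in $\widetilde{P}_\mu$. The orthogeodesic $\mu$ is realized inside $\widetilde{P}_\mu$ as the common perpendicular between the $\alpha^{k_\alpha}$-cuff and the $\beta^{k_\beta}$-cuff, while the boundary of the natural collar of $\alpha$ lifts to an equidistant curve (or a horocycle in the cusp case) around the $\alpha^{k_\alpha}$-cuff. The subarc realizing $\bigmeasure(\mu)$ is bounded by two distinguished points: the foot of a common perpendicular in $\widetilde{P}_\mu$, and the point where an asymptotic geodesic to the $c$-cuff meets the equidistant curve. The prefactor $\frac{2T_{k_\alpha}(a)}{\sqrt{T_{k_\alpha}(a)^2-1}}=2\coth\!\bigl(k_\alpha\ell_\alpha/2\bigr)$ is precisely the scale factor converting an arc length measured on the $\alpha^{k_\alpha}$-cuff itself to the arc length on the equidistant curve (natural collar boundary), and the factor of $2$ records that a single subarc on the natural collar corresponds to two arcs in $\widetilde{P}_\mu$ under the unwrapping.

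Third, I would carry out the explicit trigonometric computation. Setting $A=T_{k_\alpha}(a)$, $B=T_{k_\beta}(b)$, $C=c$, one uses the right-angled hexagon decomposition of $\widetilde{P}_\mu$ cut along its three common perpendiculars, together with the standard identity
\[
\cosh d_{AB}=\frac{AB+C}{\sqrt{(A^2-1)(B^2-1)}},
\]
and analogous formulas for the other two perpendiculars, to locate the two endpoints of the arc in Fermi coordinates based at the $\alpha^{k_\alpha}$-cuff. After suitable normalization of $\widetilde{P}_\mu$ in the upper half-plane (placing the $\alpha^{k_\alpha}$-axis as the imaginary axis), the $\arccosh$ expression drops out directly; the polynomial $p(A,B,C)=A^2+B^2+C^2-2ABC+1$ appears naturally as the quantity that equals $\sinh^2(d)\cdot (\text{appropriate trig factor})$ via the Fricke identity controlling the half-trace of the third cuff in terms of the two generating cuffs of $\widetilde{P}_\mu$. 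The combination $C-\sqrt{C^2-1}$ inside the $\arccosh$ is characteristic of a translation parameter of the $C$-cuff along the $A$-axis, consistent with this setup.

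The main obstacle is bookkeeping: correctly identifying which two perpendiculars (or asymptotic geodesics) bound the arc that realizes $\bigmeasure(\mu)$, and then massaging the resulting trigonometric expression into the specific form announced. A uniform treatment across the three types of boundary elements ($\alpha$ a closed geodesic, cusp, or cone point) should follow from the fact that the formula is algebraic in $A,B,C$: once proved in one case, analytic continuation in $a$ and $b$ (replacing $\arccosh$ by $\arccos$ and $\sqrt{A^2-1}$ by $i\sin$ when $A=\cos(k_\alpha\theta_\alpha/2)$, with the two imaginary factors in the prefactor cancelling) extends it to the others. Once the endpoint identification is pinned down, the computation is a guided, if somewhat intricate, application of the hyperbolic trigonometry of right-angled hexagons.
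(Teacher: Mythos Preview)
Your approach is essentially the paper's: pass to the pre-immersed pair of pants (the paper's $\tilde{\tilde{P}}_\mu$, with cuffs of half-trace $A=T_{k_\alpha}(a)$, $B=T_{k_\beta}(b)$, $C=c$), use the right-angled hexagon together with an ideal right-angled quadrilateral to compute the gap length $\measure(\mu)$ on the cuff, and then rescale by $\coth(k_\alpha\alpha/2)=A/\sqrt{A^2-1}$ to transfer to $\partial V_{\vec{k}}(X)$. One small correction to your bookkeeping: the factor of $2$ is not an ``unwrapping'' artifact but simply the symmetry of the gap about the foot of $\mu$ --- the hexagon/quadrilateral computation naturally yields $\cosh(\measure(\mu)/2)$, and doubling recovers the full gap.
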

There are similar expressions in all of the different geometric situations (see Section \ref{sec:measures}). They are, in some sense, all the same and in particular provide a continuous family of measure associated to an orthogeodesic leaving from $\alpha$ as you transform $\alpha$ from a geodesic, to a cusp and then to a cone point.

When the surface has geodesic boundary, as the grades grow, the measures converge to the measure in the orthospectrum identity of Basmajian. In fact we show:
 
\begin{theorem}
Let $X$ be a hyperbolic surface with geodesic boundary. 
The identity associated to the grade $\vec{k}$ converges to the orthospectrum identity, as 
$\vec{k} \rightarrow (\infty,...,\infty)$. 
\end{theorem}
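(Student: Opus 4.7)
The strategy is to verify that both sides of the identity in Theorem~\ref{thm:abstract} converge termwise to the corresponding sides of Basmajian's identity as $\vec{k}\to(\infty,\dots,\infty)$. Throughout, $X$ has only simple closed geodesic boundaries, so each half-trace $a_i = \cosh(\ell(\beta_i)/2) > 1$ and $T_{k_i}(a_i) = \cosh(k_i \ell(\beta_i)/2) \to \infty$.

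\textbf{Step 1: Exhaustion of the index set.} I would first show that every fixed $\mu \in \OO(X)$ belongs to $\OO'_{\vec{k}}(X)$ for all sufficiently large $\vec{k}$. Since $\mu$ is a compact arc, it wraps only finitely many times around each boundary component, so $\mu \in \OO_{\vec{k}}(X)$ once each $k_i$ exceeds these wrapping numbers. For $\vec{k}$-primality, I would use the characterization via model surfaces: as $\vec{k}$ grows, the cone angles $\pi/k_i$ on $M_{\vec{k}}$ shrink, and the geodesic representative of the arc class of $\mu$ becomes properly immersed (its interior stays away from the cone points). Hence $\mu$ becomes $\vec{k}$-prime.

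\textbf{Step 2: Convergence of the boundary of the concave core.} The natural collars around a geodesic boundary are defined by the dynamical behavior of geodesics wrapping up to $k_i$ times, and their widths shrink monotonically to zero as $k_i\to\infty$. So $\ell(\partial V_{\vec{k}}(X)) \to \ell(\partial X)$, giving convergence of the right-hand side.

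\textbf{Step 3: Pointwise convergence of the measures.} For fixed $\mu$ of length $\ell$ joining $\alpha$ to $\beta$, write $A=T_{k_\alpha}(a)$, $B=T_{k_\beta}(b)$, and note the half-trace $c$ of the third cuff of the associated immersed pair of pants also depends on $\vec{k}$. Using the standard hexagon identity for a pair of pants (with $\cosh\ell$ expressed in terms of $A,B,c$), one checks that $c \sim AB\,\cosh\ell$ to leading order, and $\sqrt{c^2-1} \sim c$. Substituting these asymptotics into the explicit expression
$$
\bigmeasure(\mu) = \frac{2A}{\sqrt{A^2-1}}\,\arccosh\!\left(\frac{A^2(c-\sqrt{c^2-1}) + AB + \sqrt{c^2-1}}{\sqrt{p(A,B,c)}}\right),
$$
the prefactor tends to $2$, and a direct limit computation (using $\arccosh(x) = \log(x+\sqrt{x^2-1})$ and that $p(A,B,c) = A^2 + B^2 + c^2 - 2ABc + 1$ factors asymptotically as $A^2 B^2 \sinh^2\ell$) produces
$$
\lim_{\vec{k}\to\infty} \bigmeasure(\mu) = 2\log\!\left(\coth\tfrac{\ell(\mu)}{2}\right),
$$
recovering Basmajian's summand.

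\textbf{Step 4: Interchange of limit and sum.} This is the main obstacle. I would establish that the intervals defining $\bigmeasure(\mu)$ are monotonically increasing (or at least eventually increasing) in $\vec{k}$ for fixed $\mu$, reflecting the fact that shrinking natural collars expose more of $\partial X$. By Step 1, each term of the Basmajian sum appears in $\sum_{\mu\in\OO'_{\vec{k}}(X)}\bigmeasure(\mu)$ eventually; by Step 3 each such term converges to Basmajian's summand. Fatou's lemma then gives a lower bound
$$
\sum_{\mu\in\OO(X)} 2\log\!\left(\coth\tfrac{\ell(\mu)}{2}\right) \le \liminf_{\vec{k}\to\infty}\,\sum_{\mu\in\OO'_{\vec{k}}(X)}\bigmeasure(\mu),
$$
while Theorem~\ref{thm:abstract} combined with Step 2 gives
$$
\lim_{\vec{k}\to\infty}\,\sum_{\mu\in\OO'_{\vec{k}}(X)}\bigmeasure(\mu) = \ell(\partial X) = \sum_{\mu\in\OO(X)} 2\log\!\left(\coth\tfrac{\ell(\mu)}{2}\right),
$$
the last equality being Basmajian's identity. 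Consequently the inequality is an equality and the termwise convergence promotes to convergence of the sums, so the identity associated to $\vec{k}$ converges to the orthospectrum identity in the claimed sense. The delicate point throughout is the asymptotic analysis in Step 3, and I expect the main technical work to consist in verifying the monotonicity of the intervals needed to justify Fatou in Step 4.
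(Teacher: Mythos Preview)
Your proposal is correct and follows the same two-ingredient approach as the paper: the exhaustion $\bigcup_{\vec{k}}\OO'_{\vec{k}}(X)=\OO(X)$ (your Step~1, established in their Section~4.5) together with pointwise convergence of the gap measures to Basmajian's (your Step~3). The paper's proof is literally a one-sentence remark citing these two facts; it does not discuss the interchange of limit and sum at all, so your Step~4 is more careful than what actually appears there.

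One simplification worth adopting for Step~3: the paper computes the limit using the gap formula expressed in terms of the \emph{fixed} orthogeodesic length $\ell(\mu)$ rather than in terms of the triple $(A,B,c)$. Since $c$ depends on $\vec{k}$ while $\ell(\mu)$ does not, their route avoids your asymptotic analysis via the hexagon relation entirely: letting $B\to\infty$ in the expression for $M$ written in terms of $A,B,\cosh\ell(\mu)$ yields $M=\coth\ell(\mu)$ directly (and, notably, independently of $A$), hence $\lambda(\mu)=2\log\coth(\ell(\mu)/2)$; the remaining prefactor $\coth(k_\alpha\alpha/2)$ relating $\lambda$ to $\lambda_{\vec{k}}$ tends to $1$ as $k_\alpha\to\infty$. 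Your computation through $c\sim AB\cosh\ell$ reaches the same conclusion but is unnecessarily indirect.
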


The identity takes one of its simplest forms when the boundary elements are all cusps. Here $\gamma_\mu$ is the geometric length of the boundary curve of the immersed pair of pants associated to $\mu$, and this is the "simplest" form of the identity, because all of the grades are set to be $1$ (we use the notation $\vec{1} = (1,\hdots,1)$). 

\begin{theorem}
Let $X$ be a surface of genus $g$ and $n$ cusps with $\chi(X) = 2-2g -n \leq -1$ and $(g,n)\neq (0,3)$. Then 
$$
\sum_{\mu \in \OO'_{\vec{1}}(X)} \frac{2}{e^{\frac{\gamma_\mu}{2}}+1} = n.
$$
\end{theorem}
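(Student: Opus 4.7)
The plan is to obtain the identity as a direct specialization of Theorem~\ref{thm:abstract} with grading $\vec{k} = \vec{1}$, applied to a surface $X$ whose $n$ boundary elements are all cusps. The topological hypotheses $\chi(X) \leq -1$ and $(g,n)\neq (0,3)$ rule out the thrice-punctured sphere, which at grading $\vec{1}$ is a model surface and would give the trivial identity $0 = 0$, and they guarantee that there exist $\vec{1}$-prime orthogeodesics at all. Since a cusp is the $0$-cone-angle limit, the admissibility condition ``cone angle $\leq \pi/k_i = \pi$'' is trivially satisfied for every $k_i = 1$, so $X \in \M^{\vec{1}}(\Sigma)$, the concave core $V_{\vec{1}}(X)$ is defined, and Theorem~\ref{thm:abstract} applies.

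For the right-hand side, I would appeal to the construction of the natural collar from Section~\ref{sec:setup}: at a cusp with grade $1$, the collar is bounded by a horocycle, and the normalization chosen for natural collars in that section is exactly the one for which this horocycle has length~$1$. Hence $\partial V_{\vec{1}}(X)$ consists of $n$ horocycles, each of length $1$, and $\ell(\partial V_{\vec{1}}(X)) = n$, giving the right-hand side of the claimed identity.

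For the summands, each $\vec{1}$-prime orthogeodesic $\mu$ runs between two cusps (possibly the same one), and by Section~\ref{sec:setup} its associated immersed pair of pants has two cuspidal boundaries $\alpha, \beta$ together with a single genuine geodesic boundary whose length I denote $\gamma_\mu$. The general half-trace formula from the introduction degenerates at cusps (the half-traces $a = b = 1$ give $T_1(a)^2 - 1 = 0$), so I would invoke the cuspidal version of the measure formula developed in Section~\ref{sec:measures}, or equivalently take the cuspidal limit of the geodesic-endpoint formula after cancelling the shared $\sqrt{T_{k_\alpha}(a)^2 - 1}$ factor in numerator and denominator. In that limit only the $\gamma_\mu$-dependence survives, and the computation of the horocyclic interval cut out on $\partial V_{\vec{1}}(X)$ by the lifts of $\mu$ collapses to
\[
\bigmeasure(\mu) \;=\; \frac{2}{e^{\gamma_\mu/2}+1}.
\]

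The principal obstacle is this cuspidal evaluation of $\bigmeasure$: the half-trace formula as stated is indeterminate at cusps, so one must either execute a careful limiting argument in the general formula or perform the horocyclic length computation directly in the universal cover, expressing the endpoints of the relevant interval on the boundary horocycle in terms of the length of the geodesic boundary of the immersed pair of pants. Granting this measure computation, substituting $\ell(\partial V_{\vec{1}}(X)) = n$ and $\bigmeasure(\mu) = 2/(e^{\gamma_\mu/2}+1)$ into Theorem~\ref{thm:abstract} yields the stated identity.
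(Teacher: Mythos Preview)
Your approach is exactly the paper's: specialize Theorem~\ref{thm:abstract} at $\vec{k}=\vec{1}$ with all boundaries cusps, read off the boundary length of the concave core on the right and the cusp-to-cusp measure from Section~\ref{sec:measures} on the left. However, you have two compensating numerical slips. By the paper's definition, the first natural collar of a cusp is the horoball with boundary horocycle of length~$2$, not~$1$, so $\ell(\partial V_{\vec{1}}(X)) = 2n$. Correspondingly, the cusp-to-cusp measure computed in Section~\ref{sec:measures} is $\bigmeasure(\mu) = 4/(e^{\gamma_\mu/2}+1)$, not $2/(e^{\gamma_\mu/2}+1)$; dividing both sides of the abstract identity by $2$ then gives the stated form. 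A minor point: your explanation for excluding $(g,n)=(0,3)$ is not quite right. The thrice-punctured sphere is \emph{not} itself a $\vec{1}$-model surface --- three cone points of angle $\pi$ on a sphere cannot carry a hyperbolic (or even Euclidean) metric --- rather, no $\vec{1}$-model surface exists at all in that topological type, and the whole framework of $\vec{1}$-primality breaks down there.
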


A particular case of the above identity is when $(g,n)=(0,4)$, and in this case the above identity is in fact the McShane identity for the four holed sphere. This is one explanation for the striking similarity between the above measures and those that appear in the original McShane identity. However, for all other topological types, including for punctured tori, the identities are very different. Note that although the above identity does not work for pairs of pants, this is due to the low grading: one feature of our identities is that we do get identities for pairs of pants, provided the grading satisfies certain lower bounds. As an example of these, we get the following identity for thrice punctured spheres. 

\begin{theorem}
Let $Y$ be the thrice punctured sphere and let $k\leq l \leq m$ be positive integers so that $ k + l + m >4$. Then 
$$
\sum_{\mu \in \OO'_{(k,l,m)}(Y)} \frac{2}{e^{\frac{\gamma_\mu}{2}}+1} = \frac{1}{k}+\frac{1}{l}+\frac{1}{m}.
$$
\end{theorem}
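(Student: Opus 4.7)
The plan is to apply the abstract identity (Theorem~\ref{thm:abstract}) to $Y$ with grading $\vec{k} = (k,l,m)$ and then evaluate both sides explicitly, using that every boundary element of $Y$ is a cusp. First I would check that $Y \in \M^{\vec{k}}(\Sigma)$, equivalently that $V_{\vec{k}}(Y)$ is defined. For any positive integer $k_i$, a cusp is the degenerate case of a cone point of angle $0 \leq \pi/k_i$, so each boundary element is individually admissible, and the only remaining requirement is that the three natural horocyclic collars around the cusps be pairwise disjoint. A direct horocycle computation on the thrice-punctured sphere (whose hyperbolic area is $2\pi$, with maximal pairwise-tangent cusp horocycles all of length $2$) shows that the three natural collars embed simultaneously exactly when $(k,l,m)$ avoids the triples $(1,1,1)$ and $(1,1,2)$, which is precisely the hypothesis $k+l+m > 4$.

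Next I would compute the right-hand side. By the definition of the natural collar at a cusp, its horocyclic boundary has length $1/k_i$: this is the scaling under which a grade-$1$ cusp produces a horocycle of length $1$ (recovering the preceding cuspidal theorem), and under which $k_i$-fold wrapping of the horocycle yields a curve of length $1$. Summing over the three cusps gives
$$
\ell(\partial V_{\vec{k}}(Y)) = \frac{1}{k} + \frac{1}{l} + \frac{1}{m}.
$$

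The left-hand side measure $\bigmeasure(\mu)$ must then be evaluated for orthogeodesics $\mu$ both of whose endpoints lie on cusp horocycles. The half-traces $a, b$ of the two boundary elements are both equal to $1$, and since $T_n(1) = 1$ for every Chebyshev polynomial of the first kind, the explicit formula obtained previously for $\bigmeasure(\mu)$ degenerates to an indeterminate form $\infty \cdot 0$. Using the expansion $T_n(1+\varepsilon) = 1 + \tfrac{n^2}{2}\varepsilon + O(\varepsilon^2)$ and carrying out the joint limit $a, b \to 1$ causes the grading factors to cancel exactly, leaving
$$
\bigmeasure(\mu) = \frac{2}{e^{\gamma_\mu/2} + 1},
$$
independent of $k,l,m$. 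Combined with Theorem~\ref{thm:abstract} this produces the claimed identity.

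The main obstacle is the admissibility step: one must pin down exactly the threshold at which the three natural horocyclic collars on a thrice-punctured sphere cease to embed disjointly and verify that this threshold matches $k+l+m > 4$. The limit computation for the measure, while requiring care with the cancellation of Chebyshev factors, is essentially the same calculation that underlies the preceding cuspidal theorem, where the grading-independent value $2/(e^{\gamma_\mu/2}+1)$ already appears.
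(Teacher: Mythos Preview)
Your overall strategy—apply the abstract identity to $Y$ with grading $(k,l,m)$, compute the boundary length of the concave core, and take the cusp limit in the measure formula—is exactly how the paper arrives at this statement, which is presented in Section~\ref{sec:identities} as a direct specialization of the general identity.

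There is a harmless normalization slip: by the paper's definition the $k$-th natural collar of a cusp has horocyclic boundary of length $2/k$ (not $1/k$), and the cusp-to-cusp measure computed in Section~\ref{sec:measures} is $4/(e^{\gamma_\mu/2}+1)$. Both sides of your computation are therefore off by the same factor of $2$, so the final identity is unaffected.

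More seriously, your justification of the hypothesis $k+l+m>4$ via collar disjointness does not work. On $Y$ the three open horoballs bounded by horocycles of length $2$ (the first natural collars) are already pairwise disjoint—their boundary horocycles are mutually tangent, but the open regions do not overlap—so the natural collars embed disjointly for \emph{every} positive grading, including $(1,1,1)$ and $(1,1,2)$. (Indeed, your own parenthetical observation that the maximal cusp horocycles on $Y$ have length $2$ and are pairwise tangent says exactly this.) The actual obstruction at low grading is a degeneracy of the immersed pants: with grading $(1,1,1)$, for instance, the simple orthogeodesic $\mu$ between cusps $1$ and $2$ has $\gamma_\mu$ freely homotopic to the third cusp, hence parabolic rather than a closed geodesic; the associated $Q_\mu$ is then the entire surface, the primality/partial-order structure collapses, and the gap decomposition of Section~\ref{sec:ergodicity} fails. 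This is precisely the issue flagged at the start of Section~\ref{sec:prime}, where the pair of pants is excluded from the $\vec{1}$-discussion, and it has nothing to do with whether the collars embed.
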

An advantage of dealing with orthogeodesics, is that they behave "well" under covers, in that, unlike curves, they lift to other orthogeodesics of the same length. By taking a cover of a surface, you get a new identity. In fact, in the above theorem for the thrice punctured sphere, the case where $k=1, l=2$ and $m=2$ is really a particular case of the previous theorem for $(g,n)=(0,4)$, hence another instance of the McShane identity, but for thrice punctured spheres. 

{\bf Organization.}

This article is organized as follows. 

We give some general definitions and a table of the notation we use in Section \ref{sec:basicsetup}. In Section \ref{sec:setup} we introduce the main objects we study, namely orthogeodesics, immersed pants and natural collars of boundary elements. Section \ref{sec:prime} is about the primality of orthogeodesics and their relationship to the geometry of model surfaces. In Section \ref{sec:ergodicity}, we prove the abstract identity, which includes understanding the dynamics of orthogeodesics, but also the topology of immersed pants. In Section \ref{sec:measures}, we compute the measures associated to orthogeodesics, which requires computing them in different geometric situations, and in terms of different types of input. We are then able to state quantified versions of the identities in Section \ref{sec:identities}, and we end the article with some additional observations about the identities including growth aspects of the index sets in Section \ref{sec:finalobservations}.

{\bf Acknowledgments.}

We are very grateful for the support of the Bernoulli Center of the EPFL in Lausanne for hosting us in January 2019. The authors acknowledge support from U.S. National Science Foundation grants DMS 1107452, 1107263, 1107367 ”RNMS: GEometric structures And Representation varieties” (the GEAR Network).

\section{Basic setup and notation} \label{sec:basicsetup}

\subsection{Basic setup} \label{subsec: Basic setup}

Consider a (finite type) oriented topological surface $\Sigma = \Sigma_{g,n}$ of genus $g$ with $n>0$ boundary simple closed curves. Although from a topological point of view, this is the same as considering punctures or marked points, we want to think of the boundary elements as oriented loops. We require that the orientation coincides with a given orientation of the surface: for instance the surface is always to the right of a boundary loop. More generally, unless explicitly stated arcs and curves are considered as oriented objects. 

We are interested in geometries on $\Sigma$, that for the most part will be hyperbolic metrics with boundary elements. The geometric realization of a boundary element may be either a simple closed geodesic, a cusp or a cone-point, thus giving rise to finite area, but possibly geodesically incomplete metrics (sometimes referred to as the convex core of the surface). Along a geodesic boundary, it will sometimes be convenient to think of the metric as extending beyond the boundary by adding infinite funnels in the obvious way.

Boundary elements are numbered from $1$ to $n$, and as mentioned in the introduction, to each boundary element we associate a {\it grade}, which is an element of $\N \cup \{\infty\} = \{1,2,\hdots\}\cup \{\infty\}$. The collection of grades we call a {\it grading} and denote it by $\vec{k}= \{k_1,\hdots,k_n\}$. 

The hyperbolic metrics we will be considering are the following.

\begin{definition} Fix a grading $\vec{k}$. A $\vec{k}$-admissible metric on $\Sigma$ is a hyperbolic metric where boundary elements are either simple closed geodesics, cusps, or cone points of angle $\theta \leq \frac{\pi}{k_i}$ for the boundary element $i$. We denote the space of $\vec{k}$-admissible metrics on $\Sigma$ by ${\mathcal M}^{\vec{k}} (\Sigma)$.\end{definition}

Note that when $k_i=\infty$, an admissible metric may have a cusp or a simple closed geodesic as its $i$-th boundary component. 

\subsection{Notation}

We collect the various notation we use and the first place they appear in Table \ref{Table:notation}. For ease of notation and clarity, we often denote a curve and its length with the same symbol. The context should always be clear. We usually use the symbol $\mu$ for orthogeodesics joining two not necessarily distinct boundary elements
$\alpha$ (initial) and $\beta$ (terminal). 

 \begin{table}[H]
\caption{Definitions and notation}
\begin{center}
 \begin{tabular}{| l | c | c |}
\hline\hline 
 {\bf Definition} & {\bf Section} &{\bf Notation} \\
 \hline
\hline

Grading &\ref{sec:intro} &$\vec{k} = (k_1,\hdots,k_n)$\\
\hline
$\vec{k}$-admissible surfaces & \ref{subsec: Basic setup} &${\mathcal M}^{\vec{k}}(\Sigma)$\\
\hline
Immersed pairs of pants& \ref{subsec: Orthogeodesics and immersed pairs of pants} &$\PP(X)$\\
\hline
Boundary curves of immersed pants&\ref{subsec: Orthogeodesics and immersed pairs of pants} &$\partial\PP(X)$\\
\hline
Oriented orthogeodesics on $X$
&\ref{subsec: Orthogeodesics and immersed pairs of pants} &$\OO(X)$ \\
\hline
Unoriented orthogeodesics on $X$ &\ref{subsec: Orthogeodesics and immersed pairs of pants} & $\overline{\OO}(X)$\\
\hline
 $k$-th natural collar of $\delta$ &
 \ref{subsec: natural collars and concave cores} & $\CC_k(\delta)$ \\
\hline
 Concave core&\ref{subsec: natural collars and concave cores} &$V_{\vec{k}}(X)$ \\
\hline

Orthogeodesics on the concave core&
\ref{subsec: subsets of orthogeodesics} &$\OO_{\vec{k}}(X)$ \\
\hline
$\vec{k}$-prime orthogeodesics&\ref{subsec:Primality in full generality}
 &$\OO'_{\vec{k}}(X)$\\
\hline
$ \vec{k}$-prime pairs of pants&\ref{subsec:Primality in full generality}
 & $\PP'_{\vec{k}}(X)$\\
\hline

$\vec{k}$-model surface&\ref{subsec: Model surfaces} & $M_{\vec{k}}$ \\
\hline

Measure on $\partial{X}$&\ref{subsec:Computations} &$\lambda$ \\
\hline
Measure on $\partial{V_{\vec{k}}}$ & \ref{subsec:Computations} & $\lambda_{\vec{k}}$ \\
\hline

\end{tabular} 
\end{center}
\label{Table:notation}
\end{table}


\section{Orthogeodesics, natural collars and concave cores}\label{sec:setup}

\subsection{Orthogeodesics and immersed pairs of pants}
\label{subsec: Orthogeodesics and immersed pairs of pants}
The index sets are completely crucial in our investigation and for this reason we present several ways of defining them and show why the definitions are equivalent.

An orthogeodesic of a hyperbolic surface $X$ is an immersed geodesic segment orthogonal in both endpoints to the boundary $\partial X$. We will be using both oriented and non-oriented orthogeodesics. Note that if one or both of the boundary elements it joins is a cusp, then it is of infinite length. If one of its ends is a cone-point, resp. a cusp, saying that it is orthogonal to the boundary might seem a bit odd, but observe that it is orthogonal to circles around the cone point, resp. horocycles around a cusp. For $X$ a hyperbolic surface, we denote by $\OO(X)$ the set of oriented orthogeodesics. Although we will use them less, we set $\overline{\OO}(X)$ to be the set of unoriented orthogeodesics. 

To each orthogeodesic $\mu \in \OO(X)$, there is a particular associated closed geodesic $\gamma_\mu$ defined as follows. The orthogeodesic $\mu$ goes between two boundary elements $\alpha$ and $\beta$ which we think of as oriented simple closed curves. (Note that as we are defining a homotopy class, this can all be done on $\Sigma$ as well.) The homotopy class $\alpha * \mu * \beta * \mu^{-1}$ of a closed curve corresponds on $X$ to a unique oriented and primitive closed geodesic which we denote $\gamma_\mu$. Observe that $\gamma_\mu$ is the boundary of an immersed geodesic pair of pants $P_\mu$ where the other two boundary elements are $\alpha$ and $\beta$. An example is illustrated in Figure \ref{fig:immersedpants}.

\begin{figure}[h]
\leavevmode \SetLabels
\endSetLabels
\begin{center}
\AffixLabels{\centerline{\includegraphics[width=5cm]{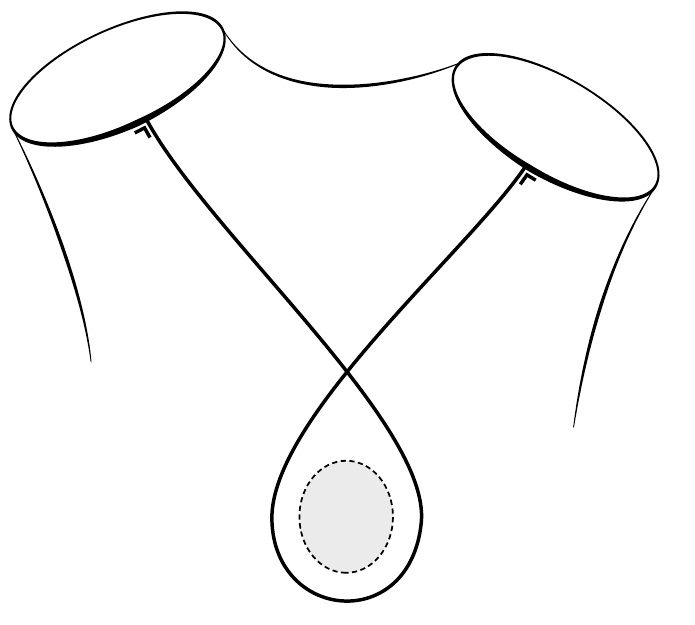}\hspace{50pt}\includegraphics[width=5cm]{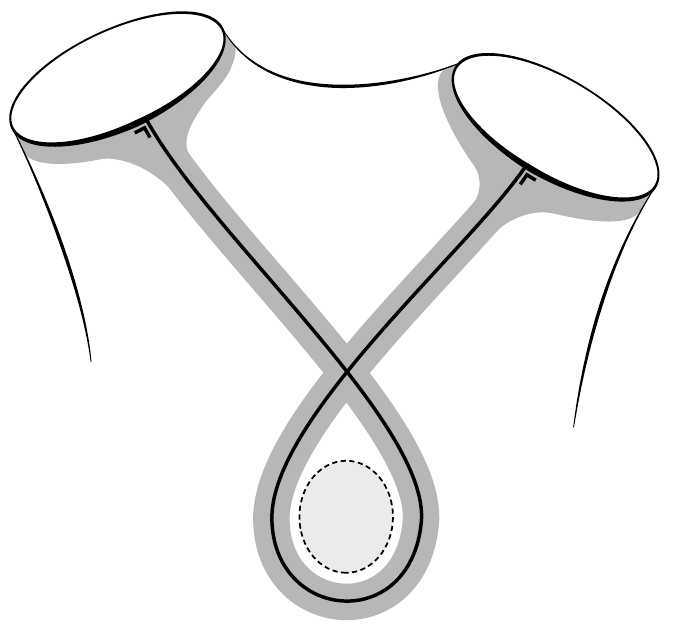}}}
\vspace{-24pt}
\end{center}
\caption{An orthogeodesic and its associated immersed pants in grey} \label{fig:immersedpants}
\end{figure}

$\PP(X)$ will denote the set of immersed pants $P_\mu$ and $\partial \PP(X)$ will denote 
the set of closed curves $\gamma_\mu$ as defined above.

Also observe that both an oriented orthogeodesic $\mu$ and the orthogeodesic with the opposite orientation $\mu^{-1}$ define the same curves: that is
$$
\gamma_{\mu}= \gamma_{\mu^{-1}}.
$$
Hence there is a one-to-one correspondence between {\it immersed pants} $\PP(X)$ (or their boundary curves $\partial\PP(X)$) and {\it unoriented orthogeodesics} $\overline{\OO}(X)$ but a one-to-two correspondence
$$\gamma_\mu \in \partial\PP(X) \leftrightarrow \mu\in \OO(X)$$
between boundary curves of {\it immersed pants} and {\it oriented orthogeodesics}. 

These correspondences will be crucial in the sequel.

\subsection{Natural collars and concave cores}
\label{subsec: natural collars and concave cores}

The geometry of a surface around boundary simple closed geodesics, cone points and cusps is quite simple as they locally look like geodesic cylinders in the collar regions around them. There are a number of ways to choose collar regions. Here we define a family of such regions in terms of the behavior of geodesics. 

The first element of our family is the ``usual" collar which appears naturally in many contexts.

\begin{definition} The {\it first natural collar} of a boundary component is
\begin{itemize} \item the open horoball with boundary length
$2$ around a cusp boundary element,
\item the set of points at distance strictly less than $\arcsinh\left( \frac{1}{\sin\left(\frac{\theta}{2}\right)}\right)$ from a boundary cone point of angle $\theta$,
\item the set of points at distance strictly less than $\arcsinh\left( \frac{1}{\sinh\left(\frac{\ell}{2}\right)}\right)$ if the boundary element is a simple geodesic of length $\ell$.
\end{itemize}\end{definition}

Note that above definition requires that $\theta \leq \pi$ and if $\theta=\pi$, then the natural collar is empty. The regions above are always embedded in the surface and for different boundary elements, are pairwise disjoint. The reason we use the adjective "natural" is due to the following proposition, which is well known to experts.

\begin{proposition}\label{prop:natural}
A complete geodesic path enters the first natural collar if and only if it forms a loop around the boundary component.
\end{proposition}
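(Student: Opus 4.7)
The plan is to verify the proposition case by case, separately for each of the three possible types of boundary element: cusp, simple closed geodesic, and cone point. In every case the strategy is the same: I pass to the cyclic cover of a neighborhood of the boundary component, where the stabilizer is an infinite cyclic group $\langle g \rangle \subset \PSL_2(\R)$ generated respectively by a parabolic, hyperbolic, or elliptic isometry of $\Hyp$. The natural collar lifts to a standard $g$-invariant region (a horoball, an equidistant wedge, or a hyperbolic disc), and a complete geodesic lifts to a Euclidean semicircle or to a degenerate limit configuration. The phrase ``forms a loop around the boundary component'' is interpreted uniformly as saying that the image of the geodesic in the cyclic quotient $\Hyp/\langle g\rangle$ has nontrivial winding around the generator, and the three cases then reduce to direct trigonometric comparisons of thresholds.

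For a cusp, I would normalize so that the lift is at $\infty$ in the upper half-plane model with $g(z) = z + P$, which places the horoball of boundary length $2$ at $\{y \geq P/2\}$. A lifted geodesic is then either a semicircle of Euclidean radius $r$ with endpoints $a < b$ on the real axis, or a vertical line (a geodesic ray into the cusp). The semicircle reaches maximum height $r = (b-a)/2$, so it enters the horoball if and only if $b - a \geq P$, which is exactly the condition that the horizontal extent of the semicircle exceeds one period of $g$, equivalently that the projection of the geodesic to the cusp cylinder $\Hyp/\langle g\rangle$ wraps at least once around the cusp.

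For a simple closed geodesic of length $\ell$, I would lift $\delta$ to the imaginary axis with $g(z) = e^{\ell} z$, so that the natural collar lifts to the wedge $|x| \leq y \sinh w$ with $\sinh w = 1/\sinh(\ell/2)$. A direct calculation in the upper half-plane gives the minimum hyperbolic distance from a lifted semicircle with endpoints $0 < a < b$ to the imaginary axis as $\arcsinh(2\sqrt{ab}/(b-a))$; after rearrangement, the geodesic enters the collar exactly when $b/a \geq e^{\ell}$. In polar coordinates $z = \rho e^{i\phi}$ the quantity $\log \rho$ is monotone along the semicircle and decreases by exactly $\log(b/a)$, so the projection to the annular cover $\Hyp/\langle g\rangle$ winds around $\delta$ exactly $\log(b/a)/\ell$ times, which matches the collar-entry threshold. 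The remaining sign cases $a < 0 < b$ and the vertical or spiralling limit configurations are handled by the same picture.

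For a cone point of angle $\theta$, I would place the lift at the origin of the Poincar\'e disc model with $g(z) = e^{i\theta} z$, so that the natural collar lifts to a hyperbolic disc of radius $w$ around the origin. A lifted geodesic at minimum distance $d$ from the origin subtends at the origin a total angular sweep of $2\Pi(d)$, where $\Pi(d)$ is the hyperbolic angle of parallelism defined by $\sin \Pi(d) = 1/\cosh d$; in the cone cover $\Hyp/\langle g\rangle$ this sweep translates into a total winding of $2\Pi(d)/\theta$ around the cone point, and elementary trigonometric manipulations relate the condition for this winding to reach one to the condition $d < w$ defining the natural collar. I expect the main obstacle to be only in phrasing ``forms a loop around the boundary component'' uniformly across the three qualitatively different regimes; once this is read as a winding condition around the generator of the local cyclic deck group, the rest of the proof is a routine exercise in hyperbolic trigonometry.
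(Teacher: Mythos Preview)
Your approach differs from the paper's. The paper (deferring to the proof of the more general Proposition~\ref{prop:naturalk}) argues intrinsically: for one direction it computes the distance from a geodesic loop around $\delta$ to $\delta$ via a quadrilateral identity; for the converse it exponentiates forward and backward from the point of $\gamma$ closest to $\delta$ and shows the two rays must meet. You instead work in explicit half-plane and disc models and reduce both directions to the single criterion that a lifted geodesic meets its $g$-translate. The thresholds you compute ($b-a>P$ for a cusp, $b/a>e^{\ell}$ for a geodesic, the angle-of-parallelism comparison for a cone point) are all correct, and for a surface with no cone points this is a clean alternative proof.

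The gap is that the intersection of the lifted geodesic with its $g$-translate typically lies \emph{outside} the lifted collar (for a geodesic that only barely enters the collar, the intersection point is close to the ideal boundary, far from $\delta$). Your argument therefore needs the whole annular cover to be isometric to $\Hyp/\langle g\rangle$, not just the collar neighbourhood you start from. When $X$ is a genuine quotient $\Hyp/\Gamma$ this is automatic, since the annular cover is literally $\Hyp/\langle g\rangle$. But the proposition is stated for cone surfaces, and when $X$ carries cone points away from $\delta$ the annular cover inherits them and is no longer $\Hyp/\langle g\rangle$; a complete geodesic on $X$ then does not lift to a single Euclidean semicircle, and your computation no longer applies. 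Your case split (cusp/geodesic/cone point) concerns only the type of $\delta$ itself and says nothing about cone points elsewhere on $X$. The paper spends the second half of its proof on exactly this issue: working in the ``bumpy'' universal cover $\tilde X$, it shows that the two exponential rays from the closest point must intersect and bound a genuine hyperbolic pentagon containing no cone point, using crucially that all cone angles are at most $\pi$. This step is missing from your proposal.

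A smaller imprecision: for $\delta$ a cone point of a general angle $\theta$, the local model is a glued wedge of angle $\theta$, not $\Hyp/\langle g\rangle$ (the latter is only a cone when $g$ has finite order, i.e.\ $\theta\in 2\pi\mathbb{Q}$). Your angle-of-parallelism computation survives once rephrased via the developing map rather than a literal quotient.
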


In particular, a geodesic loop which turns at least one time around the boundary component enters the first natural collar. We prove a more general proposition in the next section. This proposition suggests a natural generalization of the notion of natural collar (hence the use of the word "first" before). A geodesic that enters the first natural collar either hits the boundary element or wraps around it at least once. For a positive integer $k$, we want to define the $k$-th natural collar as the geometric region in which geodesics either hit the boundary element or wrap around it at least $k$ times. 

\begin{definition} The {\it $k$-th natural collar} $\CC_k(\delta)$ of a boundary component $\delta$ is
\begin{itemize} \item an open horoball of boundary length
$\frac{2}{k}$ around if $\delta$ is a cusp,
\item the set of points at distance strictly less than $\arccosh\left( \frac{1}{\sin\left(\frac{k \theta}{2}\right)}\right)$ if $\delta$ is a cone point of angle $\theta$,
\item the set of points at distance strictly less than $ \arcsinh\left( \frac{1}{\sinh\left(\frac{k \ell}{2}\right)}\right)$ if $\delta$ is a simple geodesic of length $\ell$.
\end{itemize}\end{definition}

\begin{figure}[htbp]
\begin{center}
\includegraphics[width=9cm]{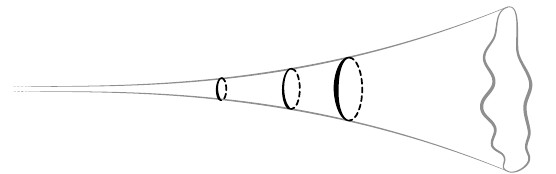}
\caption{The natural collars of a cusp}
\label{fig:naturalcollars}
\end{center}
\end{figure}

Note that the $k$-th natural collar makes sense for all $k$ for boundary geodesics or cusps but only makes sense for cone points of angle less than or equal to $\frac{\pi}{k}$, and it is empty in case of equality. Similarly the $\infty$-th natural collar is empty for both cusps or boundary geodesics. However in the latter case, we could think of the boundary of the natural collar as being the geodesic itself in the case of geodesic boundary.

Also note that by definition
$$\CC_{k+1}(\delta) \subset \CC_k(\delta)$$
as long as $\CC_{k+1}(\delta)$ and $ \CC_k(\delta)$ are defined.

The following proposition, crucial in our approach, really captures the relationship between the geometry of the natural collars and the dynamics of geodesic behavior which enter them. 

\begin{proposition}\label{prop:naturalk}
A complete geodesic path enters the $k$-th natural collar if and only if it forms a loop that turns $k$ times around the boundary component.
\end{proposition}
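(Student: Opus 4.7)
The plan is to prove the three cases (boundary cusp, cone point, simple closed geodesic) by direct computation in the universal cover, generalizing the $k=1$ argument of Proposition \ref{prop:natural}. In each case I would lift an annular neighborhood of $\delta$ to a quotient $\Hyp^2/\langle g\rangle$, with $g$ parabolic, elliptic, or hyperbolic respectively; a complete geodesic on $X$ meeting this neighborhood lifts to a geodesic of $\Hyp^2$, and ``forms a loop that turns $k$ times around $\delta$'' becomes the statement that the ``horizontal'' displacement of the lift (along the axis of $g$, or angularly around its fixed point) is at least $k$ translation periods of $g$. The task reduces to verifying, in each model, that this displacement equals exactly $k$ periods precisely when the lift's minimum distance to the fixed set of $g$ equals $D_k$, the collar radius in the definition of $\CC_k(\delta)$.

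First I would handle the cusp: normalize so $\delta$ lifts to $\infty$ with $g(z)=z+1$, so that the horocycle of length $2/k$ is the line $y=k/2$. A non-vertical complete geodesic in $\Hyp^2$ is a Euclidean semicircle $|z-c|=R$; it meets the horoball $\{y>k/2\}$ iff $R>k/2$, and its total horizontal extent $2R$ projects to a winding of $2R$ around the cusp, giving wrapping $\geq k$ iff $R\geq k/2$. For the cone point, normalize so $\delta$ lifts to the origin of the Poincar\'e disk with $g$ rotation by $\theta$; the computation that a geodesic at distance $d_{\min}$ from the origin subtends the angular wedge $2\arcsin(1/\cosh d_{\min})$ at the origin shows that its image wraps $\geq k$ times iff $1/\cosh d_{\min}\geq \sin(k\theta/2)$, i.e.\ $d_{\min}\leq \arccosh(1/\sin(k\theta/2))=D_k$. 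For the geodesic case, normalize so $\delta$ lifts to the imaginary axis with $g(z)=e^\ell z$; a geodesic disjoint from the axis has the form $|z-c|=R$ with $c>R>0$ and closest approach $d_{\min}=\arccosh(c/R)$, and in Fermi coordinates its total displacement along the axis is $\ln\frac{c+R}{c-R}=2\ln\coth(d_{\min}/2)$, giving wrapping $\geq k$ iff $d_{\min}\leq 2\,\mathrm{arctanh}(e^{-k\ell/2})$.

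The main bookkeeping point is to reconcile, in the geodesic case, the formula $2\,\mathrm{arctanh}(e^{-k\ell/2})$ produced by the winding computation with the definition $D_k=\arcsinh(1/\sinh(k\ell/2))$; this is an elementary hyperbolic identity, easily verified by showing that both sides have hyperbolic sine equal to $1/\sinh(k\ell/2)$. With this identity in hand, each of the three cases is a reversible chain of equivalences, yielding the ``if and only if'' statement. The computation also explains why the three formulas defining $D_k$ take their specific forms and why $\CC_k$ degenerates to the empty set exactly when the cone angle equals $\pi/k$ (then $D_k=\arccosh(1)=0$) or when one tries to take the $\infty$-th natural collar of a cusp or boundary geodesic.
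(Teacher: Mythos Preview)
Your computations in $\Hyp^2$ are correct and they do establish the proposition when $X$ has no cone points other than possibly $\delta$ itself: in that situation the (completed) surface has universal cover $\Hyp^2$, a complete geodesic on $X$ genuinely lifts to a complete geodesic of $\Hyp^2$, and your displacement calculation goes through as a reversible chain of equivalences. This is also exactly how the paper handles that case.

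The gap is the sentence ``a complete geodesic on $X$ meeting this neighborhood lifts to a geodesic of $\Hyp^2$.'' When $X$ carries \emph{other} cone points (which admissible surfaces typically do), the universal cover of $X$ is not $\Hyp^2$; only the arc of the geodesic lying inside the embedded annular neighborhood of $\delta$ can be developed into $\Hyp^2/\langle g\rangle$. But the $k$-wrapping loop need not lie in that neighborhood. Concretely, in your cusp model a geodesic with $R$ only slightly larger than $k/2$ enters $\CC_k(\delta)$, yet the symmetric self-intersection point sits at height $\sqrt{R^2-k^2/4}$, which is arbitrarily close to $0$ and hence outside any fixed embedded horoball on $X$. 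So the ``loop'' you produce in $\Hyp^2/\langle g\rangle$ does not a priori correspond to anything on $X$: once the geodesic leaves the embedded collar it is governed by the global geometry of $X$, cone points included, and nothing in your argument prevents it from running into one before closing up.

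This is precisely the difficulty the paper isolates. For the converse direction on cone surfaces it works in the genuine universal cover $\tilde{X}$ (which has cone singularities), takes two $g^k$-related lifts $\gamma_1,\gamma_2$ of the geodesic, and argues by contradiction that they must intersect and bound, together with the $k$-fold lift of $\delta$, a cone-point-free hyperbolic pentagon. The key step---and the place where the hypothesis ``all cone angles are at most $\pi$'' is actually used---is that a geodesic minimizer in a relative homotopy class cannot pass through a cone point of angle $\le\pi$, so one can build an honest right-angled hexagon and derive a trigonometric contradiction with the collar width. Your proposal never invokes the angle condition, which is a sign that this part of the argument is missing.
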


By a loop that turns $k$ times around the boundary component we mean a geodesic loop (based in a point) that is homotopic to the $k$-th power of a simple loop around the boundary component. Note we are not concerned about orientation so this is well defined. Further note that the loop is not necessarily contained inside the collar.

\begin{figure}[h]
\leavevmode \SetLabels
\L(.235*.44) $\delta$\\%
\endSetLabels
\begin{center}
\AffixLabels{\centerline{\includegraphics[width=8cm]{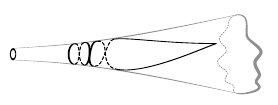}}}
\vspace{-24pt}
\end{center}
\caption{A loop that turns $4$ times around the boundary $\delta$}
\label{fig:Loop}
\end{figure}

\begin{proof}
We prove the proposition when the boundary element is a simple closed geodesic. The proof is identical in the other cases. The basic idea is to understand how deep inside the collar one can go before forming a loop around the boundary curve.

Consider such a loop $\gamma$ and compute its distance to the boundary curve. This distance $d$ is a function of the length of the loop and the length of the boundary geodesic $\delta$. This can be shown by a straightforward hyperbolic trigonometry computation.
\begin{figure}[H]
\leavevmode \SetLabels
\L(.48*-0.14) $k\ell(\delta)$\\%
\L(.48*.3) $d$\\%
\L(.48*.77) $\ell(\gamma)$\\%
\endSetLabels
\begin{center}
\AffixLabels{\centerline{\includegraphics[width=10cm]{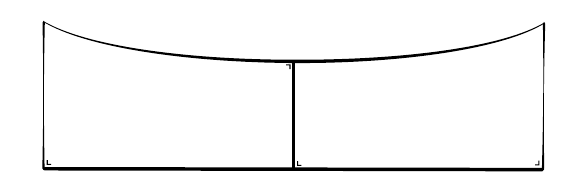}}}
\vspace{-24pt}
\end{center}
\caption{A quadrilateral obtained by unwinding a loop that turns $k$ times around the boundary $\delta$}
\label{fig:Loopunwound}
\end{figure}

An exact formula that relates these quantities is 
$$
\cosh(d)=\tanh\left( \frac{\ell(\gamma)}{2}\right) \coth\left( \frac{k\ell(\delta)}{2}\right) .
$$
The distance increases as a function of the length of the loop and the extremal case is a bi-infinite loop with a base point at infinity which lies exactly at distance
$$
\arccosh\left( \coth\left( \frac{k\ell(\delta)}{2}\right) \right)=\arcsinh\left( \frac{1}{\sinh\left(\frac{k\ell(\delta)}{2}\right)}\right)
$$
from the boundary curve.

We now pass to the converse statement, namely that a complete geodesic that enters the natural collar of $\delta$ contains a subloop that wraps $k$ times around $\delta$. 

Suppose the surface has a complete metric (so it has no cone points, and if there are boundary curves, we add the infinite funnels to complete it). In this case there is a converse to the above statement, namely that any complete geodesic that comes within distance 
$ \arcsinh\left( \frac{1}{\sinh\left(\frac{k \ell(\delta)}{2}\right)}\right)$ of the boundary curve (but does not hit it), contains a loop that wraps $k$ times around the boundary element. To see this, exponentiate forwards and backwards from the point closest to the boundary curve to obtain a geodesic loop entirely contained in the extremal ideal geodesic loop described above, and which must wrap around the base geodesic $k$ times. 

The same proof can be generalized to work for surfaces with cone points, provided the cone points all have angle $\leq \pi$, as we now show. The angle condition is essential for this to work. 

Let $X$ be a cone surface to which we've added infinite funnels to any boundary geodesics it might have. It has a universal cover $\tilde{X}$ which for non cone points is locally isometric to $\Hyp$, and for cone points is a locally isometric to a cone point of the corresponding angle. Provided $X$ has enough topology, $\tilde{X}$ is quasi-isometric to $\Hyp$, but note that if for instance $X$ is (topologically) a sphere with three cone points, then $\tilde{X} = X$ and if $X$ is a torus with cone points, then $\tilde{X}$ is quasi-isometric to the Euclidean plane. This just depends on the underlying conformal structure of $X$ with its cone points filled in. For the sake of simplicity, we can think of $\tilde{X}$ as being a "bumpy" hyperbolic plane, but this doesn't really play a part in the proof.

Let $\delta$ be a simple closed geodesic boundary and suppose that a complete geodesic $\gamma$ has entered its $k$-natural collar. We lift a $k$-fold copy of $\delta$ to $\tilde{X}$ to obtain the following setup (see Figure \ref{fig:bumpy1}) where $d$ is the distance between $\gamma$ and $\delta$.

\begin{figure}[H]
\leavevmode \SetLabels
\L(.48*1.02) $k\ell(\delta)$\\%
\L(.26*.72) $d$\\%
\L(.73*.72) $d$\\%
\L(.29*.22) $\gamma_1$\\%
\L(.688*.22) $\gamma_2$\\%
\endSetLabels
\begin{center}
\AffixLabels{\centerline{\includegraphics[width=8cm]{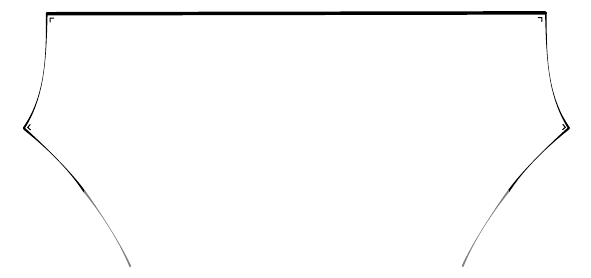}}}
\vspace{-24pt}
\end{center}
\caption{Lifting to $\tilde{X}$}
\label{fig:bumpy1}
\end{figure}

We look at the two lifts of $\gamma$ coming from exponentiating to the right and to the left from the closest point to $\delta$. We will now prove that these two lifts, $\gamma_1$ and $\gamma_2$, intersect and, in $\tilde{X}$, form a proper hyperbolic pentagon with the $k$-fold lift of $\delta$, and the two lifts of the distance path between $\gamma$ and $\delta$. In particular, we will show that this pentagon doesn't have any cone points inside. We argue by contradiction. 

Suppose this is not the case: that is, either $\gamma_1$ and $\gamma_2$ do not intersect, or, if they do, that they enclose some number of cone points (hence our pentagon has interior cone points). In both cases we consider the free homotopy class of paths between $\gamma_1$ and $\gamma_2$ which is freely homotopic to the path that leaves from $\gamma_1$, follows the distance path of length $x$ (slightly to the right) until it hits the lift of $\delta$, then follows the lift of $\delta$ before following the second lift of the distance path to $\gamma_2$ (see Figure \ref{fig:bumpy2}). There is a geodesic minimizer among such paths in the free homotopy class (with endpoints gliding on $\gamma_1$ and $\gamma_2$), perpendicular in its endpoints.

\begin{figure}[H]
\leavevmode \SetLabels
\L(.48*1.02) $k\ell(\delta)$\\%
\L(.26*.72) $d$\\%
\L(.73*.72) $d$\\%
\L(.29*.22) $\gamma_1$\\%
\L(.688*.22) $\gamma_2$\\%
\endSetLabels
\begin{center}
\AffixLabels{\centerline{\includegraphics[width=8cm]{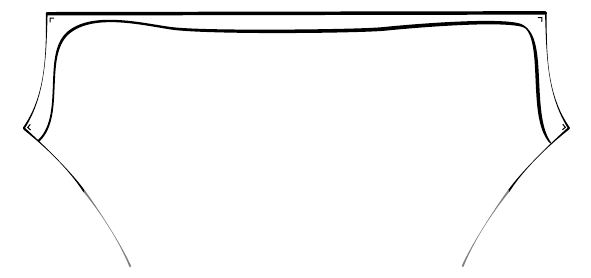}}}
\vspace{-24pt}
\end{center}
\caption{The homotopy class of path between $\gamma_1$ and $\gamma_2$}
\label{fig:bumpy2}
\end{figure}

Note that the homotopy class we are considering is in the underlying surface with the cone points marked, that is we are {\it not} letting the homotopy cross any cone points. As we are not necessarily in $\Hyp$, as there are possibly cone points, we might be worried that such a geodesic minimizer gets "stuck" on a cone point, but this is where the angle condition on the cone points comes into play. 

Here is the key observation: at any point, a geodesic minimizer cannot have angle less than $\pi$ on {\it both} of its sides. Now as the cone points all have angle at most $\pi$, this means that the geodesic minimizer cannot pass through a cone point. 

Thus we get a smooth geodesic representative in this homotopy class, and we can conclude that, together with the other paths, it encloses a hyperbolic right angled hexagon (see Figure \ref{fig:bumpy3}). 

\begin{figure}[H]
\leavevmode \SetLabels
\L(.48*1.02) $k\ell(\delta)$\\%
\L(.37*.82) $k\ell(\delta)/2$\\%
\L(.26*.72) $d$\\%
\L(.735*.72) $d$\\%
\L(.29*.22) $\gamma_1$\\%
\L(.692*.22) $\gamma_2$\\%
\endSetLabels
\begin{center}
\AffixLabels{\centerline{\includegraphics[width=8cm]{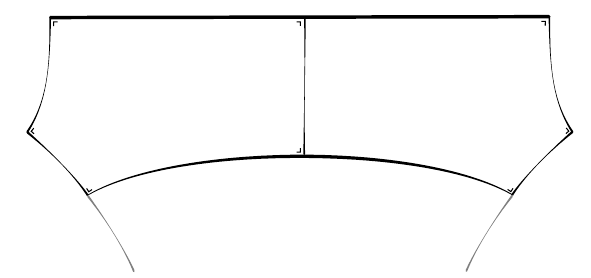}}}
\vspace{-24pt}
\end{center}
\caption{The embedded hexagon and the two isometric pentagons it splits into}
\label{fig:bumpy3}
\end{figure}

We can now argue by hyperbolic trigonometry in the hexagon. As two non consecutive sides are of length $x$, we can break into two pentagons as in Figure \ref{fig:bumpy3} and from this we see that $x$ must satisfy the relation 
$$
\sinh(d) \sinh\left(\frac{k \ell(\delta)}{2}\right) \geq 1
$$
which is impossible, because we have supposed that $\gamma$ has entered the $k$-th natural collar and hence $x$ is less than the width of the $k$-th natural collar. 

The case where $\delta$ is a cone point angle works in the same way, only the conclusion at the end involves a different hyperbolic trigonometry formula.

\begin{figure}[htbp]
\begin{center}
\includegraphics[width=10cm]{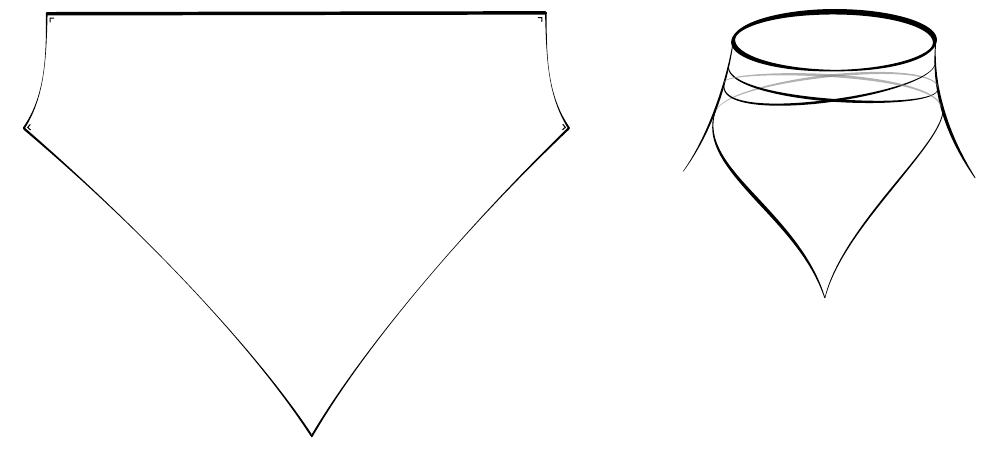}
\caption{The pentagon embedded in $\tilde{X}$ and the projected loop on $X$ (in this example $k=2$)}
\label{fig:bumpy4}
\end{center}
\end{figure}

This establishes that $\tilde{X}$ contains a hyperbolic pentagon as previously claimed. Now we can conclude by projecting the pentagon to $X$ to obtain a geodesic loop that wraps $k$ times around $\delta$ (see Figure \ref{fig:bumpy4}).
\end{proof}

The region disjoint from a set of natural collars we call the {\it concave core} of $X$. This notion is always relative to a choice of natural collars, thus an attribution of integers to each boundary. More precisely, let $X$ have boundary elements $\delta_1,\hdots,\delta_n$, and let $\vec{k}=(k_1,\hdots,k_n)$ be a grading. Recall that each $k_i \in \N \cup \infty$.

\begin{definition}\label{def:concavecore}
The concave core of $X$, relative to a choice of integer for each boundary component, is the set 
$$V_{\vec{k}}(X)
:= X \setminus \{\CC_{k_1}(\delta_1) \cup \hdots \cup \CC_{k_n}(\delta_n)\}$$
provided $\CC_{k_i}(\delta_i)$ are all well defined. 
\end{definition}

Note that the well defined problem above only comes into play when you have cone angles. More specifically, given a cone point surface, each cone point boundary of angle $\theta$ has a {\it maximal} grade: this is the largest integer $k$ such that $k \theta \leq \pi$. As such, any $X$ has a maximal concave core.

Note that $V_{\vec{k}}(X)$ is always connected and closed and if $\vec{k} \geq \vec{l}$ (with respect to the natural partial order) then $V_{\vec{k}}(X)\subset V_{\vec{l}}(X)$ when both are defined. With the notation $\vec{\infty}= \overbrace{(\infty,\hdots,\infty)}^{n}$, we have $V_{\vec{\infty}}(X) = X$ (which only makes sense when $X$ has no cone points). Said differently, a surface has no cone points if and only if it is equal to its maximal concave core. 

\subsection{Subsets of orthogeodesics}
\label{subsec: subsets of orthogeodesics}

This characterization of geodesic behavior in the neighborhood of boundary elements allows for a classification of orthogeodesics. At both ends, orthogeodesics leave from the boundary so traverse all natural collars of these boundary elements orthogonally. In the mid portion they might re-enter natural collars before leaving again. By the previous proposition, how deep they go inside depends essentially on how many times they wrap around the corresponding boundary element. In particular, besides its ends, an orthogeodesic might not entirely live in a given concave core but it will live in all but a finite number of them. We define the {\it depth} of an orthogeodesic $\mu$ relative to a boundary component $\delta$ to be the smallest integer $k$ such that $\mu$ does not intersect $\CC_k(\delta)$ outside of its ends.

We denote by $\OO_{\vec{k}}(X)$ the set of orthogeodesics of $V_{\vec{k}}(X)$, and by $\overline{\OO}_{\vec{k}}(X)$ their unoriented counterparts. Note that for an orthogeodesic $\mu\in \OO_{\vec{k}}(X)$, there is a well-defined orthogeodesic $\eta$ of $\OO(X)$ such that $\mu$ is the restriction of $\eta$ to $V_{\vec{k}}(X)$. In this manner, we can think of $\mu\in \OO_{\vec{k}}(X)$ as a subset of $\OO(X)$. This point of view is useful when we think of these sets as index sets. More generally we have
$$
 \OO_{\vec{k}}(X) \subset \OO_{\vec{l}}(X)
 $$
 if $k_i\leq l_i$ for all $i$ when these are defined, when thought of as abstract sets, even though geometric quantities related to an orthogeodesic, such as their length, differ depending on what concave core it is considered on. Finally, observe that the subsets $\OO_{\vec{k}}(X)$ form a natural exhaustion of $\OO(X)$.

\section{Primality of orthogeodesics}\label{sec:prime}

Here we use the immersed pair of pants point of view to define a notion of primality for orthogeodesics. We begin with a first case which illustrates the more general setup nicely. For technical reasons that will become apparent in what follows, we require that $X$ not be (topologically) a pair of pants. The notation $\vec{1}$ will be used for $\underbrace{(1,\hdots,1)}_{n}$.

\subsection{The set of $\vec{1}$-prime orthogeodesics}

We begin by considering the smallest sets of orthogeodesics $\OO_{\vec{1}}(X)$ and the associated immersed pairs of pants $\{P_\mu \in \PP(X) \mid \mu \in \OO_{\vec{1}}(X)\} $ which we denote $\PP_{\vec{1}}(X)$. The boundary curves of the pair of pants $\gamma \in \partial\PP_{\vec{1}}(X)$ may or may not live in the concave core $V_{\vec{1}}(X)$. In the sequel we will show that it lives in the concave core for special orthogeodesics (see Remark \ref{rem:concaveprime}). 

Consider an element $\gamma \in \partial\PP_{\vec{1}}(X)$ and its corresponding orthogeodesic $\mu$ between boundary elements $\alpha$ and $\beta$ of $V_{\vec{1}}(X)$ (not necessarily distinct). It is the boundary of an immersed pair of pants $P$ coming from an immersion
$$\varphi: \tilde{P}\to X$$
We denote by $\tilde{\mu} := \varphi^{-1}(\mu)$ and $\tilde{\gamma} := \varphi^{-1}(\gamma)$.

\begin{figure}[h]
\leavevmode \SetLabels
\L(.15*0.66) $\tilde{\alpha}$\\%
\L(.48*0.66) $\tilde{\beta}$\\%
\L(0.31*0.42) $\tilde{\mu}$\\%
\L(.58*.9) $\alpha$\\%
\L(.868*.83) $\beta$\\%
\L(.68*.53) $\mu$\\%
\L(.543*0.51) $\varphi$\\%
\endSetLabels
\begin{center}
\AffixLabels{\centerline{\includegraphics[width=12cm]{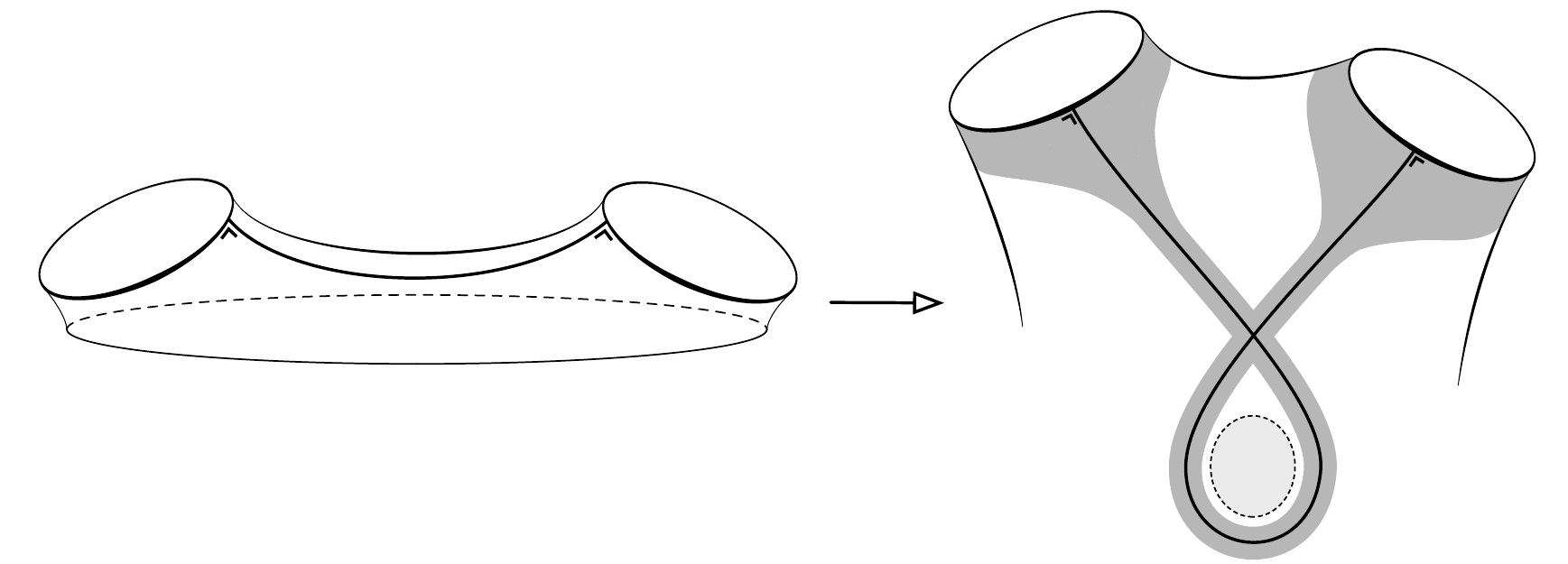}}}
\vspace{-30pt}
\end{center}
\caption{The immersion $\varphi$ of a pair of pants. The immersed pants is shaded.}
\label{fig:immersion}
\end{figure}

We also denote $\tilde{\alpha}:= \varphi^{-1}(\alpha)$ and $\tilde{\beta}:= \varphi^{-1}(\beta)$. 

Note that boundary curves of $\tilde{P}$ are $\tilde{\alpha}$, $\tilde{\beta}$ and $\tilde{\gamma}$. The orthogeodesic $\tilde{\mu}$ is a simple orthogeodesic of $\tilde{P}$ between $\tilde{\alpha}$ and $\tilde{\beta}$. Note that any other orthogeodesic $\tilde{\eta}$ with both ends lying on either $\tilde{\alpha}$ or $\tilde{\beta}$ corresponds to an orthogeodesic $\eta \in \OO_{\vec{1}}(X)$. The corresponding immersed pair of pants $P_\eta$ is, as a set, contained inside $P_\mu$. This allows us to define a partial order on $\PP_{\vec{1}}(X)$ given by
$$
P_\eta < P_\mu {\mbox{ if }} P_\eta \subset P_\mu
$$
This leads to the notion of maximality. 

\begin{definition} An element of $\PP_{\vec{1}}(X)$ is said to be $\vec{1}$-prime if it is maximal with respect to this partial order.
\end{definition}

We also get a partial order on unoriented orthogeodesics. If for $\mu,\eta \in \OO_{\vec{1}}(X)$ we denote the corresponding unoriented orthogeodesics by $\bar{\mu}$ and $\bar{\eta}$, we have 
$$
\bar{\mu} < \bar{\eta} \mbox{ if } P_\eta \subset P_\mu.
$$
If we want to define a partial order on $\OO_{\vec{1}}(X)$, we need to be a little bit more careful. We have
$$
 \eta < \mu {\mbox{ if $\eta$ and $\mu$ start on the same boundary element and if }} P_\eta \subset P_\mu.
$$

Via the one-to-one correspondence, primality then extends to unoriented orthogeodesics $\overline{\OO}_{\vec{1}}(X)$. As $\OO_{\vec{1}}(X)$ is a subset of $\OO(X)$, we get a partial order on all orthogeodesics and hence there is a notion of $\vec{1}$-primality for an element of $\OO(X)$. 

The following proposition about primality will be proved later in a more general context.

\begin{proposition}\label{prop:prime1}
If $\bar{\eta}\in \overline{\OO}_{\vec{1}}(X)$ is not $\vec{1}$-prime then there exists a unique $\vec{1}$-prime $\bar{\mu}$ such that $\bar{\eta} < \bar{\mu}$. 
\end{proposition}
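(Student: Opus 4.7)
The plan is to establish existence by a finiteness argument and uniqueness by passing to the universal cover.

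For existence, I would consider the set $\mathcal{S} = \{P_\mu \in \PP_{\vec{1}}(X) : P_\eta \subseteq P_\mu\}$, which is nonempty by hypothesis, and show that it is finite so that it admits maximal elements. The key observation is that when $P_\eta \subsetneq P_\mu$, lifting to the abstract pants $\tilde{P}_\mu$ places the simple curve $\tilde{\gamma}_\eta$ in the interior and presents it as homotopic to a primitive curve that wraps around $\tilde{\gamma}_\mu$ at least once (interleaved with traversals along $\tilde{\alpha}, \tilde{\beta}$). Since the pants immersion $\varphi_\mu : \tilde{P}_\mu \to X$ is locally isometric, this forces $\ell(\gamma_\mu) < \ell(\gamma_\eta)$. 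Combined with the discreteness of the closed geodesic length spectrum of $X$ and the fact that the assignment $\mu \mapsto \gamma_\mu$ is injective on the pants containing $P_\eta$, we get $|\mathcal{S}| < \infty$.

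For uniqueness, suppose $P_\mu$ and $P_{\mu'}$ are both prime and both contain $P_\eta$. I would fix a lift $\tilde{\eta}$ of $\eta$ in the universal cover $\tilde{X}$, with endpoints on specific lifts $\tilde{\alpha}, \tilde{\beta}$ of the $X$-boundary components at $\eta$'s endpoints. Each of $P_\mu$ and $P_{\mu'}$ determines, via the component of its preimage containing our chosen lift of $P_\eta$, a region in $\tilde{X}$ bounded by the common lifts $\tilde{\alpha}, \tilde{\beta}$ and a third geodesic $\tilde{\gamma}_\mu$, respectively $\tilde{\gamma}_{\mu'}$, on the side of $\tilde{\eta}$ opposite $\tilde{\gamma}_\eta$. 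The condition that $\mu, \mu' \in \OO_{\vec{1}}(X)$ translates to the constraint that the bounded region contains no interior lifts of $X$-boundary elements. Maximality of $P_\mu$ corresponds to $\tilde{\gamma}_\mu$ being pushed as far from $\tilde{\eta}$ as possible subject to this constraint: it is the axis of the first closed geodesic one encounters on extending outward from $\tilde{\eta}$ such that the enclosed region is still an embedded pants. Since this ``first obstruction'' is uniquely determined by the combinatorial arrangement of $X$-boundary lifts in $\tilde{X}$ along that direction, we obtain $\tilde{\gamma}_\mu = \tilde{\gamma}_{\mu'}$, hence $\gamma_\mu = \gamma_{\mu'}$, and therefore $P_\mu = P_{\mu'}$ and $\bar{\mu} = \bar{\mu}'$.

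The hard part will be the uniqueness argument. In particular, making the ``first obstruction'' notion precise requires careful control over the configuration of $X$-boundary lifts in $\tilde{X}$, especially in the presence of cone points where $\tilde{X}$ is a ``bumpy'' hyperbolic plane rather than $\Hyp$, as discussed in the proof of Proposition~\ref{prop:naturalk}. Existence, by contrast, reduces to a relatively standard length-spectrum finiteness argument once the inequality $\ell(\gamma_\mu) < \ell(\gamma_\eta)$ is in place.
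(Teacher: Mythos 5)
Your route is genuinely different from the paper's, which proves this proposition via model surfaces: the paper first establishes (Proposition~\ref{prop:model1}) that an orthogeodesic is $\vec{1}$-prime iff its geodesic representative on a model surface $M_{\vec{1}}$ (all boundaries are cone points of angle $\pi$) is properly immersed, and then derives existence and uniqueness simultaneously from the uniqueness of geodesic minimizers on $M_{\vec{1}}$. Concretely, if $\eta$ is not prime, its minimizer on $M_{\vec{1}}$ hits cone points in the interior; because the cone angle is exactly $\pi$, the minimizer is forced to reverse direction at each such hit, so it is a back-and-forth concatenation of a single properly immersed segment, and that segment is the unique prime $\mu$ with $\eta < \mu$. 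Your proposal (length-spectrum finiteness for existence, a universal-cover ``first obstruction'' for uniqueness) bypasses the model surface entirely, but it has real gaps.

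First, the existence argument relies on the asserted inequality $\ell(\gamma_\mu) < \ell(\gamma_\eta)$ when $P_\eta \subsetneq P_\mu$. Your justification does not hold up as written: you call $\tilde{\gamma}_\eta$ a \emph{simple} curve in $\tilde{P}_\mu$, but it is non-simple precisely when $\eta < \mu$ strictly, and noting that $\varphi_\mu$ is a local isometry is not by itself a length comparison between a boundary geodesic and a non-simple interior geodesic of a pair of pants. The inequality is plausibly true, but it needs an actual hyperbolic-trigonometric or model-surface argument (and is essentially what the model-surface collapse hands you for free). Second, and more seriously, the uniqueness step is only a sketch, as you acknowledge. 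Inclusion-maximality of immersed pairs of pants does not obviously translate into a linearly ordered ``distance from $\tilde{\eta}$'' for the lifts $\tilde{\gamma}_\mu$; without such a total order, ``first obstruction'' is not a well-defined notion, and you have not ruled out two incomparable prime pants both containing $P_\eta$. This is exactly where the paper's mechanism does the work: the unique geodesic minimizer on $M_{\vec{1}}$ is the rigorous replacement for your informal first-obstruction principle, it handles cone points (where $\tilde{X}$ is not $\Hyp$) automatically, and it extends unchanged to the general $\vec{k}$ case (Proposition~\ref{prop:uniqueprimek}), which your approach would have to redo from scratch.
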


The result about unicity does {\it not} necessarily hold for oriented orthogeodesics, but to what extent it fails is easy to understand. The problem only appears when we have oriented orthogeodesics leaving and ending on the same boundary element. In this case, there are two prime orthogeodesics: the orthogeodesic and the one with opposite orientation.

We denote the set of $\vec{1}$-prime immersed pair of pants, resp. orthogeodesics, by $\PP'_{\vec{1}}(X)$, resp. $\OO'_{\vec{1}}(X)$. We give three examples of orthogeodesics on a surface with a single boundary component in Figure \ref{fig:primeornot}, and one of them is not $\vec{1}$-prime. 

\begin{figure}[htbp]
\begin{center}
\includegraphics[width=10cm]{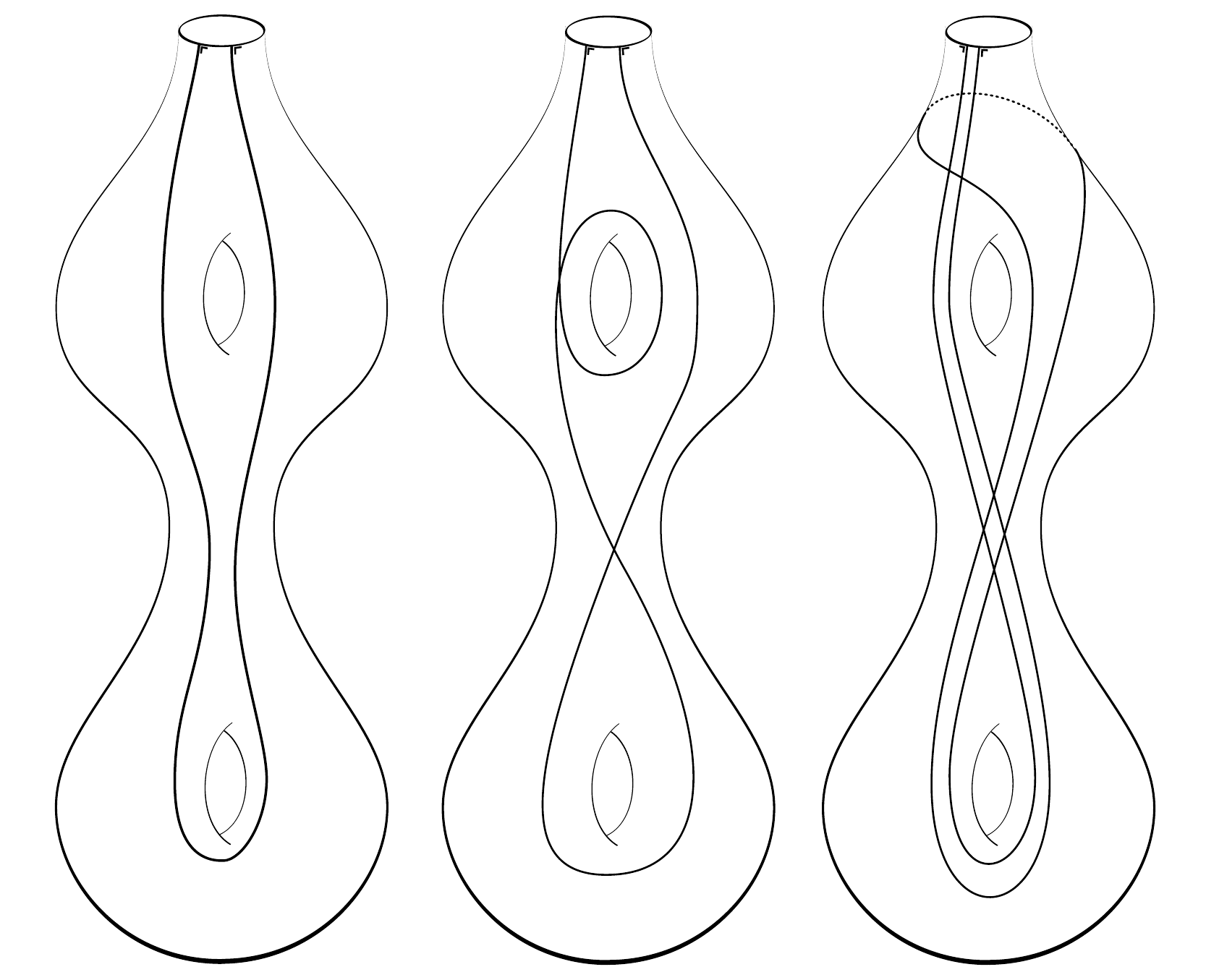}
\caption{Which orthogeodesic is not $\vec{1}$-prime?}
\label{fig:primeornot}
\end{center}
\end{figure}

The point to be stressed here is that with just this definition in hand, both proving Proposition \ref{prop:prime1} and figuring out if a given orthogeodesic is prime, is not a priori obvious. For this we introduce a tool. 

\subsection{Model surfaces, take $\vec{1}$}

Given a topological surface $\Sigma=\Sigma_{g,n}$, homeomorphic to $X$, we define a $\vec{1}$-model surface $M_{\vec{1}}$ to be a hyperbolic surface with all boundary elements realized as cone points of angle $\pi$. In this case we ask $\Sigma$ be of negative Euler characteristic and not be homeomorphic to a pair of pants, it is always possible to find such a metric unless $(g,n)=(0,4)$. We will return to this case in the sequel. Note that the choice of such a specific hyperbolic metric is irrelevant, as will be made precise in Proposition \ref{prop:whocares}.

Orthogeodesics of $X$ correspond to paths between boundary in $\Sigma$ up to homotopy relative to boundary. These paths can be realized on $M_{\vec{1}}$ and in fact have unique geodesic minimizers. These minimizers may however ``collapse" in the cone point singularities and will in general consist in a collection of geodesics between the $\pi$ cone point singularities of $M_{\vec{1}}$. Those that only pass through a cone point singularity at end points are special. These will be called {\it properly immersed} and are intimately related to $\vec{1}$-prime orthogeodesics.

\begin{proposition}\label{prop:model1}
An immersed pair of pants (or orthogeodesic) is $\vec{1}$-prime if and only if the corresponding geodesic on $M_{\vec{1}}$ is properly immersed.
\end{proposition}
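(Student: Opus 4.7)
The plan is to relate the geodesic minimizer of $\mu$'s homotopy class on $M_{\vec{1}}$ to the combinatorics of immersed pants on $X$. First I would set up the dictionary: a marking of $\Sigma$ gives a homotopy equivalence $X \simeq M_{\vec{1}}$ under which $\mu \in \OO_{\vec{1}}(X)$, joining boundaries $\alpha$ and $\beta$, corresponds to a rel-endpoint homotopy class of paths on $M_{\vec{1}}$ between two cone points. Since every cone angle of $M_{\vec{1}}$ equals $\pi < 2\pi$, the local angle condition forbids any geodesic minimizer from passing through a cone in its interior; hence each minimizer is a concatenation of smooth geodesic arcs whose only interior meeting points are cones, and being properly immersed means this concatenation has length one.

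For the direction \emph{properly immersed $\Rightarrow$ prime}, I argue by contrapositive. Assume $\mu$ is not $\vec{1}$-prime, so $P_\mu \subsetneq P_\eta$ for some $\eta \in \OO_{\vec{1}}(X)$. Lifting $\mu$ through the immersion $\varphi_\eta : \tilde{P}_\eta \to X$ produces an orthogeodesic $\tilde{\mu}$ on the standard pair of pants $\tilde{P}_\eta$ with endpoints on two of its boundaries. A topological inspection shows that $\tilde{\mu}$ cannot be the simple seam between those two boundaries (else $P_\mu = P_\eta$), so $\tilde{\mu}$ must wind around the third boundary $\tilde{\gamma}_\eta$ of $\tilde{P}_\eta$. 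Pushing this winding forward through $\varphi_\eta$ and then collapsing to $M_{\vec{1}}$ forces the geodesic minimizer of $\mu$'s class to visit, in its interior, the cone point image of the boundary of $X$ that $\tilde{\gamma}_\eta$ covers, contradicting proper immersion.

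For \emph{prime $\Rightarrow$ properly immersed}, again by contrapositive, suppose the minimizer of $\mu$'s class on $M_{\vec{1}}$ decomposes at an interior cone $\delta$ as $g_1 \cdot g_2$. Lifting back to $X$, the arcs $g_1$ and $g_2$ provide a representative of $\mu$'s rel-endpoint homotopy class of the form $\mu_1 \cdot \sigma \cdot \mu_2$, where $\mu_1, \mu_2$ are orthogeodesic paths from $\alpha$ to $\delta$ and from $\delta$ to $\beta$, and $\sigma$ is an arc of $\partial \delta$ subtending less than half the total boundary (the $\pi$ cone bounds the turning angle on $M_{\vec{1}}$). From this decomposition I would produce an orthogeodesic $\eta \in \OO_{\vec{1}}(X)$ whose immersed pants strictly contains $P_\mu$: concretely, $\eta$ realizes the outer boundary of an immersed pair of pants that engulfs $\mu$ together with a neighborhood of $\delta$. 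Because $\sigma$ wraps strictly less than once around $\delta$, the orthogeodesic $\eta$ avoids every first natural collar and hence lies in $\OO_{\vec{1}}(X)$, so $\mu$ is not prime.

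The main obstacle is the topological bookkeeping in the last step: one must exhibit $\eta$ in a uniform way, verify the strict containment $P_\mu \subsetneq P_\eta$ by comparing the pants covers $\tilde{P}_\mu$ and $\tilde{P}_\eta$, and handle the degenerate cases where two of $\alpha, \beta, \delta$ coincide or where $\varphi_\eta$ folds boundaries of $\tilde{P}_\eta$ onto the same boundary of $X$. I would also use the uniqueness statement in Proposition~\ref{prop:prime1} to argue that the candidate $\eta$ is essentially forced by the breakpoint $\delta$, which organizes the construction and simplifies the case analysis.
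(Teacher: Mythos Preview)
Your Direction~1 contains a genuine error. You write that winding of $\tilde\mu$ around $\tilde\gamma_\eta$ forces the minimizer of $\mu$ on $M_{\vec 1}$ through ``the cone point image of the boundary of $X$ that $\tilde\gamma_\eta$ covers'', but $\tilde\gamma_\eta$ does not cover a boundary of $X$: its image $\gamma_\eta=\varphi_\eta(\tilde\gamma_\eta)$ is an \emph{interior} closed geodesic, which has no associated cone point on $M_{\vec 1}$. Also, the assertion that a non-seam $\tilde\mu\in\OO_{\vec 1}$ must wind around $\tilde\gamma_\eta$ (rather than some other nontrivial word in $\pi_1(\tilde P_\eta)$) is not justified.

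The paper replaces this whole winding analysis by one geometric fact you do not use: when both boundary cone angles of a pair of pants reach $\pi$, the pair of pants degenerates to a single segment (its third boundary folds onto the simple seam). Thus on $M_{\vec 1}$ the set $\tilde P_\mu$ \emph{is} the segment $\tilde\mu$, and every orthogeodesic $\tilde\eta\subset\tilde P_\mu$ between $\tilde\alpha$ and $\tilde\beta$ is automatically a back-and-forth concatenation of copies of $\tilde\mu$. This gives $\eta<_{\vec 1}\mu$ strictly $\Rightarrow$ $\eta$ not properly immersed, with no topological bookkeeping about which boundary is being wound around.

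Your Direction~2 has two problems. First, the dominating $\eta$ is never actually constructed; ``the outer boundary of an immersed pair of pants that engulfs $\mu$ together with a neighborhood of $\delta$'' does not specify an element of $\OO_{\vec 1}(X)$, and the verification that it lies in $\OO_{\vec 1}(X)$ and strictly contains $P_\mu$ is precisely the hard part. Second, invoking Proposition~\ref{prop:prime1} here is circular: in the paper that uniqueness statement is \emph{deduced from} the present proposition (via the model-surface description plus uniqueness of geodesic minimizers), so it cannot be used as input. In the paper's setup the converse direction also comes from the collapse picture: since on $M_{\vec 1}$ every $P_\mu$ has shrunk to the minimizer of $\mu$, the relation $<_{\vec 1}$ is read off directly from how minimizers sit inside one another, and the maximal elements are exactly the single properly-immersed segments.
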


\begin{proof}
The main point is to observe what happens to immersed pairs of pants on the model surface. 

Consider a pair of pants with two boundary cone points of angle $\theta <\pi$ and a third boundary element a simple closed geodesic $\gamma$. Now if we let $\theta \to \pi$, with the length of $\gamma$ fixed, in the limit the entire pair of pants is the limit of $\gamma$.

\begin{figure}[htbp]
\begin{center}
\includegraphics[width=12cm]{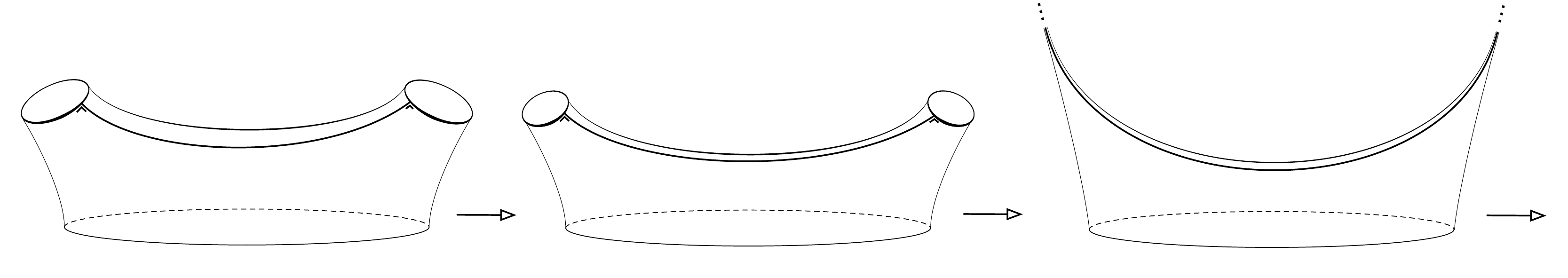}
\vspace{-20pt}
\end{center}
\end{figure}
\begin{figure}[htbp]
\begin{center}
\includegraphics[width=14cm]{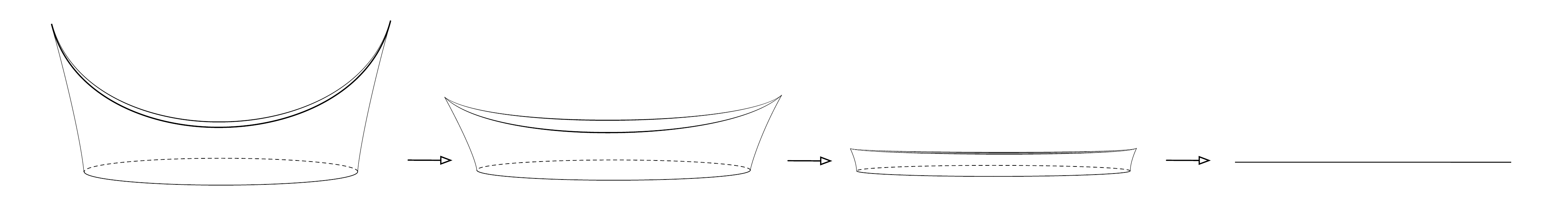}
\caption{A pair of pants: from geodesic boundary, to cusp boundary, to cone point boundary, to a degenerate pair of pants}
\label{fig:collapse}
\end{center}
\end{figure}

With the point of view of immersed pair of pants on $X$, consider $\mu$ as the image by $\varphi$ of $\tilde{\mu} \subset \tilde{P}_{\mu}$. If $\eta < \mu$ then $\eta$ can be realized as the image by $\varphi$ of an orthogeodesic $\tilde{\eta} \subset \tilde{P}_{\mu}$. 

When we look at this setup for orthogeodesics on $M_{\vec{1}}$, then $\tilde{P}_\mu$ has collapsed to a single geodesic segment (in fact $\tilde{P}_\mu = \tilde\mu$) and so $\eta$ can be nothing else but a concatenation of at least two copies of $\mu$. In particular, it is not $\vec{1}$-prime which proves the proposition.
\end{proof}

\subsection{Primality in full generality}\label{subsec:Primality in full generality}

We want to generalize the notion of primality. We shall see how the model surface point of view generalizes very nicely to the general setup and, because it will be equivalent, it could be taken as a definition. However, as before, we begin with the topological setup first. 

Consider an element $\mu \in \OO_{\vec{k}}(X)$ between boundary elements $\alpha$ and $\beta$ of $V_{\vec{k}}(X)$ (not necessarily distinct). Associated to $\mu$ is a curve $\gamma \in \partial\PP(X)$ and $P$ the corresponding immersed pair of pants
$$\varphi: \tilde{P}\to X.$$
We use the same notation as previously, namely $\tilde{\mu} := \varphi^{-1}(\mu)$, $\tilde{\gamma} := \varphi^{-1}(\gamma)$, $\tilde{\alpha}:= \varphi^{-1}(\alpha)$ and $\tilde{\beta}:= \varphi^{-1}(\beta)$.

To the boundary elements $\alpha$ and $\beta$ we've associated integers from the grading $\vec{k}$ corresponding to their order. We denote these integers $k_\alpha$ and $k_\beta$ (their {\it grades}). 

Note that before the partial order on $\PP_{\vec{1}}(X)$ came from the inclusion of immersed pants. These immersed pants were, as point sets, exactly the closure of the the union of all connected components of $X\setminus \gamma$ which intersect $\mu$. For an orientation of $\alpha, \beta$ and $\mu$, the curve $\gamma$ is homotopic to $\alpha*\mu*\beta*\mu^{-1}$. This generalizes as follows. 

We define the homotopy class
$$
[\gamma_\mu]: = [\alpha^{k_\alpha}*\mu*\beta^{k_\beta}*\mu^{-1}]
$$
and $\gamma_\mu $ to be the corresponding closed geodesic. 

With the point of the view of the immersion we have the following setup. We define $\tilde{\gamma}_\mu$ to be the geodesic in the homotopy class
$$
 [\tilde{\alpha}^{k_\alpha}*\tilde{\mu}*\tilde{\beta}^{k_\beta}*\tilde{\mu}^{-1}]
$$
and 
$$
\varphi(\tilde{\gamma}_\mu) = \gamma_\mu
$$
For $\gamma_\mu$, we denote by $Q_\mu$ the immersed pair of pants bounded by $\alpha$, $\beta$ and $\gamma_\mu$. Note that, as before, $Q_\mu$ is the closure of the union of the the connected components of $X\setminus \gamma_\mu$ that intersect $\mu$. Note that if $k_\alpha = k_\beta =1$, then $Q_\mu= P_\mu$. Otherwise $Q_\mu \subset P_\mu$ and the containment is strict. 

\begin{figure}[htbp]
\leavevmode \SetLabels
\L(.2*0.66) $\tilde{\tilde{\alpha}}$\\%
\L(.35*0.66) $\tilde{\tilde{\beta}}$\\%
\L(.13*0.17) ${\tilde{\tilde{\gamma}}}_\mu$\\%
\L(0.29*0.35) $\tilde{\tilde{\mu}}$\\%
\L(.643*.46) $\tilde{\alpha}$\\%
\L(.827*.455) $\tilde{\beta}$\\%
\L(.734*.365) $\tilde{\mu}$\\%
\L(.755*0.66) $\tilde{\gamma}_\mu$\\%
\L(.53*0.55) $\psi$\\%
\endSetLabels
\begin{center}
\AffixLabels{\centerline{\includegraphics[width=12cm]{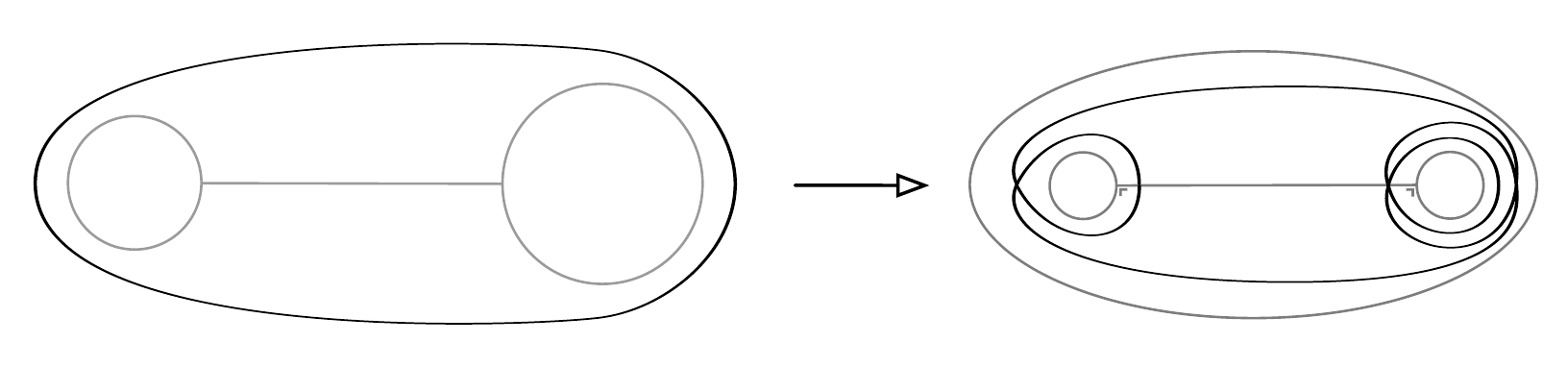}}}
\vspace{-30pt}
\end{center}
\caption{The immersion $\psi$ from $\tilde{\tilde{P}}_\mu$ to $\tilde{P}_\mu$ where $k_\alpha = 2$ and $k_\beta=3$.}
\label{fig:preimmersion}
\end{figure}

The set $Q_\mu$ is also an immersed pair of pants. To see this, it is convenient to see $Q_\mu$ as a the result of a double immersion. We take a pair of pants $\tilde{\tilde{P}}$ with two boundary lengths $k_\alpha \ell(\alpha)$ and $k_\beta \ell(\beta)$ and a simple orthogeodesic of the length of $\mu$ between them. We denote the two boundary components by $\tilde{\tilde{\alpha}}$ and $\tilde{\tilde{\beta}}$ and the third boundary by $\tilde{\tilde{\gamma}}_\mu$. The pair of pants $\tilde{\tilde{P}}_\mu$ is immersed in ${\tilde{P}}_\mu$ via a map $\psi:\tilde{\tilde{P}}_\mu \to {\tilde{P}}_\mu$ as portrayed in Figure \ref{fig:preimmersion}. We define $\tilde{\gamma}_\mu$ to be $\psi\left( \tilde{\tilde{\gamma}}_\mu\right)$. 

Note that the $\psi$ acts $k_\alpha$ to $1$ on $\tilde{\tilde{\alpha}}$ and $k_\beta$ to $1$ on $\tilde{\tilde{\beta}}$. Now applying $\varphi$ gives an immersion $\varphi\circ \psi: \tilde{\tilde{P_\mu}} \to Q_\mu$ (see Figure \ref{fig:doubleimmersion} for an illustration of $\varphi$ and the resulting curve $\gamma_\mu$). The subset corresponding to the preimages of $Q_\mu$ via $\varphi$ we denote by $\tilde{Q}_\mu$. So $\tilde{Q}_\mu \subset {\tilde{P}}_\mu$. 

The set of immersed pairs of pants $Q_\mu$ defined as above is denoted by $\PP_{\vec{k}}(X)$ and the set of curves $\gamma_\mu$ we denote by $\partial\PP_{\vec{k}}(X)$. 

\begin{figure}[H]
\leavevmode \SetLabels
\L(.32*0.58) $\tilde{\gamma}_\mu$\\
\L(.662*0.693) $\gamma_\mu$\\
\L(.489*0.54) $\varphi$\\%
\endSetLabels
\begin{center}
\AffixLabels{\centerline{\includegraphics[width=12cm]{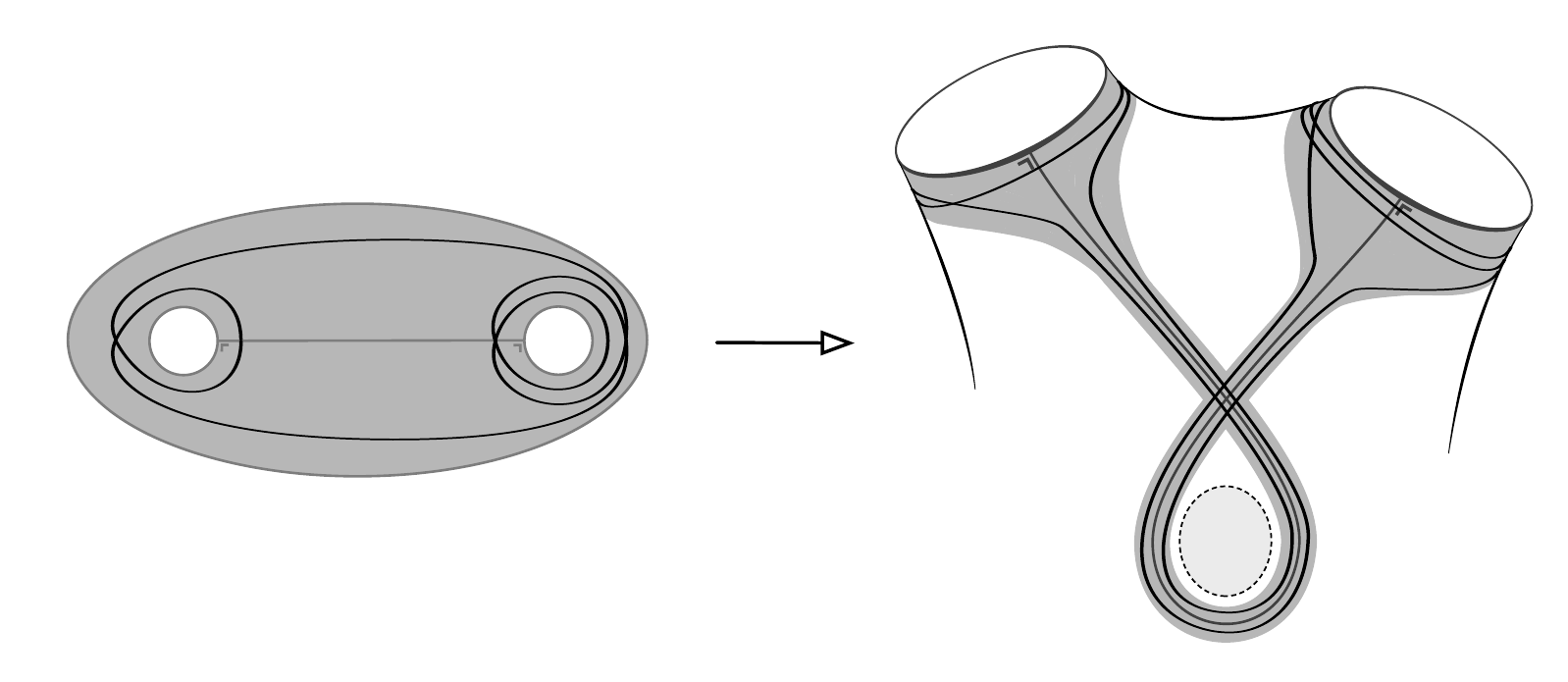}}}
\vspace{-30pt}
\end{center}
\caption{The immersion $\varphi$ from $\tilde{P}_\mu$ to $P_\mu$ (on the right and shaded). The figure is topological and both $\gamma_\mu$ and its preimage $\tilde{\gamma}_\mu$ are illustrated.}
\label{fig:doubleimmersion}
\end{figure}

This defines a partial order $<_{\vec{k}}$ on $\PP_{\vec{k}}(X)$:
$$
 Q_\eta <_{\vec{k}} Q_ \mu {\mbox{ if }} Q_\eta \subset Q_\mu
$$
and then a partial order on $\OO_{\vec{k}}(X)$ given by
$$
\eta <_{\vec{k}} \mu {\mbox{ if }} Q_\eta \subset Q_\mu.
$$
Note that this partial order is {\it not} the same partial order as before and so to avoid any possibility of confusion we have indexed it by ${\vec{k}}$. 

Observe that if $\eta <_{\vec{1}} \mu$ then $\eta <_{\vec{k}} \mu$ for any $\vec{k}$. (This is straightforward with the immersed pants point of view.) We can now define primality in general.
 
\begin{definition} An element of $\PP_{\vec{k}}(X)$, resp. $\OO_{\vec{k}}(X)$, is said to be $\vec{k}$-prime if it is maximal with respect to this partial order.
\end{definition}

Again if we think of the sets $\OO_{\vec{k}}(X)$ as subsets of $\OO(X)$, the above definition makes sense for all orthogeodesics and immersed pairs of pants. In particular, any two distinct elements in $\OO(X)$ may or may not be related by a (finite) number of partial orders.

\begin{definition} An element of $\OO(X)$ (resp. $\overline{\OO}(X)$) is said to be $\vec{k}$-prime if it belongs to $\OO_{\vec{k}}(X)$ (resp. $\overline{\OO}_{\vec{k}}(X)$) and it is maximal with respect to the partial order $<_{\vec{k}}$.
\end{definition}

In particular, for given $\vec{k}$ and $\vec{l} $, we can talk about the $\vec{k}$-primality of an element $\mu \in \OO_{\vec{l}}(X)$. We denote the set of $\vec{k}$-prime elements $\PP'_{\vec{k}}(X)$ and $\OO'_{\vec{k}}(X)$. 

\begin{remark}\label{rem:onetorulethemall} The notion of primality for all the different sets $\OO(X)$, $\OO_{\vec{k}}(X)$, $\overline{\OO}(X)$, $\overline{\OO}_{\vec{k}}(X)$, $\PP_{\vec{k}}(X)$, might seem confusing at first, but in fact they are all the same. If an element of one set is prime, then the corresponding element of the other set is prime too, hence it suffices to work with one of the sets. 
\end{remark}

The following proposition generalizes Proposition \ref{prop:prime1}. 
\begin{proposition}\label{prop:uniqueprimek}
If $\eta \in \overline{\OO}_{\vec{k}}(X)$ is not $\vec{k}$-prime then there exists a unique $\vec{k}$-prime $\mu$ such that $\eta <_{\vec{k}} \mu$.\end{proposition}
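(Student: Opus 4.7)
The plan is to reduce the proposition to the geometry of the model surface $M_{\vec{k}}$ via the characterization stated as the second theorem of the introduction (the $\vec{k}$-generalization of Proposition \ref{prop:model1}): an element of $\OO(X)$ is $\vec{k}$-prime if and only if its geodesic representative on $M_{\vec{k}}$ is properly immersed, meeting cone points only at its endpoints. The strategy is then to translate the partial order $<_{\vec{k}}$ on $X$ into a combinatorial statement about how the representative geodesic arc of $\eta$ on $M_{\vec{k}}$ decomposes at its interior cone points.

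For existence, I would first establish the following dictionary: for $\eta,\mu\in\overline{\OO}_{\vec{k}}(X)$, one has $\eta <_{\vec{k}} \mu$ if and only if the representative of $\eta$ on $M_{\vec{k}}$ is a concatenation of copies of the representative of $\mu$, with appropriate ``wrapping'' around an endpoint cone point between consecutive copies; this degeneration is precisely what happens to $\alpha^{k_\alpha}$ and $\beta^{k_\beta}$ in the defining loop $\gamma_\mu=[\alpha^{k_\alpha}*\mu*\beta^{k_\beta}*\mu^{-1}]$ upon passing to the model surface. Using this, starting from any $\mu_1$ with $\eta<_{\vec{k}}\mu_1$ one builds a chain $\eta<_{\vec{k}}\mu_1<_{\vec{k}}\mu_2<_{\vec{k}}\cdots$ where each $\mu_i$ corresponds to a strictly shorter initial sub-arc of the representative of $\eta$. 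Since the representative of $\eta$ has only finitely many interior cone points (it is a finite-length arc on $M_{\vec{k}}$), the chain terminates at a sub-arc with no interior cone points, which corresponds to a $\vec{k}$-prime $\mu$.

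For uniqueness, suppose $\eta<_{\vec{k}}\mu_1$ and $\eta<_{\vec{k}}\mu_2$ with both $\mu_i$ $\vec{k}$-prime. By the dictionary, the representatives of $\mu_1$ and $\mu_2$ on $M_{\vec{k}}$ are both proper geodesic arcs starting at the cone point corresponding to the initial boundary of $\eta$, such that the representative of $\eta$ is a concatenation of copies of each. Reading the representative of $\eta$ from its initial endpoint up to the first interior cone point yields a single sub-arc which must coincide with each of these representatives (each being proper and starting in the same tangent direction as $\eta$). Hence the representatives of $\mu_1$ and $\mu_2$ agree as geodesic arcs on $M_{\vec{k}}$, giving $\mu_1=\mu_2$ as unoriented orthogeodesics.

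The hard part will be setting up the dictionary above, since it requires careful tracking of how the double immersion $\varphi\circ\psi:\tilde{\tilde P}_\mu\to Q_\mu$ interacts with the cone points of $M_{\vec{k}}$, and in particular how the collapsing of $\alpha^{k_\alpha}$ and $\beta^{k_\beta}$ onto their endpoint cone points precisely encodes the ``wrapping'' between consecutive copies of $\mu$'s representative in the concatenation. Particular attention is needed when $\alpha=\beta$ (so $\eta$ has both ends on the same boundary element), where two oriented primes may arise but they coincide as unoriented orthogeodesics, consistent with the statement.
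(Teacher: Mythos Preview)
Your proposal is correct and follows essentially the same route as the paper. The paper first establishes the model-surface characterization (Proposition~\ref{prop:modelk}), showing in its proof that on $M_{\vec{k}}$ the set $Q_\mu$ collapses to the geodesic segment $\mu$, so that any $\eta<_{\vec{k}}\mu$ is represented on $M_{\vec{k}}$ by a concatenation of copies of that segment; it then dispatches Proposition~\ref{prop:uniqueprimek} in a single line (``follows from the uniqueness of geodesic minimizers''). Your dictionary is exactly this collapsing statement, and your existence/uniqueness arguments are the natural unpacking of that one line: the unique geodesic minimizer of $\eta$ on $M_{\vec{k}}$ breaks at interior cone points into identical proper sub-arcs, the common sub-arc being the sought $\mu$. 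Your chain argument for existence is slightly more elaborate than necessary (one can simply take the first proper sub-arc directly), but it is correct, and your caution about the $\alpha=\beta$ case is well placed and matches the paper's Proposition~\ref{prop:almostuniqueprimek}.
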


By the one to one correspondence, the same proposition holds for elements of $\PP_{\vec{k}}(X)$. Because of orientation issues, the corresponding proposition is slightly more complicated for elements of $\OO_{\vec{k}}(X)$:

\begin{proposition}\label{prop:almostuniqueprimek}
If $\eta \in \OO_{\vec{k}}(X)$ is not $\vec{k}$-prime then there exists a $\vec{k}$-prime $\mu$ such that $\eta <_{\vec{k}} \mu$. If $\eta$ is not unique, then the set of $\vec{k}$-prime oriented orthogeodesics is exactly $\mu$ and $\mu^{-1}$. Furthermore, this occurs exactly when $\mu$ leaves and ends on the same boundary component. 
\end{proposition}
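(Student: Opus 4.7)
The plan is to deduce this oriented statement from the unoriented version (Proposition~\ref{prop:uniqueprimek}) by carefully keeping track of the two-to-one correspondence $\OO_{\vec{k}}(X) \to \overline{\OO}_{\vec{k}}(X)$. Since primality in the oriented and unoriented settings agree by Remark~\ref{rem:onetorulethemall}, the underlying unoriented orthogeodesic $\bar{\eta}$ is likewise not $\vec{k}$-prime; Proposition~\ref{prop:uniqueprimek} then furnishes a unique $\vec{k}$-prime $\bar{\mu} \in \overline{\OO}_{\vec{k}}(X)$ above it. The two oriented lifts $\mu$ and $\mu^{-1}$ of $\bar{\mu}$ are both $\vec{k}$-prime by the same remark, so existence of a $\vec{k}$-prime $\mu$ above $\eta$ reduces to showing that at least one of $\mu, \mu^{-1}$ is comparable to $\eta$ in the oriented order $<_{\vec{k}}$.

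For this I would exploit the fact, familiar from the $\vec{1}$-case treated in the preceding text, that the oriented relation $\eta <_{\vec{k}} \mu'$ consists of the set-theoretic containment $Q_\eta \subset Q_{\mu'}$ together with the requirement that $\eta$ and $\mu'$ emanate from the same boundary component. The containment part is automatic for both $\mu$ and $\mu^{-1}$, since as point sets $Q_\mu = Q_{\mu^{-1}}$. Let $\alpha$ be the boundary component on which the oriented $\eta$ starts, and let $\alpha_\mu, \beta_\mu$ be the (not necessarily distinct) boundary components on which $\mu$ starts and ends. Because the geodesic boundary of $Q_\mu$ consists only of arcs on $\alpha_\mu$ and $\beta_\mu$ together with $\gamma_\mu$ (which lies in the interior of $X$), both endpoints of $\eta$ must lie on $\alpha_\mu \cup \beta_\mu$; in particular $\alpha \in \{\alpha_\mu, \beta_\mu\}$, so at least one of $\mu, \mu^{-1}$ does start on $\alpha$.

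The case distinction now falls out cleanly. If $\alpha_\mu \neq \beta_\mu$, precisely one of $\mu$ and $\mu^{-1}$ starts on $\alpha$, so the $\vec{k}$-prime oriented orthogeodesic above $\eta$ is unique. If instead $\alpha_\mu = \beta_\mu$, then both oriented lifts start on the common boundary, both satisfy $\eta <_{\vec{k}} \mu$ and $\eta <_{\vec{k}} \mu^{-1}$, and by the uniqueness of $\bar{\mu}$ at the unoriented level no further $\vec{k}$-prime can appear above $\eta$. This yields both the non-uniqueness clause and the furthermore.

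The delicate point, and the only one really requiring attention, is the interplay between the unoriented containment $Q_\eta \subset Q_\mu$ (which is symmetric under reversing $\mu$) and the starting-boundary requirement of the oriented partial order. This interplay is precisely what singles out the self-loop case as the unique source of non-uniqueness; once it has been isolated, the proposition reduces to a direct translation of Proposition~\ref{prop:uniqueprimek}.
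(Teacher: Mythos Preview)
Your proposal is correct and follows essentially the same approach as the paper. The paper does not give a separate detailed proof of this proposition; immediately after stating it, the paper remarks that the difference from Proposition~\ref{prop:uniqueprimek} is straightforward because an oriented orthogeodesic and its reverse define the same immersed pair of pants, so the only distinguishing datum is the starting boundary component --- precisely the mechanism you isolate and flesh out.
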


The proof of both propositions will use model surfaces. Why there is a difference between the two propositions is straightforward however: an oriented orthogeodesic and its opposite define the same immersed pair of pants which in turn define the partial order. Hence, the only way to differentiate between them is if they have a different boundary element they leave from.

\subsection{Model surfaces, the general case}
\label{subsec: Model surfaces}

Given a topological surface $\Sigma=\Sigma_{g,n}$, homeomorphic to $X$ and of negative Euler characteristic, we define a $\vec{k}$-model surface $M_{\vec{k}}$ to be a hyperbolic surface with all ordered boundary elements realized as cone points of angles $\frac{\pi}{k_1}, \hdots, \frac{\pi}{k_n}$ provided such a surface exists. Note that if $k_i = \infty$, then the corresponding boundary element of $M_{\vec{k}}$ is a cusp. 

\begin{proposition}\label{prop:whocares}
Let $X$ and $Y$ be a choices of model surface $M_{\vec{k}}$ and let $[c]$ be a homotopy class of path with end points on boundary elements. The unique geodesic representative of $[c]$ on $X$ is properly immersed if and only if it is properly immersed on $Y$. 
\end{proposition}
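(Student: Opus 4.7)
The strategy is to connect $X$ and $Y$ by a continuous path of model surfaces and show that ``properly immersed'' is invariant along this path. First, I would note that the locus inside $\M^{\vec{k}}(\Sigma)$ where every boundary element is realized as a cone point of angle exactly $\pi/k_i$ (or a cusp when $k_i=\infty$) is path-connected, so one can pick a continuous family $(X_t)_{t \in [0,1]}$ with $X_0 = X$ and $X_1 = Y$. Each universal cover $\tilde{X}_t$ is $\mathrm{CAT}(-1)$ since all cone angles are at most $\pi < 2\pi$, so the homotopy class $[c]$ has a unique geodesic representative $\gamma_t$ on $X_t$, depending continuously on $t$ in the compact-open topology.

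The plan is then to show that the property ``$\gamma_t$ is properly immersed'' is both open and closed in $t$, hence constant on $[0,1]$. Openness is straightforward: continuity of $\gamma_t$ means that if the interior of $\gamma_{t_0}$ stays at positive distance from the finite set of interior cone points, the same holds for $t$ near $t_0$. For closedness, suppose $\gamma_{t_0}$ breaks at an interior cone point $P$. Then the broken minimizer induces a topological factorization $[c] = [c_1] * [c_2]$ at $P$ into boundary-to-boundary classes, which one interprets in the language of Section \ref{subsec:Primality in full generality}: the orthogeodesic corresponding to $[c]$ sits strictly below another orthogeodesic in the $<_{\vec{k}}$ partial order. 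Because this order is purely combinatorial, the same strict inequality holds on every $X_t$; then the collapse argument from the proof of Proposition \ref{prop:model1}, generalized to arbitrary grading via the identity $\gamma_\mu = [\alpha^{k_\alpha} * \mu * \beta^{k_\beta} * \mu^{-1}]$ and the fact that $\alpha^{k_\alpha}$ realizes an order-two element at the cone point on $M_{\vec{k}}$, forces the geometric realization of this factorization to be length-minimizing on each $X_t$.

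The main obstacle is the generalized collapse computation. In the $\vec{1}$-case treated in the proof of Proposition \ref{prop:model1}, a pair of pants with two $\pi$-cone-angle boundaries degenerates entirely onto its third boundary curve, so non-prime orthogeodesics become forced concatenations of shorter prime ones on $M_{\vec{1}}$. For general $\vec{k}$ one must carry out the analogous hyperbolic trigonometry in pants with cone angles $\pi/k_i$ and check that iterated wrapping $k_i$ times around a cone-point boundary forces a genuine break, so that the topological factorization indeed matches the geometric break structure. This is the step where the specific choice of cone angles equal to $\pi/k_i$ (rather than merely the admissibility inequality $\theta \leq \pi/k_i$) is used in an essential way.
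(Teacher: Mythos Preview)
Your approach is substantially more involved than the paper's, and it runs into a circularity problem. The paper's argument is a single observation: the geodesic representative of $[c]$ on a model surface $M_{\vec k}$ fails to be properly immersed precisely when $[c]$ contains a subloop winding at least $k_i$ times around the $i$-th boundary element, because a $k_i$-fold loop around a cone point of angle $\pi/k_i$ collapses into that point and this is the only way collapse can occur. This winding condition is a purely combinatorial property of the homotopy class $[c]$ and is visibly independent of which model metric one chooses, so the conclusion follows immediately with no deformation argument, no $\mathrm{CAT}(-1)$ input, and no open/closed dichotomy.

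Your closedness step is where the trouble lies. You want to convert a geometric break $\gamma_{t_0}=\gamma_1*\gamma_2$ at a cone point into the statement that $[c]$ sits strictly below some other class in the $<_{\vec k}$ order, and then transport this to every $X_t$ via a collapse computation. But the equivalence ``$[c]$ is not $\vec k$-prime $\Longleftrightarrow$ the geodesic on $M_{\vec k}$ is not properly immersed'' is exactly Proposition~\ref{prop:modelk}, which in the paper's logical order is proved \emph{after} Proposition~\ref{prop:whocares} and uses it. Moreover, a bare factorization $[c]=[c_1]*[c_2]$ at a cone point does not by itself produce a containment $Q_{[c]}\subset Q_{[c']}$ of immersed pants; what it really encodes is that $[c]$ contains a degenerate subloop around that cone point, which is precisely the paper's direct criterion. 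If you unwind your argument to make this step rigorous, you recover the topological winding condition through a longer route, and the continuous-path and openness portions of your sketch become unnecessary: once the breaking criterion is seen to be topological, it transfers from $X$ to $Y$ at once.
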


\begin{proof}
Note that the unique geodesic representative of $[c]$ might not actually be homotopic to $c$, as loops might degenerate in the cone points. More precisely, if $c$ has a sub-loop that winds $k$ times around a boundary of index $k_i$ with $k_i\leq k$, it will degenerate to a geodesic that goes into the corresponding cone point, and then leaves again. In fact that is the only thing that can prevent the geodesic representative from being homotopic to $c$. As this does not depend on the choice of model surface, the result follows.
\end{proof}

We have the same setup as before: orthogeodesics of $X$ correspond to paths between boundary in $\Sigma$ up to homotopy relative to boundary. These paths can be realized on $M_{\vec{k}}$ and have unique geodesic minimizers which may however collapse in the cone point singularities. Those that only pass through cone point singularities in their two end points are again called {\it properly immersed}. In fact, we have the following. 

\begin{proposition}\label{prop:modelk}
An immersed pair of pants or orthogeodesic is $\vec{k}$-prime if and only if the corresponding geodesic on $M_{\vec{k}}$ is properly immersed. 
\end{proposition}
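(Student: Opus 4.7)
The plan is to extend the proof of Proposition \ref{prop:model1} by replacing the inner pants $\tilde{P}_\mu$ with the intermediate pants $\tilde{\tilde{P}}_\mu$ introduced in the setup. The first step is the key observation that on a model surface $M_{\vec{k}}$, the pants $\tilde{\tilde{P}}_\mu$ collapses to the segment $\tilde{\tilde{\mu}}$ in exactly the same way $\tilde{P}_\mu$ collapsed in the proof of Proposition \ref{prop:model1}. This is because the boundary curves $\tilde{\tilde{\alpha}}$ and $\tilde{\tilde{\beta}}$ are $k_\alpha$-fold and $k_\beta$-fold covers of cones of angle $\pi/k_\alpha$ and $\pi/k_\beta$ respectively, so they have total cone angle exactly $\pi$. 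At this critical value, $\tilde{\tilde{P}}_\mu$ degenerates: its interior collapses onto $\tilde{\tilde{\mu}}$ and the third boundary $\tilde{\tilde{\gamma}}_\mu$ becomes the degenerate loop $\tilde{\tilde{\mu}}*\tilde{\tilde{\mu}}^{-1}$. Pushing forward by $\varphi$, the immersed pants $Q_\mu$ on $M_{\vec{k}}$ is precisely the geodesic segment corresponding to $\mu$ running between the two cone points associated to $\alpha$ and $\beta$.

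For the direction ``properly immersed $\Rightarrow$ $\vec{k}$-prime'', I would proceed by contrapositive. If $\mu$ is not $\vec{k}$-prime, there exists $\mu' \in \OO_{\vec{k}}(X)$ with $Q_\mu \subsetneq Q_{\mu'}$, and one can realize $\mu$ as an orthogeodesic of the pants $Q_{\mu'}$ distinct from $\mu'$ itself. The collapse picture above shows that on $M_{\vec{k}}$, $Q_{\mu'}$ is just the segment $\mu'$; therefore $\mu$ (trapped in this collapsed region) is forced to be a concatenation of at least two traversals of $\mu'$ meeting at cone-point vertices. The representative of $\mu$ on $M_{\vec{k}}$ is thus a nontrivial multi-trace of $\mu'$ passing through interior cone points, contradicting proper immersion.

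For the converse, again by contrapositive, suppose the geodesic representative of $\mu$ on $M_{\vec{k}}$ passes through an interior cone point. Decomposing the representative into smooth geodesic arcs meeting at these cone points, the claim is that all the arcs coincide (up to orientation) with a single geodesic segment $\mu'$ between two particular cone points. The reason is that the representative must be trapped in a collapsed immersed pants $Q_{\mu'}$ on $M_{\vec{k}}$, which degenerates to the segment $\mu'$, and so the only paths between cone points of $Q_{\mu'}$ are multi-traces of $\mu'$. The segment $\mu'$ defines an element of $\OO_{\vec{k}}(X)$, and reversing the collapse picture from the first paragraph shows $Q_\mu \subset Q_{\mu'}$, hence $\mu <_{\vec{k}} \mu'$ and $\mu$ is not $\vec{k}$-prime.

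The main obstacle will be the converse direction, specifically identifying the segment $\mu'$ on which the multi-trace is based and verifying that all smooth arcs of the concatenation coincide with it. Rigorously justifying this requires combining the angle-at-cone-point analysis used in the proof of Proposition \ref{prop:naturalk} (exploiting that cone angles on $M_{\vec{k}}$ are all at most $\pi$) with an explicit identification of $\mu'$ from the orbifold homotopy class of $\mu$, together with a topological verification that no other concatenation structure is possible.
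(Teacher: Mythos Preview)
Your proposal follows essentially the same route as the paper: both hinge on the observation that on $M_{\vec{k}}$ the pre-immersed pair of pants $\tilde{\tilde P}_\mu$ collapses to the segment $\tilde{\tilde\mu}$ (because its two cuff cone angles become exactly $\pi$), and both deduce that anything strictly below a given orthogeodesic in the partial order becomes a nontrivial concatenation on $M_{\vec k}$. Your ``properly immersed $\Rightarrow$ $\vec k$-prime'' direction is exactly the paper's argument with the roles of the two orthogeodesics swapped.

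Where you go further than the paper is on the converse. The paper's proof is in fact quite elliptical there: after showing that $\eta<_{\vec k}\mu$ forces $\eta$ to be a concatenation of copies of $\mu$, it writes ``and thus it is not $\vec k$-prime. This proves the proposition,'' which at face value just restates the hypothesis. The paper seems to regard the converse as implicit in the collapse picture and the uniqueness of minimizers (it immediately invokes this to derive Proposition~\ref{prop:uniqueprimek}). Your proposal is more honest in isolating it as a separate step.

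One comment on your plan for that step: the reasoning ``the representative must be trapped in a collapsed $Q_{\mu'}$'' is circular as written, since $\mu'$ is what you are trying to identify. The clean way through is a local observation you already gesture at: at a cone point of angle $\pi/k$, a geodesic minimizer can only pass through the apex by \emph{backtracking} --- unrolling the cone, the incoming and outgoing radial directions coincide modulo $\pi/k$. Hence once the representative of $\mu$ hits an interior cone point, the next arc is forced to be the reverse of the previous one, and inductively every arc equals $\pm\mu'$ where $\mu'$ is the very first arc. This simultaneously pins down $\mu'$ and rules out any other concatenation structure; the containment $Q_\mu\subset Q_{\mu'}$ then follows by lifting the back-and-forth word to $\tilde{\tilde P}_{\mu'}$.
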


\begin{proof}
As before, the main point is to observe what happens from the immersed pair of pants point of view. 

Consider an orthogeodesic $\mu$ which has end points on two boundary elements $\alpha$ and $\beta$, with indices $k_\alpha$ and $k_\beta$. We want to show that on the model surface $M_{\vec{k}}$, the subsurface of bounded by $\alpha$, $\beta$ and the geodesic $\gamma_\mu$ degenerates to a geodesic. To show this we argue geometrically. 

Using the immersed pair of pants point of view, we consider a pair of pants with two boundary cone points of angles $\theta_\alpha < \frac{2\pi}{k_\alpha+1}$ and $\theta_\beta < \frac{2\pi}{k_\beta+1}$ and a third boundary element a simple closed geodesic $\tilde{\gamma}$. The curve $\tilde{\gamma}_{\mu}$ is as defined above that is in the homotopy class 
$
 [\tilde{\alpha}^{k_\alpha}*\tilde{\mu}*\tilde{\beta}^{k_\beta}*\tilde{\mu}^{-1}].
$
We now consider the limiting pair of pants obtained by letting the angles $\theta_\alpha$ and $\theta_\beta$ increase until $\frac{\pi}{k_\alpha}$ and $\frac{\pi}{k_\beta}$, while maintaining the length of $\gamma$ fixed. We now focus on the limit of the curve $\tilde{\gamma}_{\mu}$. For the same reasons as in Proposition \ref{prop:whocares}, it limits to a geodesic segment (between the two boundaries not equal to $\gamma$). The length of $\tilde{\gamma}_{\mu}$ limits to twice the length of the limiting segment.

With the point of view of immersed pair of pants on $X$, consider $\mu$ as the image by $\varphi\circ \psi$ of $\tilde{\tilde{\mu}} \subset \tilde{\tilde{P}}_{\mu}$. Now if $\eta <_{\vec{k}} \mu$ then that $\eta$ can be realized as the image by $\varphi\circ \psi$ of an orthogeodesic $\tilde{\tilde{\eta}} \subset \tilde{\tilde{P}}_{\mu}$.

When we look at this setup for orthogeodesics on $M_{\vec{k}}$, then $\tilde{\tilde{P}}_\mu$ is has collapsed to a single geodesic segment. That is, on $M_{\vec{k}}$, $Q_\mu =\mu$. As such $\eta$ can be nothing else but a concatenation of at least two copies of $\mu$ and thus it is not $\vec{k}$-prime. This proves the proposition.
\end{proof}

With this point of view, as before, Proposition \ref{prop:uniqueprimek} now follows from the uniqueness of geodesic minimizers. 

\begin{remark}\label{rem:concaveprime}
By combining the model surface point of view, and the dynamic interpretation of Proposition \ref{prop:naturalk}, there is a third possible interpretation of primality: 

{\it An orthogeodesic $\mu$ is $\vec{k}$-prime if and only if $\gamma_\mu$ is contained in the $V_{\vec{k}}(X)$ concave core.}

As we don't use this point of view, we omit the details of the proof, but essentially it goes as follows. Using the model surface point of view, it is clear that for a $\vec{k}$-prime orthogeodesic $\mu$ between $\alpha$ and $\beta$, $\gamma_\mu$ never enters the concerned natural collars of all boundary elements different from $\alpha$ and $\beta$. And a local argument (which is perhaps easier to see using the preimmersed pair of pants point of view) ensures it does not enter the natural collars of $\alpha$ and $\beta$. So $\gamma_\mu$ is contained in the concave core. Now if we have $\mu$ which is not $\vec{k}$-prime, $\gamma_\mu$ is entirely contained inside an immersed pair of pants associated to a $\vec{k}$-prime orthogeodesic between boundary curves $\alpha'$ and $\beta'$. Another local argument shows that $\gamma_\mu$ must intersect the appropriate natural collar of either $\alpha'$ or $\beta'$, and hence $\gamma_\mu$ is not contained in the $\vec{k}$ concave core. 
\end{remark}

\subsection{A prime exhaustion of $\OO(X)$}

Take an element $\mu \in \OO_{\vec{k}}(X)$ and suppose that it is not prime. Let $\mu'$ be the unique $\vec{k}$-prime such that $\mu <_{\vec{k}} \mu'$ (by Proposition \ref{prop:uniqueprimek}). The endpoints of $\mu'$ have grades $i_0$ and $i_1$ (not necessarily distinct). Let $\vec{k}^{+}$ be the element obtained by taking $\vec{k}$ by replacing $k_{i_0}$ with $k_{i_0}+1$ and $k_{i_1}$ with $k_{i_1}+1$ (and if $i_0=i_1$, only replace $k_{i_0}$ with $k_{i_0}+1$). Depending on the geometry of $X$, $V_{\vec{k}^{+}}(X)$ may or may not exist. If it does, ithe following proposition holds.

\begin{proposition}
The element $\mu\in \OO_{\vec{k}}(X)$ is $\vec{k}^{+}$-prime.
\end{proposition}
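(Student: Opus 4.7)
The plan is to apply Proposition \ref{prop:modelk} at the grading $\vec{k}^+$: this reduces the claim to showing that the geodesic representative of $\mu$ on the model surface $M_{\vec{k}^+}$ is properly immersed. Since $\vec{k}\leq \vec{k}^+$, we have $\mu \in \OO_{\vec{k}}(X) \subseteq \OO_{\vec{k}^+}(X)$, so $\mu$ is a valid candidate in the $\vec{k}^+$ setting.

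The argument proceeds boundary by boundary. For an index $i \notin \{i_0, i_1\}$, the cone angle at $\delta_i$ on $M_{\vec{k}^+}$ agrees with the cone angle on $M_{\vec{k}}$; since $\mu \in \OO_{\vec{k}}(X)$, the sub-loop characterization used in the proof of Proposition \ref{prop:whocares} ensures that $\mu$'s representative does not collapse through $\delta_i$. The decisive step is at the boundaries $\delta_{i_0}$ and $\delta_{i_1}$, where the cone angles strictly drop from $\pi/k_{i_0}$ and $\pi/k_{i_1}$ to $\pi/(k_{i_0}+1)$ and $\pi/(k_{i_1}+1)$. Here the uniqueness of $\mu'$ from Proposition \ref{prop:uniqueprimek} plays the key role: since $\mu$ sits inside the immersed pair of pants $Q_{\mu'}$, the topology of $Q_{\mu'}$ confines the winding of $\mu$'s homotopy class around $\alpha'=\delta_{i_0}$ and $\beta'=\delta_{i_1}$ by $k_{i_0}$ and $k_{i_1}$ respectively. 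Combined with the reduced cone angles on $M_{\vec{k}^+}$, the angular turn of the corresponding sub-loops at $\delta_{i_0}$ and $\delta_{i_1}$ stays strictly below $\pi$, exactly the threshold needed so that $\mu$'s representative remains properly immersed at these cone points.

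The hardest part is this quantitative winding control at $\alpha'$ and $\beta'$. The cleanest route is through the preimmersed pair of pants framework $\tilde{\tilde{P}}_{\mu'}\to \tilde{P}_{\mu'}\to X$ introduced in Section \ref{subsec:Primality in full generality}: the orthogeodesic $\mu$ lifts to an orthogeodesic $\tilde{\tilde{\mu}}$ in the big pair of pants $\tilde{\tilde{P}}_{\mu'}$, and the boundaries $\tilde{\tilde{\alpha}}$ and $\tilde{\tilde{\beta}}$ wrap $k_{i_0}$ and $k_{i_1}$ times onto $\alpha'$ and $\beta'$ under $\psi$. This forces the total wrap of $\mu$ around $\alpha'$ (respectively $\beta'$) to be at most $k_{i_0}$ (respectively $k_{i_1}$), and feeding these bounds back into the model surface collapse analysis from the proof of Proposition \ref{prop:modelk} at grading $\vec{k}^+$ concludes the argument.
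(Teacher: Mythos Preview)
Your overall plan is the same as the paper's: reduce via Proposition~\ref{prop:modelk} to showing that $\mu$ is properly immersed on $M_{\vec{k}^+}$, and then check cone point by cone point using the sub-loop/winding criterion. The treatment of the indices $i\notin\{i_0,i_1\}$ is fine. The problem is the argument you give for $i_0$ and $i_1$.

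You claim that lifting $\mu$ to $\tilde{\tilde{P}}_{\mu'}$ forces the wrap of $\mu$ around $\alpha'=\delta_{i_0}$ to be at most $k_{i_0}$, because $\tilde{\tilde{\alpha}}$ covers $\alpha'$ with degree $k_{i_0}$. This inference goes the wrong way. An arbitrary orthogeodesic $\tilde{\tilde{\mu}}$ in $\tilde{\tilde{P}}_{\mu'}$ can itself wind $m$ times around $\tilde{\tilde{\alpha}}$ for any $m$, and under $\varphi\circ\psi$ each such wind becomes $k_{i_0}$ winds around $\alpha'$; so the containment in $Q_{\mu'}$ alone gives no upper bound at all on the winding of $\mu$ around $\alpha'$. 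The ``topology of $Q_{\mu'}$'' does not confine this quantity.

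The fix is much simpler than the route you chose, and it is exactly what the paper does: use the hypothesis $\mu\in\OO_{\vec{k}}(X)$ at $i_0$ and $i_1$ as well. By definition the middle portion of $\mu$ stays in $V_{\vec{k}}(X)$, hence never enters $\CC_{k_{i_0}}(\alpha')$ or $\CC_{k_{i_1}}(\beta')$, so by Proposition~\ref{prop:naturalk} it carries no sub-loop winding $k_{i_0}$ (resp.\ $k_{i_1}$) times around $\alpha'$ (resp.\ $\beta'$). Since the cone angles on $M_{\vec{k}^+}$ at these points are $\pi/(k_{i_0}+1)$ and $\pi/(k_{i_1}+1)$, collapse there would require winding at least $k_{i_0}+1$ (resp.\ $k_{i_1}+1$), which is excluded. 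This is the same reasoning you correctly applied for $i\notin\{i_0,i_1\}$; there was no need to switch to the preimmersed pair of pants for the remaining two indices. In the paper's proof, $Q_{\mu'}$ is invoked only to explain \emph{where} the collapse of $\mu$ on $M_{\vec{k}}$ occurs (namely at $i_0,i_1$), while the actual winding bound still comes from $\mu\in\OO_{\vec{k}}(X)$.
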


\begin{proof}
Consider the immersed pair of pants $Q_{\mu'} \in \PP_{\vec{k}}(X)$ in which $\mu$ is also contained. As $\mu$ is in $\OO _{\vec{k}}(X)$, it never wraps more than respectively $k_{i_0}-1$ and $k_{i_1}-1$ times around the corresponding boundary elements of $Q_{\mu'}$. As $\mu'$ is $\vec{k}$-prime, $Q_{\mu'}$ corresponds to a geodesic segment on $M_{\vec{k}}$, and $\mu$, as it is not $\vec{k}$-prime, corresponds to multiple copies of this geodesic segment which comes from the fact that it must wrap exactly $k_{i_0}-1$ or $k_{i_1}-1$ times around the corresponding boundary elements. However on $M_{\vec{k}^{+}}$, the angle has been decreased in the two corresponding cone points and hence $\mu$ now corresponds to a genuine properly immersed geodesic segment and is thus $\vec{k}^{+}$-prime.
\end{proof}

This implies a number of immediate corollaries.

We obtain the following weaker statement, strictly equivalent when there is only one boundary component, provided $V_{\vec{k}+ \vec{1}}(X)$ exists.

\begin{corollary}\label{cor:k+1}
An element of $\OO_{\vec{k}}(X)$ is always $(\vec{k} + \vec{1})$-prime.
\end{corollary}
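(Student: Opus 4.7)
The plan is to reduce the corollary to the preceding proposition via a monotonicity principle: if $\mu$ is $\vec{l}$-prime and $\vec{l} \leq \vec{m}$ componentwise (with both gradings admissible for $X$), then $\mu$ is $\vec{m}$-prime. Granting monotonicity, the corollary splits into two cases. If $\mu \in \OO_{\vec{k}}(X)$ is already $\vec{k}$-prime, the conclusion is immediate from $\vec{k} \leq \vec{k} + \vec{1}$. Otherwise, the preceding proposition produces the grading $\vec{k}^+$, obtained from $\vec{k}$ by bumping by one only the two grades corresponding to the boundary endpoints of the unique $\vec{k}$-prime $\mu'$ with $\mu <_{\vec{k}} \mu'$, and asserts that $\mu$ is $\vec{k}^+$-prime. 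Since $\vec{k}^+ \leq \vec{k} + \vec{1}$ componentwise, monotonicity then upgrades $\vec{k}^+$-primality to $(\vec{k}+\vec{1})$-primality, which is the desired conclusion.

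To establish monotonicity, I would invoke Proposition \ref{prop:modelk} to rephrase primality as proper immersion of the geodesic minimizer on the model surface. The key input is the collapse criterion underlying Propositions \ref{prop:naturalk} and \ref{prop:whocares}: a sub-loop of the homotopy class winding $k$ times around the $i$-th boundary degenerates into the corresponding cone point of $M_{\vec{l}}$ precisely when $k \geq l_i$. Passing from $\vec{l}$ to $\vec{m} \geq \vec{l}$ only raises this threshold coordinate by coordinate, so any sub-loop that fails to collapse on $M_{\vec{l}}$ still fails to collapse on $M_{\vec{m}}$; hence the geodesic minimizer on $M_{\vec{m}}$ still meets cone points only at its endpoints, and $\mu$ is $\vec{m}$-prime. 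The main obstacle is simply keeping the direction of the monotonicity straight: shrinking the cone angles (that is, raising the grades) makes degeneration into cone points harder, not easier, so an already properly immersed minimizer remains properly immersed. Once this is in hand, the corollary is a short logical deduction from the preceding proposition with no additional hyperbolic geometry required.
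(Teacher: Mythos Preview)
Your proposal is correct and follows the same route the paper intends: the paper simply declares the corollary a ``weaker statement'' of the preceding proposition, and your argument spells out precisely the implicit step needed, namely the monotonicity of primality in the grading (via Proposition~\ref{prop:modelk} and the collapse criterion from Proposition~\ref{prop:whocares}), together with the case split according to whether $\mu$ is already $\vec{k}$-prime. There is no substantive difference in approach.
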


Now suppose that $X$ only has cusps and geodesics as boundaries. In this case, the sets $\OO_{\vec{k}}(X)$ with $\vec{k}$ finite form an exhaustion of $\OO(X)$, and hence we also have the following. 

\begin{corollary}
The sets $\OO^{\prime} _{\vec{k}}(X)$ for $\vec{k}$ finite form an exhaustion of $\OO (X)$.
\end{corollary}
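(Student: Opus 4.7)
The plan is a direct application of Corollary \ref{cor:k+1} to the already-established exhaustion of $\OO(X)$ by the sets $\OO_{\vec{k}}(X)$ for $\vec{k}$ finite. Since $X$ has only geodesic and cusp boundaries, no cone-angle obstruction to defining natural collars can occur, and $V_{\vec{k}+\vec{1}}(X)$ exists for every finite $\vec{k}$, so Corollary \ref{cor:k+1} applies without restriction.

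Concretely, I would fix an arbitrary $\mu \in \OO(X)$ and invoke the preceding exhaustion statement to produce a finite grading $\vec{k}$ with $\mu \in \OO_{\vec{k}}(X)$. Explicitly, one may take $k_i$ to be one more than the maximum number of times $\mu$ wraps around the $i$-th boundary component; this maximum is finite since $\mu$ is a single orthogeodesic, so by Proposition \ref{prop:naturalk} the orthogeodesic avoids $\CC_{k_i}(\delta_i)$ (outside of its endpoints) for each $i$, which is exactly the condition $\mu\in\OO_{\vec{k}}(X)$. Then Corollary \ref{cor:k+1} asserts that $\mu$ is $(\vec{k}+\vec{1})$-prime, so $\mu \in \OO'_{\vec{k}+\vec{1}}(X)$, and $\vec{k}+\vec{1}$ is still finite.

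Combined with the obvious reverse inclusion $\OO'_{\vec{k}}(X)\subseteq \OO_{\vec{k}}(X)\subseteq\OO(X)$ for every finite $\vec{k}$, this yields
$$
\OO(X)=\bigcup_{\vec{k}\text{ finite}}\OO'_{\vec{k}}(X),
$$
which is the claimed exhaustion. There is no genuine obstacle here: the corollary is a one-line packaging of Corollary \ref{cor:k+1} together with the previously noted exhaustion of $\OO(X)$ by the $\OO_{\vec{k}}(X)$, and the only verification required is that the grade-shift $\vec{k}\mapsto\vec{k}+\vec{1}$ preserves finiteness, which is immediate.
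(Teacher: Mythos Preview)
Your proposal is correct and follows essentially the same approach as the paper: pick any $\mu\in\OO(X)$, find a finite $\vec{k}$ with $\mu\in\OO_{\vec{k}}(X)$ (the paper just says this is because an orthogeodesic cannot go arbitrarily deep into collars, while you make the choice of $k_i$ explicit), and then apply Corollary~\ref{cor:k+1} to conclude $\mu$ is $(\vec{k}+\vec{1})$-prime. Your additional remarks about the reverse inclusion and the existence of $V_{\vec{k}+\vec{1}}(X)$ are correct but not in the paper's version.
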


\begin{proof}
Given any element $\mu \in \OO(X)$, there exists a finite $\vec{k}$ for which $\mu \in \OO_{\vec{k}}(X)$. This is because, with the exception of its boundary ends, an orthogeodesic cannot go arbitrarily deep inside the collar of a boundary element. Now by Corollary \ref{cor:k+1}, $\mu$ is $(\vec{k} + \vec{1})$-prime. 
\end{proof}

\begin{remark} There is a way of making sense of an exhaustion if $X$ has cone points as follows. Such an $X$ has a maximal concave core, with grades $\vec{k}_{\max}$. In fact the above argument shows that the sets $\OO^{\prime} _{\vec{k}}(X)$ for an exhaustion of $\OO_{\vec{k}_{\max}}(X)$.
\end{remark}

\section{Proof of the abstract identity}\label{sec:ergodicity}

In this section, we prove the abstract identity. The basic set-up is as follows: Suppose that $X$ is a cone hyperbolic surface and $\vec {k}$ is a grading on $\Sigma$ which is admissible for $X$. Let $\alpha$ be a boundary component of $V:=V_{\vec{k}}(X)$ with positive measure (length). To every prime orthogeodesic $\mu \in \OO_{\vec {k}}(X)$ starting at $\alpha$ and ending at any boundary component $\beta$ (not necessarily distinct from $\alpha$), one can associate a gap in $\alpha$ containing the initial point of $\mu$ with the property that an orthoray starts at a point in this gap if and only if it stays in the pre-immersed pair of pants $P_{\vec{k}}(\mu)$ for infinite time, or is a finite arc contained in $P_{\vec{k}}(\mu)$ which ends at $\alpha$ or $\beta$ (see Claim 1 in Section 5.2). In the case where $\beta=\alpha$, there are actually two gaps, since there are two possible orientations $\mu$ and $-\mu$ which both start at $\alpha$, and which have the same pre-immersed pair of pants. We will show that these gaps are disjoint and the complementary set has measure zero. The first observation is that if an orthoray starts at a point in the complementary set, then it stays in $V$ for infinite time. Otherwise, it intersects a boundary component of $V$ in finite time, so is homotopic to some $\mu \in \OO_{\vec{k}}(X)$, and hence must lie in the gap associated to a prime orthogeodesic $\mu'$ where $\mu'\in \OO_{\vec{k}}'(X)$ is such that $\mu <\mu'$. We show in the next section that the set of initial points on $\alpha$ for orthorays which stay in $V$ for infinite time has measure zero, hence the measure of the complementary set is zero.

\subsection{Orthorays and their boundary measure}

We give a self-contained, direct proof that if $V$ has at least one boundary component with positive measure, the boundary measure of the set of orthorays that stay inside the concave core is zero. 

 Let $\alpha$ be a boundary component of $V$ with length $\lambda_{\alpha}>0$ (see remark 5.4(i) for the case where $\alpha$ is a cusp or orbifold cone point, and $\lambda_{\alpha}=0$). Identify the set of orthorays with initial point on $\alpha$ with $\alpha$. We are interested in the set $\Lambda \subset \alpha$ of orthorays that start in $\alpha$ and stay in $V$ for all positive time.

Let $\epsilon >0$ small. Consider an orthoray $\rho$ in $\Lambda$ and a fixed lift $\tilde{\rho}$ of $\rho$ to $\Bbb H$. We consider the set of geodesic rays based in $\alpha$ whose initial point is a distance less than $\epsilon$ from $\rho (0)$ along $\alpha$ which have a lift asymptotic to $\tilde{\rho}$ (see Figure \ref{fig:measure1}).

\begin{figure}[H]
\leavevmode \SetLabels
\L(.51*0.78) $\tilde{\rho}$\\
\L(.75*0.81) $\tilde{\alpha}$\\
\L(.3681*0.08) $0$\\%
\L(.6257*0.08) $1$\\%
\L(.53*0.92) $\epsilon$\\%
\L(.465*0.92) $\epsilon$\\%
\endSetLabels
\begin{center}
\AffixLabels{\centerline{\includegraphics[width=14cm]{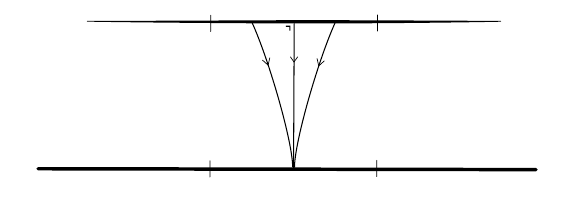}}}
\vspace{-30pt}
\end{center}
\caption{The lift of $\rho$ and its neighbors with the same limit point}
\label{fig:measure1}
\end{figure}

We denote this set of geodesic rays by $\Lambda_{\rho,\epsilon}$ and we set $\Lambda_{\epsilon}=\cup_{\rho \in \Lambda} \Lambda_{\rho, \epsilon}$.

 While the geodesic rays in the set $\Lambda_{\epsilon}$ do not necessarily stay in $V$ they do stay within a distance 
$\epsilon$ of $V$. This follows from the fact that each geodesic ray fellow travels some orthoray 
$\rho$ which stays in $V$. For $\epsilon$ small enough the geodesic rays in $\Lambda_{\epsilon}$ extend backward from the boundary component $\alpha$ to the curve a distance $\epsilon$ from 
$\alpha$. In this way the forward orbit of one of the geodesic rays can not join up with the backward orbit of another one. As a result we have:

\begin{lemma} \label{lem: distinct orthorays}
The unit tangent vectors to the geodesic rays in 
$\Lambda_{\epsilon}$ are distinct. 
\end{lemma}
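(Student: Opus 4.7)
The plan is as follows. The statement is really about the map $\Lambda_\epsilon \to T^1 X$ sending each ray to its initial unit tangent vector; since a geodesic ray in a hyperbolic surface is determined by any one of its unit tangent vectors, two rays sharing an initial tangent vector must coincide as rays on $X$. Within a fixed $\Lambda_{\rho,\epsilon}$ this map is clearly injective because different rays have different basepoints on $\alpha$ (hence different tangent vectors in $T^1 X$), so the content of the lemma is that no ray simultaneously belongs to $\Lambda_{\rho_1,\epsilon}$ and $\Lambda_{\rho_2,\epsilon}$ with $\rho_1\ne\rho_2$.

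I would prove this in the universal cover $\Hyp$. For $i=1,2$, let $\tilde\sigma^{(i)}$ be the lift of $\sigma$ asymptotic to the fixed lift $\tilde\rho_i$; by definition of $\Lambda_{\rho_i,\epsilon}$, $\tilde\sigma^{(i)}(0)$ sits on the lift $\tilde\alpha_i$ of $\alpha$ containing $\tilde\rho_i(0)$, at distance less than $\epsilon$ from $\tilde\rho_i(0)$. Since $\tilde\sigma^{(1)}$ and $\tilde\sigma^{(2)}$ cover the same ray $\sigma$, there is a deck transformation $g$ with $\tilde\sigma^{(2)}=g\tilde\sigma^{(1)}$. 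Then $\tilde\sigma^{(2)}(0)\in g\tilde\alpha_1\cap\tilde\alpha_2$, and the disjointness of distinct lifts of the simple closed curve $\alpha$ forces $g\tilde\alpha_1=\tilde\alpha_2$. The same $g$ sends the forward ideal endpoint $\xi_1$ of $\tilde\rho_1$ to the endpoint $\xi_2$ of $\tilde\rho_2$. Hence $g\tilde\rho_1$ and $\tilde\rho_2$ are two geodesic rays orthogonal to the same lift $\tilde\alpha_2$ and sharing the boundary point $\xi_2$; the uniqueness of the perpendicular from an ideal point to a given geodesic of $\Hyp$ then forces $g\tilde\rho_1=\tilde\rho_2$, and projecting gives $\rho_1=\rho_2$.

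The main technical delicacy is that $\alpha$ need not be a geodesic: it can also be a horocycle when the underlying boundary element of $X$ is a cusp, or a small circle about a cone point. In both cases the ``perpendicular'' should be reinterpreted as ``orthogonal to the equidistant curve,'' and the uniqueness statement survives because the geodesic from $\xi_2$ orthogonal to a given horocycle (or to a given circle about a lift of the cone point) remains unique once that horocycle or circle has been identified among its translates. The smallness of $\epsilon$ is used to guarantee that the backward extension of each ray actually lies in the collar on the other side of $\alpha$ and not beyond it, so that the configuration of lifts in the argument above is geometrically clean and the disjointness argument applies without subtlety; I expect this smallness bound, together with the uniqueness of a perpendicular from an ideal point, to be the only substantive step.
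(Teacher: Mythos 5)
The lemma is not about initial tangent vectors: it asserts that no two geodesic rays in $\Lambda_\epsilon$ share \emph{any} unit tangent vector along their flows. This is precisely what the subsequent computation of $\text{Vol}(\Gamma_\delta)$ needs, since $\Gamma_\delta$ is defined as the set of vectors swept out by flowing each ray for time up to $\delta$, and the displayed integral only gives a valid lower bound for $\text{Vol}(\Gamma_\delta)$ in $T^1 X$ if the projection from $T^1\Hyp$ of the chosen lifts of those segments is injective, i.e.\ if no two segments ever meet in $T^1 X$. The paper's own one-line justification --- that a forward orbit of one ray cannot join up with the backward orbit of another, using the backward extension into the $\epsilon$-equidistant curve --- is addressing exactly this possibility of coincidence at \emph{later} times $t_1\ne t_2$.

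Your argument proves a genuinely different and weaker statement, namely that the sets $\Lambda_{\rho_1,\epsilon}$ and $\Lambda_{\rho_2,\epsilon}$ are disjoint for $\rho_1\ne\rho_2$. In the framework above, that is the case $t_1=t_2=0$: the deck transformation $g$ carrying one lift to the other fixes the crossing point with the lift of $\alpha$ at time $0$, whence $g\tilde\alpha_1 = \tilde\alpha_2$ and the uniqueness of the perpendicular from $\xi_2$. But if $\sigma_1'(t_1)=\sigma_2'(t_2)$ with $t_1>t_2\ge 0$, then $g\tilde\sigma_1(0)=\tilde\sigma_2(t_2-t_1)$ lies on a different lift of $\alpha$ than $\tilde\sigma_2(0)$ does, and your disjointness-of-lifts step no longer applies: the two lifts of $\alpha$ are distinct, so you cannot conclude $g\tilde\alpha_1=\tilde\alpha_2$. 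Ruling out this case is the actual content of the lemma; it requires the smallness of $\epsilon$ in a substantive way (so that a geodesic entering the backward $\epsilon$-collar of $\alpha$ cannot cross a second lift of $\alpha$ while remaining within $\epsilon$ of $V$), whereas in your write-up the smallness of $\epsilon$ is only invoked to keep the picture ``geometrically clean.'' To repair the proof you would need to treat the $t_1\ne t_2$ case along the lines of the paper's backward-extension argument.
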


For $\delta>0$, denote the set of unit tangent vectors to the geodesic rays in $\Lambda_{\epsilon}$ that flow a distance at most $\delta$ from their initial tangent vector by 
 $\Gamma_{\delta}$ (see Figure \ref{fig:measure2})).

\begin{figure}[H]
\leavevmode \SetLabels
\L(.661*0.593) $\rho$\\%
\L(.6257*0.2) $V$\\%
\L(.335*0.535) $\epsilon$\\%
\L(.342*0.476) $\epsilon$\\%
\L(.337*0.42) $\epsilon$\\%
\L(.36*0.58) $\beta$\\%
\endSetLabels
\begin{center}
\AffixLabels{\centerline{\includegraphics[width=10cm]{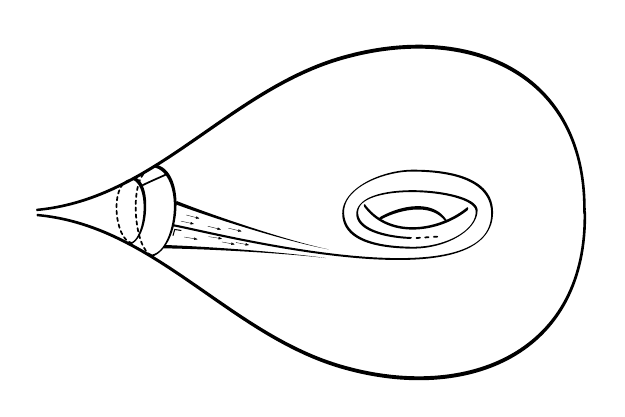}}}
\vspace{-30pt}
\end{center}
\caption{Geodesic rays in $\Lambda_{\rho,\epsilon}$}
\label{fig:measure2}
\end{figure}
We have:
\begin{lemma} \label{lem: measure inequality} Let $\alpha$ be a boundary component of $V$ with $\lambda_{\alpha}>0$. There exists a constant $c>0$ that only depends on the geometry of $X$ so that for any 
$\delta >0$,
\begin{equation}
\text{Vol}(\Gamma_{\delta}) \geq \delta c \lambda_{\alpha}(\Lambda).
\end{equation}

\end{lemma}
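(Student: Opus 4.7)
My plan is to parametrize $\Gamma_\delta$ using initial data on $\alpha$ together with flow time, and to estimate its Liouville volume by a Fubini/Jacobian argument. Let $U_\alpha\subset T^1X$ denote the set of inward-pointing unit tangent vectors based on $\alpha$, with coordinates $(\ell,\theta)$ (arclength along $\alpha$ and angle with $\alpha$). In the coordinates $(\ell,\theta,t)$ induced by the geodesic flow $\phi_t$, the Liouville measure on $T^1X$ takes the form $\sin\theta\,d\ell\,d\theta\,dt$. Consequently, for any measurable $A\subset U_\alpha$ on which $\phi_{[0,\delta]}$ is injective,
$$
\mathrm{Vol}(\phi_{[0,\delta]}(A))=\delta\int_A \sin\theta\,d\ell\,d\theta.
$$

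I would then parametrize $\Lambda_\epsilon$ via the map $\Psi:\Lambda\times[-\epsilon,\epsilon]\to U_\alpha$ sending $(y,s)$ to the initial tangent vector of the unique ray in $\Lambda_{\rho_y,\epsilon}$ based at the point of $\alpha$ at signed distance $s$ from $y$ (where $\rho_y$ is the orthoray with initial point $y$). Lemma \ref{lem: distinct orthorays} is precisely the statement that $\Psi$ is injective with image $\Lambda_\epsilon$. Writing $\theta=\theta_y(s)$ for the angle of this ray at its footpoint, we have $\ell(y,s)=y+s$, and $\theta_y(0)=\pi/2$ for every $y\in\Lambda$ since the orthoray $\rho_y$ is perpendicular to $\alpha$. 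Hence the Jacobian of $\Psi$ equals $|\partial_s\theta_y(s)-\partial_y\theta_y(s)|$, which at $s=0$ reduces to $|\partial_s\theta_y(0)|$.

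A direct computation in the upper half-plane (placing $\tilde\alpha$ as a vertical line and applying Gauss--Bonnet to the ideal triangle with vertices $\tilde y$, $\tilde y_s$, and $\xi$, where $\xi$ is the endpoint at infinity of $\tilde\rho_y$) shows $|\partial_s\theta_y(0)|=1$. One can also see this from the homogeneity of $\Hyp$: its isometry group acts transitively on configurations of a geodesic, a basepoint and a perpendicular direction, so the rate is a universal positive constant, independent of $y$. By continuity, for $\epsilon$ small (depending only on the geometry of $X$), $|\mathrm{Jac}\,\Psi|\geq 1/2$ and $\sin\theta\geq 1/2$ on $\Lambda\times[-\epsilon,\epsilon]$. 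Combining the two estimates,
$$
\mathrm{Vol}(\Gamma_\delta)=\delta\int_{\Lambda\times[-\epsilon,\epsilon]}\sin\theta(\Psi(y,s))\,|\mathrm{Jac}\,\Psi(y,s)|\,dy\,ds\geq \delta\cdot\tfrac{1}{2}\cdot\tfrac{1}{2}\cdot 2\epsilon\cdot\lambda_\alpha(\Lambda)=\tfrac{\epsilon}{2}\,\delta\,\lambda_\alpha(\Lambda),
$$
which yields the lemma with $c=\epsilon/2$.

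The main obstacle I anticipate is a bookkeeping one: checking that the flow $\phi_{[0,\delta]}$ is genuinely injective on $\Lambda_\epsilon$ (so that the Liouville integral counts $\mathrm{Vol}(\Gamma_\delta)$ without multiplicity). This follows from the fact that every ray in $\Lambda_\epsilon$ fellow-travels some $\rho_y$ that stays in $V$ forever, hence the ray remains within distance $\epsilon$ of $V$ and cannot cross $\alpha$ transversally again for $\epsilon$ small; any residual overlap can in any case be absorbed by passing to an injective sub-domain, which only strengthens the lower bound. The geometric estimates themselves are easy thanks to the hyperbolic homogeneity argument.
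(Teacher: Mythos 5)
Your approach is correct in spirit and reaches the same conclusion, but via a genuinely different route through the Liouville measure. The paper parametrizes oriented geodesics by their pair of endpoints $(x,y)\in\widehat{\R}^2$ with Liouville density $dx\,dy/(x-y)^2$, normalizes the lift $\tilde\alpha$ so that the orthoray endpoints sit in $[0,1]$, and then simply observes that $\Lambda_\epsilon$ \emph{contains a product set} $A\times[a,\infty)$, so that Fubini gives the lower bound directly with the explicit constant $c=\epsilon/(1+\epsilon+\epsilon^2)$. The cone-point and geodesic-boundary cases are then reduced to this one by a bounded-distortion projection to $\R$. You instead work in footpoint/angle/time coordinates $(\ell,\theta,t)$ with density $\sin\theta\,d\ell\,d\theta\,dt$, parametrize $\Lambda_\epsilon$ by $\Psi:\Lambda\times[-\epsilon,\epsilon]\to U_\alpha$, and estimate the Jacobian. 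What the paper's product-set device buys is that it sidesteps any change-of-variables regularity issues: there is no map whose Jacobian you need to control, only an explicit integral over a rectangle. What your Jacobian argument buys is uniformity across the three boundary types (the stabilizer-transitivity observation), avoiding the separate normalization and distortion step.

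Two points deserve more care in your write-up. First, to apply the area formula and conclude $|\mathrm{Jac}\,\Psi|\geq 1/2$ on all of $\Lambda\times[-\epsilon,\epsilon]$ (not merely at $s=0$), you must control $\partial_y\theta_y(s)$ for $s\neq 0$, which depends on the regularity of the perpendicular-foot map $y\mapsto\xi_y$. This does hold — the map is a smooth function of the footpoint on a lift of $\tilde\alpha$ — but your argument only establishes $\partial_y\theta_y(0)=0$ and appeals to ``continuity'' without naming the quantity that needs to be continuous. Second, your claimed value $|\partial_s\theta_y(0)|=1$ is specific to the geodesic boundary case; for a horocycle boundary the rate is $2$, and for a cone circle it is yet another constant. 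This does not affect the argument (you only use positivity and uniformity), but the Gauss--Bonnet ``ideal triangle'' computation you gesture at does not literally apply when $\tilde\alpha$ is not a geodesic. Stating the constant as an unspecified $c_0>0$ depending on the type and geometry of $\alpha$ would be more accurate.
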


\begin{proof} 
 We use the upper half-plane model and parametrize the set of oriented geodesics by $\{(x,y): x,y \in \widehat{\Bbb{R}}, x\neq y \}$, where $x$ and $y$ are the terminal and initial endpoints, respectively, of the geodesic. 
Consider the set of geodesics $\{(x,y): x \in A, y \in B \}$, where $A$ and $B$ are disjoint measurable subsets of $\Bbb{R}$ and consider an assignment of a geodesic segment on each of these geodesics of length 
 $\ell (x,y)$ for $x \in A$ and $y \in B$.
 In these coordinates, the Liouville measure of the vectors tangent to these geodesic segments is
\begin{equation}
\int_{A}\int_{B} \frac{\ell (x,y) dy dx}{(x-y)^2}.
\end{equation}

Since a boundary component of the concave core with grade $k$ is equidistant from the corresponding boundary component of grade $k^{\prime}$, and since orthorays emanating from these boundary components have a natural one to one correspondence, it is enough to prove the theorem for the boundary component being either a horocycle, geodesic, or the boundary of a disc centered at a cone point. 

We first consider the case that $\alpha$ is a horocycle of length one. 
We choose a normalized lift $\tilde{\alpha}$ to the upper half-plane so that $\tilde{\alpha}$ is the horocycle segment of height 1 between the points $i$ and $i+1$. All of the orthorays emanating from 
 $\tilde{\alpha}$ are vertical geodesics. Let $\rho$ be an orthoray in 
 $\Lambda$ with endpoint $x$. Then there exist positive constants $a_x, b_x$ so that the set of geodesic rays asymptotic to $\rho$ in $\Lambda_{\rho}$ have coordinates, 
 $\{(x, y): y \geq a_x\} \bigcup \{(x, y): y \leq -b_x\}$ (see Figure \ref{fig:measure2}). Using these coordinates the set of geodesics in $\Lambda_{\epsilon}$ asymptotic to $\rho$ is, $\{(x,y): y \geq a_x \ \text{or}\ y\leq b_x\}$. In fact, a straightforward calculation shows that 
 $a_{x}=x+(\frac{1+\epsilon^2}{\epsilon})$ and $b_{x}=x-(\frac{1+\epsilon^2}{\epsilon})$. Let $A$ be the set of endpoints of the orthorays in $\Lambda_{\epsilon}$. Since $0 \leq x \leq 1$, it follows that $\{(x, y):x \in A, y \geq a \} \subset \Lambda_{\epsilon}$ where $a=1+(\frac{1+\epsilon^2}{\epsilon})$. 
Noting that by construction $\ell (x,y) = \delta$ and that the unit tangent vectors of the geodesic segments are distinct (Lemma \ref{lem: distinct orthorays}), we have
\begin{equation}
\label{ }
 \text{Vol} (\Gamma_{\delta}) \geq 
\int_{A}\int_{a}^{\infty} \frac{\ell (x,y) dy dx}{(x-y)^2} =
\delta \int_{A}\int_{a}^{\infty} \frac{ dy dx}{(x-y)^2}
\end{equation}
\begin{equation}
=\delta \int_{A} \frac{dx}{a-x}
=\delta \int_{\alpha} \frac{\chi_A}{a-x} dx
\geq \delta \int_{\alpha} \frac{\chi_A}{a} dx
=\frac{\delta}{a} \lambda (A)=\Big( \frac{\delta \epsilon}{1+\epsilon +\epsilon^2} \Big) \lambda (A).
\end{equation}
where $A$ is the set of all endpoints of orthorays in the interval $[0,1]$, $\lambda (A)$ is the one dimensional measure on $A$, and $\chi_A$ is the characteristic function of $A$. Finally noting that $\lambda (A)=\lambda (\Lambda)$ and setting $c= \frac{ \epsilon}{1+\epsilon +\epsilon^2}$ finishes the horocycle boundary case. 

The cone point and boundary geodesic cases follow in much the same way as the horocyclic boundary case. Namely, for the case that $\alpha$ is the boundary of a disc centered at a cone point of angle $\theta$, we normalize a lift $\tilde{\alpha}$ so that it passes through $i$ and is symmetric about $i$. In the case of the boundary geodesic we normalize a lift so that it has endpoints $-1$ and $1$, and is symmetric about $i$ (see figure 3). In either case with these normalizations the geodesic projection from
$\tilde{\alpha}$ to $\mathbb{R}$ has bounded distortion and hence sets of measure zero correspond to sets of measure zero. Furthermore, the orthorays emanating from 
$\tilde{\alpha}$ are almost vertical. The rest of the argument is similar to the horocyclic case. We leave the details to the reader.\end{proof}

\begin{theorem} \label{thm: measure zero}
The measure of the set of orthorays that stay in the $\vec{k}$-concave core is zero.
\end{theorem}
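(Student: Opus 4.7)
The plan is to contrast the linear-in-$\delta$ lower bound $\mathrm{Vol}(\Gamma_\delta) \geq \delta c \lambda_\alpha(\Lambda)$ from Lemma \ref{lem: measure inequality} with a uniform-in-$\delta$ upper bound on $\mathrm{Vol}(\Gamma_\delta)$, and then let $\delta \to \infty$.

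First I would reduce to a single boundary component of positive length. The set of orthorays from $\partial V$ that stay in $V$ decomposes as a disjoint union indexed by the boundary components of $V$; the pieces coming from a component $\alpha$ with $\lambda_\alpha = 0$ (cusps or maximal-grade cone-point boundaries) automatically have measure zero, so it suffices to show $\lambda_\alpha(\Lambda) = 0$ for each boundary component $\alpha$ of $V$ with $\lambda_\alpha > 0$, where $\Lambda \subset \alpha$ is the set of initial points of orthorays staying in $V$ for all positive time.

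Next I would establish the uniform upper bound. Every geodesic ray in $\Lambda_\epsilon$ fellow-travels an orthoray of $\Lambda$ and hence remains within distance $\epsilon$ of $V$, so $\Gamma_\delta \subset T^1 V_\epsilon$ where $V_\epsilon$ is the $\epsilon$-neighborhood of $V$ in $X$. Since $X$ has finite area, so does $V_\epsilon$, and the Liouville volume of $T^1 V_\epsilon$ is $2\pi \cdot \area(V_\epsilon) < \infty$. Lemma \ref{lem: distinct orthorays} guarantees the distinctness of the initial tangent vectors to rays in $\Lambda_\epsilon$; combined with the fact that two distinct geodesics on a hyperbolic surface never share a unit tangent vector (else their geodesic orbits coincide globally), this shows that $\Gamma_\delta$ embeds into $T^1 V_\epsilon$ as a genuine subset. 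Hence
$$
\mathrm{Vol}(\Gamma_\delta) \leq 2\pi \cdot \area(V_\epsilon)
$$
uniformly in $\delta$.

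Combining the two bounds yields $\delta c \lambda_\alpha(\Lambda) \leq 2\pi \cdot \area(V_\epsilon)$ for every $\delta > 0$, and letting $\delta \to \infty$ forces $\lambda_\alpha(\Lambda) = 0$. The main point requiring care is the uniform embeddedness of $\Gamma_\delta$ in $T^1 V_\epsilon$ as $\delta$ grows: distinctness of initial tangent vectors from Lemma \ref{lem: distinct orthorays} is not a priori enough, since orbits could in principle re-merge. The backward-extension argument already used in the paper, which pushes each ray into the $\epsilon$-collar of $\alpha$ before it reaches $\alpha$ itself, prevents one forward orbit from joining another at any positive time and is the essential input in promoting initial-vector distinctness to global distinctness of orbits.
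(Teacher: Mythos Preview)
Your proposal is correct and follows essentially the same route as the paper: bound $\mathrm{Vol}(\Gamma_\delta)$ above by the (finite) Liouville volume of the unit tangent bundle of a compact region, invoke Lemma \ref{lem: measure inequality} for the linear lower bound, and let $\delta\to\infty$. Your use of $T^1 V_\epsilon$ rather than $T^1 V$ is in fact slightly cleaner, since the paper itself notes that rays in $\Lambda_\epsilon$ need only stay within $\epsilon$ of $V$. One small comment: Lemma \ref{lem: distinct orthorays}, together with the backward-extension argument preceding it, already asserts that \emph{all} unit tangent vectors along the rays in $\Lambda_\epsilon$ are distinct (not merely the initial ones), so your later worry about orbits ``re-merging'' is already handled there and need not be raised separately.
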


\begin{proof}
Now let $\alpha$ be a boundary component of the $\vec{k}$-concave core $V$, and let $S(V)$ be the unit tangent bundle of $V$. Since $V$ is compact,
$\text{Vol}(S(V)) < \infty$. On the other hand, Lemma (\ref{lem: measure inequality}) tells us that, 
$
\infty > \text{Vol}(S(V))\geq \text{Vol} (\Gamma_{\delta}) 
\geq \delta c \lambda_{\alpha} (\Lambda),
$ for all $\delta$. Thus it must be that 
$\lambda_{\alpha} (\Lambda)=0.$ 
Finally, since the measure is zero for each boundary component it is zero for all orthorays emanating from the $\partial{V}$ that stay in $V$.
\end{proof}

\begin{remark}
	
\begin{enumerate}
	\item [(i)] If $\alpha$ is actually a cone point of angle $\pi/j$ or cusp (so the grade at $\alpha$ is $j$ or $\infty$), $\alpha$ has measure zero, but the set of directions from $\alpha$ has a natural positive measure (normalized to $1$ in the cusp case) and we can still identify this set of directions with the set of orthrays from $\alpha$. The above proof can be modified (Lemma 5.1 needs to be adjusted and replaced with some ergodicity result for the geodesic flow) to show that the subset $\Lambda$ of orthorays that stay in $V$ for infinite time has measure zero, provided that $V$ has at least one boundary component with positive length. On the other hand, if $X$ is in fact a model surface for the grading $\vec{k}$, then $V=X$ and in this case the set of orthorays which stay in $V$ for infinite time has full measure, as the number of orthorays with finite length is countable.
	\item [(ii)] Using the fact that a finitely generated Fuchsian group of the second kind must have a geodesic boundary component, say $\alpha$, and that the limit points correspond to either fixed points of parabolic elements or orthorays emanating from 
$\alpha$ which stay in the concave core of the quotient, Theorem \ref{thm: measure zero} yields the well-known fact that the measure of the limit set of a finitely generated Fuchsian group is either 0 or 1. 

\end{enumerate}
\end{remark}

We can now pass to the proof of the identity.

\subsection{Proof of the abstract identity}

The results of the previous section tell us that the complementary region to the gaps has measure $0$. To complete the proof of the theorem, we must show that the gaps are disjoint. 

Consider $\mu$, a $\vec{k}$-prime orthogeodesic between $\alpha$ and $\beta$, boundary elements of $V_{\vec{k}}(X)$. Let $Q_\mu \in \PP_{\vec{k}}(X)$ be the immersed pair of pants associated to $\mu$. 

To each $z\in \alpha$, we associate an orthoray $\rho_z$. The boundary $\alpha$ is divided in two parts: the gap $[x_\mu,y_\mu]$ associated to $\mu$, and the complementary region. 

{\it Claim 1:} An orthoray $\rho_z$ for $z\in [x_\mu,y_\mu]$ either hits $\alpha$ or $\beta$ in finite time and lies entirely inside $Q_\mu$, or is part of the measure $0$ set of infinite orthorays which stay in $V$ that remains inside $Q_\mu$. 

Note that this is true for any orthogeodesic, not just the $\vec{k}$-prime ones, but we're only interested in the gaps associated to $\vec{k}$-prime orthogeodesics. 

\begin{proof}[Proof of Claim 1.]
We argue in the pre-immersed pair of pants $\tilde{\tilde{P}}_\mu$. The boundary elements $\tilde{\tilde{\alpha}}$ and $\tilde{\tilde{\beta}}$ are the boundary of the $\vec{1}$-concave core of the underlying boundary elements. 

The gaps are the complementary region of the orthorays that begin by hitting $\tilde{\tilde{\gamma_\mu}}$ following a simple geodesic. Hence, if an orthoray emanating from the gap was to leave the pair of pants, it would have to first wrap around $\tilde{\tilde{\alpha}}$ or $\tilde{\tilde{\beta}}$ before leaving. But in that case they must intersect either $\tilde{\tilde{\alpha}}$ or $\tilde{\tilde{\beta}}$, as boundaries of the $\vec{1}$-natural collars. Otherwise the orthoray stays entirely inside the pair of pants and in particular belongs to the measure $0$ set. 
\end{proof}

Suppose the gaps for two $\vec{k}$-prime orthogeodesics, $\mu$ and $\eta$, intersect. 

{\it Claim 2:} The gaps for two $\vec{k}$-prime orthogeodesics, $\mu$ and $\eta$, are disjoint. 

\begin{proof}[Proof of Claim 2.]
If one of the gaps is strictly contained inside the other, then by Claim 1 above, the associated pairs of pants are strictly contained inside each other, and one of them is not $\vec{k}$-prime. 

Now suppose the gaps overlap. As the gaps are continuous intervals, this means that the endpoint of one gap, say that of $\mu$, is contained inside the gap of the other, so $\eta$. The endpoint of the gap of $\mu$, say $x_\mu$, corresponds to a ray that fellow travels $\gamma_\mu$ (in particular it wraps around it infinitely many times, see Figure \ref{fig:measure} for an illustration in the pre-immersed pair of pants). 

Now take any point $x$ in the gap of $\eta$ that does not belong to the gap of $\mu$. It hits $\gamma_\mu$ in finite time. Denote by $[x,x_\mu]$ the subset of $\alpha$ lying in the gap of $\eta$ between $x$ and $x_\mu$. The main observation is that the orthorays emanating from the points in this interval cover all points of $\gamma_\mu$. This follows from the continuity of the behavior of geodesics. The ray $\rho_x$ hits $\gamma_\mu$ in finite time, whereas $\rho_{x_\mu}$ wraps around $\gamma_\mu$ infinitely many times. As such, any point on $\gamma_\mu$ is hit by infinitely many rays emanating from the interval $[x,x_\mu]$. 

In particular, this implies that $\gamma_\mu$ is contained inside $Q_\eta$. From this we deduce that $Q_\mu$ is also contained inside $Q_\eta$. To see this, recall that $\gamma_\mu$ is the concatenation of $\alpha^{k_\alpha}, \mu, \beta^{k_\beta}$ and $\mu^{-1}$ and so $\gamma_\mu$ is contained if and only if $\alpha^{k_\alpha}, \mu, \beta^{k_\beta}$ are also contained. Now by construction, $Q_\mu$ is also contained inside $Q_\eta$, but this violates the $\vec{k}$-primality of $\mu$. 
\end{proof}

\section{Computations of the measures}\label{sec:measures}

In this section we compute the measures and express the identities in terms of both traces and ortholengths.

\subsection{Computations}\label{subsec:Computations}

The immersed pair of pants point of view is essential in our computations. We compute the measure in the immersed pair of pants which has as boundary either cone points or boundary geodesics. We deduce the cusp case by a limit argument (but it can also be computed directly).

{\it Orthogeodesics between geodesic boundaries}

We begin with the case where $\mu$ is an oriented orthogeodesic between two boundary geodesics $\alpha$ and $\beta$ of indices $k_\alpha$ and $k_\beta$. Computations are done in the pre-immersed pair of pants of boundary curves $\tilde{\tilde{\alpha}}$, $\tilde{\tilde{\beta}}$ and $\tilde{\tilde{\gamma}}_\mu$. We denote by $\measure$ the measure (of the length) on the boundary geodesic and by $\bigmeasure$ the measure on the boundary of the concave core.

\begin{figure}[H]
\leavevmode \SetLabels
\L(.29*0.58) $\frac{\tilde{\tilde{\alpha}}}{2}$\\
\L(.69*0.58) $\frac{\tilde{\tilde{\beta}}}{2}$\\
\L(.489*0.15) $\frac{\tilde{\tilde{\gamma}}}{2}$\\%
\L(.25*0.3) $h$\\%
\endSetLabels
\begin{center}
\AffixLabels{\centerline{\includegraphics[width=8cm]{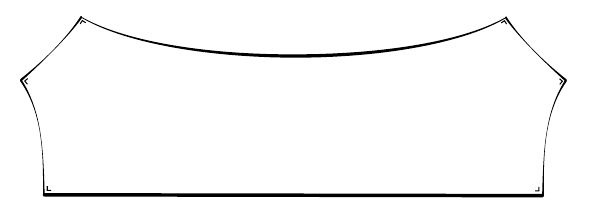}}}
\vspace{-30pt}
\end{center}
\caption{The hexagon in the pre-immersed pair of pants.}
\label{fig:hexagon}
\end{figure}
The key of the computation is to see that $\tilde{\tilde{\alpha}}$, $\tilde{\tilde{\beta}}$ and $\tilde{\tilde{\gamma}}_\mu$ form a pair of pants, hence their half-lengths satisfy the length relations of a right-angled hexagon, see Figure \ref{fig:hexagon}.

\begin{figure}[H]
\leavevmode \SetLabels
\L(.35*0.405) $\frac{\measure(\mu)}{2}$\\
\endSetLabels
\begin{center}
\AffixLabels{\centerline{\includegraphics[width=10cm]{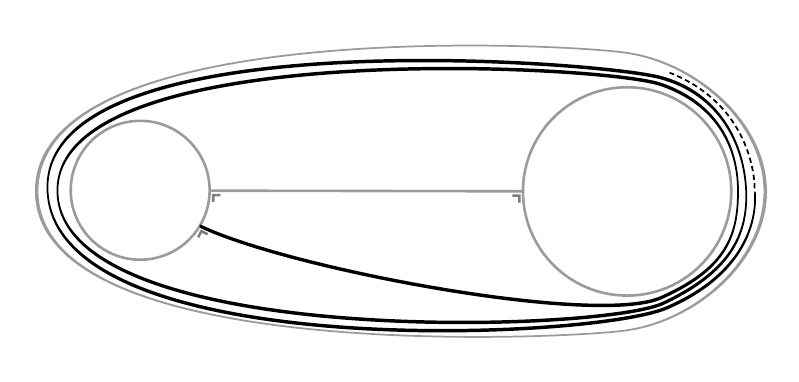}}}
\end{center}
\caption{The pre-immersed pair of pants where $\measure(\mu)/2$ appears. The full measure is given by the symmetric double.}
\label{fig:measure}
\end{figure}

Furthermore, $\measure(\mu)$ can be computed via an ideal right-angled quadrilateral immersed in the pair of pants, see Figures \ref{fig:measure} and \ref{fig:quad}. The sides labelled $h$ are useful to relate the different quantities.

\begin{figure}[H]
\leavevmode \SetLabels
\L(.209*0.62) $\frac{\tilde{\tilde{\alpha}}-\measure(\mu)}{2}$\\
\L(.195*0.31) $h$\\%
\endSetLabels
\begin{center}
\AffixLabels{\centerline{\includegraphics[width=10cm]{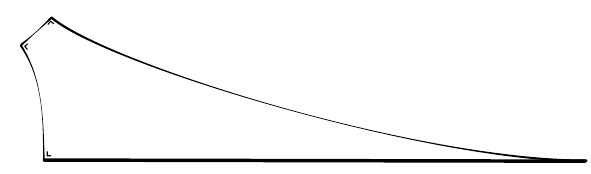}}}
\vspace{-30pt}
\end{center}
\caption{The immersed quadrilateral where $\measure(\mu)$ appears. }
\label{fig:quad}
\end{figure}

For the computation, we use the following half-trace notations:
$$
A:= \cosh\frac{\tilde{\tilde{\alpha}}}{2},\,B:= \cosh\frac{\tilde{\tilde{\beta}}}{2}, \,C:= \cosh\frac{\tilde{\tilde{\gamma_\mu}}}{2}
$$
and
$$
M:=
\cosh\frac{\measure(\mu)}{2}.
$$
Using standard trigonometry identities, we have
\begin{equation}\label{eq:start}
\sinh\left({\tilde{\tilde{\alpha}}}/2-\measure(\mu)/2\right) = \frac{\sqrt{A^2-1}\sqrt{C^2-1}}{\sqrt{A^2+B^2+C^2 + 2 ABC -1}}
\end{equation}
We set $p$ to be the polynomial
$$
p(x,y,z):=x^2+y^2+z^2+2xyz-1.
$$
Following further simplifications and manipulations we arrive at 
$$
M= \frac{A^2(C-\sqrt{C^2-1})+AB + \sqrt{C^2-1}}{\sqrt{p(A,B,C)}}.
$$
We can now express $M$ in terms of the lengths $\alpha$, 
$\beta$ and $\gamma_\mu$. Recall
$$
\tilde{\tilde{\alpha}}= k_\alpha \alpha, \tilde{\tilde{\beta}}= k_\alpha \beta, \mbox{ and } \tilde{\tilde{\gamma}}_\mu = \gamma_\mu,
$$
and we set $a,b,c$ to be 
$$
a:=\cosh(\alpha/2), b:= \cosh(\beta/2) \mbox{ and } c:=\cosh(\gamma_\mu/2).$$
Hence
$$
A=T_{k_\alpha}(a), B=T_{k_\beta}(b) \mbox{ and } C=c
$$
where $T_k(x)$ is the $k$-th Chebyshev polynomial of the first kind. We thus get the following expression for $M$:

$$
M= \frac{T_{k_\alpha}(a)^2(c-\sqrt{c^2-1})+T_{k_\alpha}(a)T_{k_\beta}(b) + \sqrt{c^2-1}}{\sqrt{p(T_{k_\alpha}(a),T_{k_\beta}(b),c)}}.
$$

Hence
\begin{equation}
\label{ }
\measure(\mu)= 2\arccosh\left(\frac{T_{k_\alpha}(a)^2(c-\sqrt{c^2-1})+T_{k_\alpha}(a)T_{k_\beta}(b) + \sqrt{c^2-1}}{\sqrt{p(T_{k_\alpha}(a),T_{k_\beta}(b),c)}}\right)
\end{equation}
and the measure on the boundary of the $\vec{k}$-concave core is
\begin{equation}
\label{ }
\bigmeasure(\mu)= \coth(k_\alpha \alpha/2)\measure(\mu).
\end{equation}
Expressed in terms of half-traces this becomes
\begin{eqnarray}
\label{ }
\bigmeasure(\mu) &=&
\frac{T_{k_\alpha}(a)}{\sqrt{T_{k_\alpha}(a)^2-1}}\measure(\mu)\\
&=& \frac{2 T_{k_\alpha}(a)}{\sqrt{T_{k_\alpha}(a)^2-1}}
\arccosh\left(\frac{T_{k_\alpha}(a)^2(c-\sqrt{c^2-1})+T_{k_\alpha}(a)T_{k_\beta}(b) + \sqrt{c^2-1}}{\sqrt{p(T_{k_\alpha}(a),T_{k_\beta}(b),c)}}\right).
\end{eqnarray}

We can also compute the above quantities using the orthogeodesic length on X. In order to do this we use the following hexagon formula that relates the sides,
$h, \frac{k_{\alpha} \alpha}{2}, \mu,$ and $\frac{k_{\beta} \beta}{2}$.

\begin{equation}\label{eq: }
\coth h \sinh \mu= A \cosh \mu - 
\frac{B} {\sqrt{B^2-1}} \sqrt{A^2-1}
\end{equation}

Using the quadrilateral formula along with the summation formula for $\cosh$ after some simplification we obtain
\begin{equation}\label{eq}
\coth h= A M - \sqrt{A^2-1} \sqrt{M^2-1}
\end{equation}

Eliminating $h$ in these two equations and rearranging
\begin{equation}\label{eq: gap measure}
\frac{A}{\sqrt{A^2-1}} M-\sqrt{M^2-1}=
\frac{1}{\sinh \mu} \left[\cosh \mu \frac{A}{\sqrt{A^2-1}}-
\frac{B}{\sqrt{B^2-1}} \right]
\end{equation}

For $k_\alpha <\infty$ (that is, $A <\infty$) we solve for $M$ in equation (\ref{eq: gap measure}) to get the two solutions

\begin{multline}\label{eq:gap measure for geodesic}
\frac{(A^2-1)}{\sinh \mu} \left[\frac{A^2 \cosh \mu}{A^2-1} -
\frac{AB}{\sqrt{A^2-1} \sqrt{B^2-1}}\right.\\ 
\left. \pm \sqrt{\left(\cosh \mu -\frac{AB}{\sqrt{A^2-1} \sqrt{B^2-1}}\right)^2-\frac{1}{(A^2-1)(B^2-1)}} \right]
\end{multline}

The geometrically relevant solution (henceforth denoted $M$) is the minus root.

\begin{eqnarray*}
M & = \cosh\left(\frac{\lambda(\mu)}{2}\right)\\
& = \frac{(A^2-1)}{\sinh \mu} & \left[\frac{A^2 \cosh \mu}{A^2-1} -
\frac{AB}{\sqrt{A^2-1} \sqrt{B^2-1}} \right. \\
& &- 
\left. \sqrt{\left(\cosh \mu -\frac{AB}{\sqrt{A^2-1} \sqrt{B^2-1}}\right)^2 
-\frac{1}{(A^2-1)(B^2-1)}} \right]
\end{eqnarray*}

{\it Letting $k_\alpha$ and $k_\beta$ go towards infinity}

Letting $k_{\beta}$ go to infinity corresponds to $B \rightarrow \infty$ in the expression above (Equation (\ref{eq:gap measure for geodesic}). We continue to denote the measure as $\lambda$, and as before $M= \cosh \frac{\lambda}{2}$. We see that 
$M=\coth \mu$ is the solution when $k_{\beta} \rightarrow \infty$. Put another way, 
$M=\coth \mu$ is a solution of the equation, 
\begin{equation}
\label{ }
\frac{A}{\sqrt{A^2-1}} M-\sqrt{M^2-1}=
\frac{1}{\sinh \mu} \left[\cosh \mu \frac{A}{\sqrt{A^2-1}}-
1 \right].
\end{equation} 
Note that in the case $\lambda(\mu)$ is the same measure as in the orthogeodesic identity of \cite{Basmajian}, namely
\begin{equation}
\lambda(\mu) = 2\log\left(\coth\frac{\mu}{4}\right).
\end{equation}
We next let $k_{\alpha} \rightarrow \infty$ (that is, $A \rightarrow \infty$) in Equation (\ref{eq: gap measure}) which reduces to a linear equation. Solving this equation we obtain
\begin{equation}
\label{eq:gap measure for k_alpha goes to infinity}
M=\cosh\frac{\lambda}{2}=
\frac{1}{2}\left[\frac{2\cosh^2 \mu -2\frac{B}{\sqrt{B^2-1}}
\cosh \mu +\frac{1}{B^2-1}}{\sinh \mu (\cosh \mu -1)}\right]
\end{equation}

We summarize these results in terms of the grades on $\alpha$ and $\beta$.
 
 \begin{proposition}
 Let $\mu$ be an orthogeodesic from a boundary geodesic 
 $\alpha$ with grade $k_{\alpha}$ to a boundary geodesic 
 $\beta$ with grade $k_\beta$.
 \begin{enumerate}
 \item For finite $k_\alpha$ and $k_\beta$, the gap measure is
\begin{multline} 
\cosh\left(\frac{\lambda(\mu)}{2}\right)=\\
\frac{\left(\sinh\frac{k_{\alpha}\alpha}{2}\right)^2}{\sinh \mu}
 \left[\coth^2 \left(\frac{k_{\alpha}\alpha}{2} \right)\cosh \mu 
- \coth \left( \frac{k_{\alpha}\alpha}{2}\right) \coth \left(\frac{k_{\beta}\beta}{2} \right) \right.\\
\left. -\sqrt{\left(\cosh \mu - \coth \left(\frac{k_{\alpha}\alpha}{2}\right) \coth\left(\frac{k_{\beta}\beta}{2}\right)\right)^2 -\frac{1}{\sinh^2\left( \frac{k_{\alpha}\alpha}{2} \right)
 \sinh^2 \left(\frac{k_{\beta}\beta}{2} \right)}}\right]
 \end{multline}
 
\item If $k_{\beta} \rightarrow \infty$ then 
$\cosh \left(\frac{\lambda(\mu)}{2}\right)=\coth \mu. $

\item If $k_{\alpha} \rightarrow \infty$ then 
\begin{equation}
\cosh\left(\frac{\lambda(\mu)}{2}\right)=
\frac{1}{2}\left[\frac{2 \cosh^2 \mu -2 \coth \frac{k_{\beta}\beta}{2} \cosh \mu + \frac{1}{\sinh^2 \frac{k_{\beta}\beta}{2} }}{\sinh \mu (\cosh \mu -1)}\right]
\end{equation}
\end{enumerate}
\end{proposition}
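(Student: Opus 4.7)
The approach is to assemble the three formulas from the hyperbolic trigonometry of the pre-immersed pair of pants $\tilde{\tilde P}_\mu$, using the two relations already derived in the preceding subsection. Write $A := \cosh(k_\alpha \alpha/2)$, $B := \cosh(k_\beta \beta/2)$, $M := \cosh(\lambda(\mu)/2)$, and let $h$ be the common perpendicular distance appearing in both the right-angled hexagon of Figure \ref{fig:hexagon} (cuffs $\tilde{\tilde\alpha}$, $\tilde{\tilde\beta}$, $\tilde{\tilde\gamma}_\mu$ and seam $\mu$) and in the ideal right-angled quadrilateral of Figure \ref{fig:quad} (where the gap $\lambda(\mu)$ is exposed along $\tilde{\tilde\alpha}$). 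The hexagon supplies a relation between $\coth h$, $\sinh \mu$, $\cosh \mu$, $A$ and $B$, and the quadrilateral supplies a relation between $\coth h$, $A$ and $M$.

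For part (1), I would eliminate $h$ between these two relations to obtain equation (\ref{eq: gap measure}), which is linear in $M$ and $\sqrt{M^2-1}$. Isolating the square root and squaring yields a quadratic in $M$ whose two roots are written out in (\ref{eq:gap measure for geodesic}); the minus root is the geometrically relevant one, which can be verified by checking that it agrees with identity (\ref{eq:start}) for $\sinh(\tilde{\tilde\alpha}/2 - \lambda(\mu)/2)$ in any non-degenerate case. Substituting $\sqrt{A^2-1} = \sinh(k_\alpha \alpha/2)$ and $\sqrt{B^2-1} = \sinh(k_\beta \beta/2)$ converts the quotients $A/\sqrt{A^2-1}$ and $B/\sqrt{B^2-1}$ into $\coth(k_\alpha \alpha/2)$ and $\coth(k_\beta \beta/2)$, producing the formula displayed in the proposition.

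For parts (2) and (3), I would take the limits directly in equation (\ref{eq: gap measure}) rather than in the explicit root formula, since that way I avoid indeterminate forms. When $k_\beta \to \infty$, one has $B \to \infty$ and $B/\sqrt{B^2-1} \to 1$; the resulting relation is satisfied by $M = \coth \mu$ by direct substitution, which gives (2). When $k_\alpha \to \infty$, the factor $A/\sqrt{A^2-1} \to 1$ makes the $\sqrt{M^2-1}$ term in (\ref{eq: gap measure}) collapse into the remaining terms, leaving a linear equation in $M$ alone that can be solved to give the formula in (3).

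The one step that requires real care is the root selection in part (1); the two candidates differ by a sign in front of the square root, and choosing the wrong one would not even be positive for cusped limits. Everything else is a bookkeeping exercise in half-trace substitutions and elementary hyperbolic trigonometry, so the main obstacle is the root-selection argument, which I would pin down by specializing to a one-parameter family that reduces to a known case (for instance, letting $\mu \to \infty$, where $M \to 1$ should hold) and confirming that only the minus root exhibits the correct asymptotics.
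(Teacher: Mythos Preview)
Your proposal is correct and follows essentially the same approach as the paper: set up the hexagon and quadrilateral relations in the pre-immersed pants, eliminate $h$ to obtain equation (\ref{eq: gap measure}), solve the resulting quadratic for part (1) with the minus root, and pass to the limits $B\to\infty$ and $A\to\infty$ directly in (\ref{eq: gap measure}) for parts (2) and (3). The paper simply asserts that the minus root is ``the geometrically relevant solution'' without further comment, so your proposed asymptotic check as $\mu\to\infty$ is in fact more careful than what the paper provides.
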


Since the prime orthogeodesics $\OO^{\prime} _{\vec{k}}(X),
\OO^{\prime} _{\vec{k}+1}(X),...$ form an exhaustion of 
$\OO(X)$, and since the respective gap measures converge to the gap measure for the orthospectrum identity:

\begin{theorem}
Let $X$ be a hyperbolic surface with geodesic boundary. 
The identity associated to the grade $\vec{k}$ converges to the orthospectrum identity, as 
$\vec{k} \rightarrow (\infty,...,\infty)$. 

\end{theorem}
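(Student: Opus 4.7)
The plan is to pass to the limit $\vec{k} \to \vec{\infty}$ in the abstract identity of Theorem 1.1,
\[
\sum_{\mu \in \OO'_{\vec{k}}(X)} \bigmeasure(\mu) = \ell\bigl(\partial V_{\vec{k}}(X)\bigr),
\]
and to match both sides with their Basmajian counterparts.

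For the right-hand side, each component of $\partial V_{\vec{k}}(X)$ is the equidistant curve of a boundary geodesic $\delta$ of length $\ell(\delta)$ at distance $\arcsinh\bigl(1/\sinh(k_\delta \ell(\delta)/2)\bigr)$, and therefore has length $\ell(\delta)\coth(k_\delta \ell(\delta)/2)$, which decreases monotonically to $\ell(\delta)$ as $k_\delta \to \infty$. Summing over all boundary components yields $\ell(\partial V_{\vec{k}}(X)) \to \ell(\partial X)$.

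For the individual summands, Corollary 4.10 guarantees that every $\mu \in \OO(X)$ belongs to $\OO'_{\vec{k}}(X)$ for $\vec{k}$ large enough; hence the index sets exhaust $\OO(X)$. For such a fixed $\mu$, the explicit formula derived in Section 6.1, in the joint limit $k_\alpha, k_\beta \to \infty$, specializes to $\cosh(\bigmeasure(\mu)/2) \to \coth(\ell(\mu))$. A short hyperbolic trigonometric manipulation (using $1+\cosh x = 2\cosh^2(x/2)$ and $\sinh x = 2\sinh(x/2)\cosh(x/2)$) rewrites this as $\bigmeasure(\mu) \to 2\log\coth(\ell(\mu)/2)$, which is exactly the Basmajian summand.

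The main obstacle is justifying the interchange of the limit with the infinite sum. Since all summands are non-negative and converge pointwise on an eventually-stabilizing index set, a Fatou argument gives one direction immediately:
\[
\sum_{\mu \in \OO(X)} 2\log\coth(\ell(\mu)/2) \;\leq\; \liminf_{\vec{k}\to\vec{\infty}} \sum_{\mu \in \OO'_{\vec{k}}(X)} \bigmeasure(\mu) \;=\; \ell(\partial X).
\]
For the reverse inequality I would compare gap decompositions directly: the $\vec{k}$-gap on $\partial V_{\vec{k}}(X)$ associated to a $\vec{k}$-prime $\mu$, transported to $\partial X$ via the nearest-point projection from the equidistant curve to the boundary geodesic (a bilipschitz map with distortion $\coth(k_\alpha \ell(\alpha)/2) \to 1$), is contained in the Basmajian gap of $\mu$. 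Since by Theorem 5.3 the $\vec{k}$-gaps cover $\partial V_{\vec{k}}(X)$ up to measure zero, this yields $\ell(\partial V_{\vec{k}}(X)) \leq (1+o(1)) \sum_{\mu \in \OO(X)} 2\log\coth(\ell(\mu)/2)$, closing the inequality in the limit and establishing convergence of the family of identities to Basmajian's orthospectrum identity.
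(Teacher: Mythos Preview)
Your overall strategy matches the paper's, which is extremely brief: it simply records that the sets $\OO'_{\vec{k}}(X)$ exhaust $\OO(X)$ and that each individual gap measure converges to the Basmajian gap measure $2\log\coth(\ell(\mu)/2)$, and declares the theorem. The paper does not attempt any interchange-of-limits argument; the statement ``converges to the orthospectrum identity'' is understood informally as convergence of the index set and of the summands (both sides being identities whose right-hand sides visibly converge).

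Your added rigor via Fatou is fine, but the geometric step you propose for the reverse inequality is incorrect. The claim that the projected $\vec{k}$-gap of a $\vec{k}$-prime $\mu$ is contained in the Basmajian gap of $\mu$ is false. By Claim~1 of Section~5.2, an orthoray starting in the $\vec{k}$-gap of $\mu$ may hit $\alpha$ or $\beta$ along \emph{any} orthogeodesic $\eta$ with $\eta <_{\vec{k}} \mu$; extended back to $\partial X$, such a ray lies in the Basmajian gap of $\eta$, not of $\mu$. Concretely, take $\mu$ simple and $\vec{k}=\vec{1}$: the $\vec{1}$-gap of $\mu$ contains initial points of orthorays that wrap around $\alpha$ before exiting, and those points lie in Basmajian gaps of orthogeodesics other than $\mu$. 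So your containment inequality does not hold term by term.

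The repair is immediate and makes the whole interchange discussion unnecessary: since Basmajian's identity is already known for $X$, one has $\sum_{\mu\in\OO(X)} 2\log\coth(\ell(\mu)/2)=\ell(\partial X)=\lim_{\vec{k}}\ell(\partial V_{\vec{k}}(X))$, which together with your Fatou inequality forces equality and hence termwise convergence of the sums. This is effectively what the paper is relying on implicitly.
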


We proceed in a similar manner in the other cases. We illustrate the appropriate geometric figures in each case, so that the detailed computations can be recuperated via standard trigonometry arguments. 

{\it Orthogeodesics between cone points}

Let $\mu$ be a $\vec{k}$-prime orthogeodesic between cone points $\alpha$ and $\beta$. Here we use the same notation as above, with the exception of 
$$
a:= \cos(\alpha/2) \mbox{ and } b:= \cos(\beta/2)
$$
where in order to not add more notation, we also use $\alpha$ and $\beta$ for the geometric measures of the angles. Here the mesure $\measure(\mu)$ is an angle and $\bigmeasure(\mu)$ is the length measure on the boundary of the $\vec{k}$-concave core. 

As above, we obtain:
\begin{equation}
\label{ }
\measure(\mu):= 2\arccos\left(\frac{T_{k_\alpha}(a)^2(c-\sqrt{c^2-1})+T_{k_\alpha}(a)T_{k_\beta}(b) + \sqrt{c^2-1}}{\sqrt{p(T_{k_\alpha}(a),T_{k_\beta}(b),c)}}\right)
\end{equation}
and
$$
\bigmeasure(\mu)= \cot (\alpha/2)\measure(\mu).
$$
hence
\begin{eqnarray}
\bigmeasure(\mu) & =&\frac{a}{\sqrt{1-a^2}}\measure(\mu)\\
& =& \frac{2a}{\sqrt{1- a^2}}\arccos\left(\frac{T_{k_\alpha}(a)^2(c-\sqrt{c^2-1})+T_{k_\alpha}(a)T_{k_\beta}(b) + \sqrt{c^2-1}}{\sqrt{p(T_{k_\alpha}(a),T_{k_\beta}(b),c)}}\right).\label{ }
\end{eqnarray}

{\it Orthogeodesics from a cone point to a geodesic}

Let $\mu$ be a $\vec{k}$-prime orthogeodesic between a cone point $\alpha$ and a closed geodesic $\beta$. Here we use the same notation as above, with the exception of 
$$
a:= \cos(\alpha/2) \mbox{ and } b:= \cosh(\beta/2).
$$
The quantity $\alpha$ is an angle and $\beta$ a length. Here the measure $\measure(\mu)$ is an angle and $\bigmeasure(\mu)$ is the length measure on the boundary of the $\vec{k}$-concave core.

As above, we obtain:
\begin{equation}
\label{ }
\measure(\mu)= 2\arccos\left(\frac{T_{k_\alpha}(a)^2(c-\sqrt{c^2-1})+T_{k_\alpha}(a)T_{k_\beta}(b) + \sqrt{c^2-1}}{\sqrt{p(T_{k_\alpha}(a),T_{k_\beta}(b),c)}}\right)
\end{equation}
and
$$
\bigmeasure(\mu)= \cot(\alpha/2)\measure(\mu).
$$
hence
\begin{eqnarray}
\bigmeasure(\mu) &= & \frac{a}{\sqrt{1-a^2}}\measure(\mu)\\
&= & \frac{2a}{\sqrt{1- a^2}}\arccos\left(\frac{T_{k_\alpha}(a)^2(c-\sqrt{c^2-1})+T_{k_\alpha}(a)T_{k_\beta}(b) + \sqrt{c^2-1}}{\sqrt{p(T_{k_\alpha}(a),T_{k_\beta}(b),c)}}\right).
\end{eqnarray}

{\it Orthogeodesics from a geodesic to a cone point}

Let $\mu$ be a $\vec{k}$-prime orthogeodesic between a geodesic $\alpha$ and a cone point $\beta$. Here we use the same notation as above, with the exception of 
$$
a:= \cosh(\alpha/2) \mbox{ and } b:= \cos(\beta/2)
$$
where $\alpha$ is a length and $\beta$ is an angle. Here the measure $\measure(\mu)$ is a length and $\bigmeasure(\mu)$ is the length measure on the boundary of the $\vec{k}$-concave core. 

As above, we obtain:
\begin{equation}
\label{ }
\measure(\mu)= 2\arccosh\left(\frac{T_{k_\alpha}(a)^2(c-\sqrt{c^2-1})+T_{k_\alpha}(a)T_{k_\beta}(b) + \sqrt{c^2-1}}{\sqrt{p(T_{k_\alpha}(a),T_{k_\beta}(b),c)}}\right)
\end{equation}
and

$$
\bigmeasure(\mu)= \coth(\alpha/2)\measure(\mu).
$$
hence
\begin{equation}\label{ }
\begin{split}
\bigmeasure(\mu) & = \frac{a}{\sqrt{1-a^2}}\measure(\mu)\\ 
& = \frac{2a}{\sqrt{a^2-1}}\arccosh\left(\frac{T_{k_\alpha}(a)^2(c-\sqrt{c^2-1})+T_{k_\alpha}(a)T_{k_\beta}(b) + \sqrt{c^2-1}}{\sqrt{p(T_{k_\alpha}(a),T_{k_\beta}(b),c)}}\right).
\end{split}
\end{equation}

Before completing the picture by passing to cusp limits, observe that the above measures are all expressions of the same quantity in $\C$ where we let $a$ and $b$ vary. To see this, assume that $-i\arccos(z)=\arccosh(z)$ (formally this depends on the position of $z$ in $\C$). Now since 
\begin{equation}
\label{ }
\frac{2a}{\sqrt{a^2-1}}= -i\frac{2a}{\sqrt{1-a^2}}
\end{equation}
the measures can be expressed in the same way. 

\subsection{Limiting to cusps}

We now compute the measures when one or both of the boundary elements $\alpha$ and $\beta$ are cusps. They are the measure of the geometric limit of the length measure on the boundary of the concave core, but also the analytic limit of the trace identities when $a$ or $b$ tends to 1 (and hence the cone point angle or the length goes to $0$). 

Observe that the limits are only problematic when $a$ tends to $1$. Indeed, the expressions are all continuous in $b=1$. 

We will take the limit when $\alpha$ and $\beta$ are closed geodesics. In the other cases, the computations are nearly identical. 

We begin with $k_\alpha,k_\beta$ finite. Recall that from Equation \ref{eq:start} above we have 
\begin{equation}
\begin{split}
\sinh\left({\tilde{\tilde{\alpha}}}/2-\measure(\mu)/2\right) = 
\frac{\sqrt{\left(\cosh\frac{k_{\alpha} \alpha}{2}\right)^2-1}\sqrt{\left(\cosh\frac{\gamma_\mu}{2}\right)^2-1}}
{\sqrt{ p(\cosh\frac{k_{\alpha} \alpha}{2}, \cosh\frac{k_{\beta} \beta}{2}, \cosh\frac{\gamma_\mu}{2})}}
\end{split}
\end{equation}
hence
$$
\bigmeasure(\mu) = \coth(k\alpha/2) \measure(\mu)
$$
where
\begin{equation}
\label{ }
\measure(\mu)=k_\alpha \alpha - 2\arcsinh\left( \frac{\sqrt{\left(\cosh\frac{k_{\alpha} \alpha}{2}\right)^2-1}\sqrt{\left(\cosh\frac{\gamma_\mu}{2}\right)^2-1}}{\sqrt{ p(\cosh\frac{k_{\alpha} \alpha}{2}, \cosh\frac{k_{\beta} \beta}{2}, \cosh\frac{\gamma_\mu}{2})}}
 \right).
\end{equation}
Taking the limit as $\alpha$ goes to $0$ this becomes
$$
\bigmeasure(\mu)= 2- 2\frac{\sqrt{\left(\cosh\frac{\gamma_\mu}{2}\right)^2-1}}{\sqrt{\left(\cosh\frac{k_{\beta} \beta}{2}\right)^2+\left(\cosh\frac{\gamma_\mu}{2}\right)^2+ 2 \cosh\frac{k_\beta \beta}{2} \cosh\frac{\gamma_\mu}{2}}}
$$
hence
\begin{equation}
\label{ }
\bigmeasure(\mu)= 2- 2\frac{\sinh\frac{\gamma_\mu}{2}}{\cosh\frac{k_{\beta} \beta}{2}+\cosh\frac{\gamma_\mu}{2}}=2 \frac{\cosh\frac{k_{\beta} \beta}{2}+e^{-\frac{\gamma_\mu}{2}}}{\cosh\frac{k_{\beta} \beta}{2}+\cosh\frac{\gamma_\mu}{2}}.
\end{equation}
In fine we obtain the following. 

{\it Orthogeodesics between a cusp and a geodesic}\\
The measure associated to $\mu$ is:
\begin{equation}
\label{ }
\bigmeasure(\mu)=2 \frac{\cosh\frac{k_{\beta} \beta}{2}+e^{-\frac{\gamma_\mu}{2}}}{\cosh\frac{k_{\beta} \beta}{2}+\cosh\frac{\gamma_\mu}{2}}.
\end{equation}

{\it Orthogeodesics between cusps}\\
When $\beta$ goes to $0$ from the above we obtain:
\begin{equation}
\label{ }
\bigmeasure(\mu)= \frac{4}{e^{\frac{\gamma_\mu}{2}}+1}.
\end{equation}

{\it Orthogeodesics between a cusp and a cone point}\\
The measure associated to $\mu$, computed the same way, is:
\begin{equation}
\label{ }
\bigmeasure(\mu)=2 \frac{\cos\frac{k_{\beta} \beta}{2}+e^{-\frac{\gamma_\mu}{2}}}{\cos\frac{k_{\beta} \beta}{2}+\cosh\frac{\gamma_\mu}{2}}.
\end{equation}


{\it Cusps and infinite grades} 

In the presence of cusps, our identities allow for grades go to infinity, but the measures associated to orthogeodesics either go from or towards a cusp will always be zero if this is the case. Indeed, both the curves $\gamma_\mu$ and the orthogeodesic $\mu \subset X$ are of infinite length. Hence, the only relevant geometric input is the length of orthogeodesics on the concave core, but if at least one of the grades $k_\alpha$ or $k_\beta$ goes to infinity, then this length is also infinite.


\section{The identities}\label{sec:identities}

We can now put all of this together to state the identities. Recall that $\OO'_{\vec{k}}(X)$ is the set of $\vec{k}$-prime orthogeodesics. 

\subsection{The general identity}

Let $X$ be an admissible hyperbolic surface, and $\vec{k}$ is a finite vector. Then
$$
\sum_{\mu \in \OO'_{\vec{k}}(X)} \bigmeasure(\mu) = \ell\left(\partial V_{\vec{k}}(X)\right)
$$
where
$$
\bigmeasure(\mu) = \frac{2a}{\sqrt{a^2-1}}\arccosh\left(\frac{T_{k_\alpha}(a)^2(c-\sqrt{c^2-1})+T_{k_\alpha}(a)T_{k_\beta}(b) + \sqrt{c^2-1}}{\sqrt{p(T_{k_\alpha}(a),T_{k_\beta}(b),c)}}\right)
$$
if $\mu$ leaves from a boundary geodesic $\alpha$,
$$
\bigmeasure(\mu) = \frac{2a}{\sqrt{1- a^2}}\arccos\left(\frac{T_{k_\alpha}(a)^2(c-\sqrt{c^2-1})+T_{k_\alpha}(a)T_{k_\beta}(b) + \sqrt{c^2-1}}{\sqrt{p(T_{k_\alpha}(a),T_{k_\beta}(b),c)}}\right)
$$
if $\mu$ leaves from a boundary geodesic $\alpha$, 
$$
\bigmeasure(\mu) =2 \frac{\cosh\frac{k_{\beta} \beta}{2}+e^{-\frac{\gamma_\mu}{2}}}{\cosh\frac{k_{\beta} \beta}{2}+\cosh\frac{\gamma_\mu}{2}}.
$$
if $\mu$ leaves from a boundary cusp $\alpha$ and goes to a boundary geodesic $\beta$, 
$$
\bigmeasure(\mu) =2 \frac{\cos\frac{k_{\beta} \beta}{2}+e^{-\frac{\gamma_\mu}{2}}}{\cos\frac{k_{\beta} \beta}{2}+\cosh\frac{\gamma_\mu}{2}}
$$
if $\mu$ leaves from a boundary cusp $\alpha$ and goes to a cone point $\beta$, and 
$$
\bigmeasure(\mu)= \frac{4}{e^{\frac{\gamma_\mu}{2}}+1}
$$
if $\mu$ goes between two cusps. 

\subsection{Cusp identities}

One interesting case of the identity is when the surface has only cusp boundary and $\vec{k}= \vec{1}$. In this case the identity counts the number of cusps :

\begin{theorem}\label{thm:cusps}
Let $X$ be a surface of genus $g$ and $n$ cusps with $\chi(X) = 2-2g -n \leq -1$ and $(g,n)\neq (0,3)$. Then 
$$
\sum_{\mu \in \OO'_{\vec{1}}(X)} \frac{2}{e^{\frac{\gamma_\mu}{2}}+1} = m.
$$
\end{theorem}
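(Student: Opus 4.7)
The plan is to derive this as a direct specialization of the abstract identity (Theorem~\ref{thm:abstract}) to the grading $\vec{k}=\vec{1}$ applied to a surface whose boundary elements are all cusps. Since cusps are $\vec{k}$-admissible for every grading, $X\in\M^{\vec{1}}(\Sigma)$ automatically; the hypothesis $\chi(X)\leq -1$ is what guarantees that a cusped hyperbolic metric on $\Sigma_{g,n}$ exists in the first place, and the exclusion $(g,n)\neq(0,3)$ corresponds exactly to the topological type where the $\vec{1}$-model surface $M_{\vec{1}}$ is obstructed by Gauss--Bonnet (a sphere with three cone points of angle $\pi$ would have negative area), so that $\vec{1}$-primality is well-defined via the model surface characterization of Proposition~\ref{prop:modelk}.

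First I would compute the right-hand side $\ell(\partial V_{\vec{1}}(X))$. Each of the $n$ cusp boundary elements is assigned grade $1$, so by definition its first natural collar $\CC_1(\delta_i)$ is the open horoball bounded by a horocycle of length $\tfrac{2}{1}=2$. These horoballs are pairwise disjoint and their complement in $X$ is precisely $V_{\vec{1}}(X)$, so $\partial V_{\vec{1}}(X)$ is a disjoint union of $n$ horocycles of length $2$, giving $\ell(\partial V_{\vec{1}}(X))=2n$. Next I would substitute the cusp-to-cusp gap measure derived at the end of Section~\ref{sec:measures}, namely
$$
\bigmeasure(\mu) \;=\; \frac{4}{e^{\gamma_\mu/2}+1},
$$
which is the only case that arises since every orthogeodesic of $X$ joins two (not necessarily distinct) cusps. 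Plugging into Theorem~\ref{thm:abstract} gives
$$
\sum_{\mu \in \OO'_{\vec{1}}(X)} \frac{4}{e^{\gamma_\mu/2}+1} \;=\; 2n,
$$
and dividing through by $2$ yields the stated identity (the $m$ on the right-hand side being evidently a typographical variant of $n$, matching the introduction).

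All the substantive work has already been absorbed into Theorem~\ref{thm:abstract} and the measure computations of Section~\ref{sec:measures}, so there is no real obstacle to overcome beyond this bookkeeping. The only point worth double-checking is the low-complexity case $(g,n)=(0,4)$: there $M_{\vec{1}}$ degenerates to area zero, but $\vec{1}$-primality of orthogeodesics remains unambiguous because every orthogeodesic between two distinct cusps is already simple (so trivially prime), and in this case the resulting identity coincides with the McShane identity for the four-cusped sphere, providing a reassuring consistency check for the whole framework.
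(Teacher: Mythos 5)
Your proof is correct and takes essentially the same route as the paper, which presents Theorem~\ref{thm:cusps} as an immediate specialization of the abstract identity (Theorem~\ref{thm:abstract}) to $\vec{k}=\vec{1}$ on an all-cusp surface, using the cusp-to-cusp measure $\bigmeasure(\mu)=4/(e^{\gamma_\mu/2}+1)$ from Section~\ref{sec:measures} and the observation that $\partial V_{\vec{1}}(X)$ is a union of $n$ horocycles each of length $2$. You also correctly identify that the $m$ on the right-hand side is a typo for $n$, and your Gauss--Bonnet explanation for excluding $(g,n)=(0,3)$ matches the paper's remark that no hyperbolic $\vec{1}$-model surface exists for the thrice-punctured sphere.
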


Note the above identity requires that the surface be of negative Euler characteristic and not be a pair of pants. 

In the case where the surface is homeomorphic to a pair of pants, and the boundary curves are all cusps, we get the following identity (the k-l-m identity):
\begin{theorem}\label{thm:klm}
Let $Y$ be the thrice punctured sphere and let $k\leq l \leq m$ be positive integers so that $ k + l + m >4$. Then 
$$
\sum_{\mu \in \OO'_{(k,l,m)}(Y)} \frac{2}{e^{\frac{\gamma_\mu}{2}}+1} = \frac{1}{k}+\frac{1}{l}+\frac{1}{m}.
$$
\end{theorem}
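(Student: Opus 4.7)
The plan is to specialize the abstract identity of Theorem \ref{thm:abstract} to $X = Y$ with grading $\vec{k} = (k,l,m)$, and then substitute the cusp-to-cusp measure formula from Section \ref{sec:measures}. The admissibility hypothesis $Y \in \M^{(k,l,m)}(\Sigma_{0,3})$ is automatic because all three boundary elements of $Y$ are cusps, and a cusp is admissible under any grading, finite or infinite. The role of the arithmetic condition $k+l+m>4$ is to guarantee that the three natural horoball collars $\CC_{k}(\delta_1), \CC_{l}(\delta_2), \CC_{m}(\delta_3)$ at the cusps are pairwise disjoint and embedded in $Y$, so that the concave core $V_{(k,l,m)}(Y)$ is well-defined with nondegenerate boundary.

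Once $V_{(k,l,m)}(Y)$ is in place, its boundary consists of three horocycles of lengths $2/k$, $2/l$, $2/m$ (the standard horocycle length of the $k_i$-th natural collar at a cusp), so
$$\ell\bigl(\partial V_{(k,l,m)}(Y)\bigr) = \frac{2}{k}+\frac{2}{l}+\frac{2}{m}.$$
Theorem \ref{thm:abstract} then immediately yields
$$\sum_{\mu \in \OO'_{(k,l,m)}(Y)} \bigmeasure(\mu) = \frac{2}{k}+\frac{2}{l}+\frac{2}{m}.$$

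Every $\mu \in \OO'_{(k,l,m)}(Y)$ runs between two cusps of $Y$, so the applicable measure is the cusp-to-cusp formula derived at the end of Section \ref{subsec:Computations}, namely
$$\bigmeasure(\mu) = \frac{4}{e^{\gamma_\mu/2}+1}.$$
A pleasant feature is that this expression is independent of the grades at the endpoints: in the limit where both boundary elements degenerate to cusps, the factors $\cosh(k_\alpha\alpha/2)$ and $\cosh(k_\beta\beta/2)$ both tend to $1$, washing out any dependence on $k,l,m$ in the measure itself. Inserting this formula into the displayed equality and dividing both sides by $2$ produces
$$\sum_{\mu \in \OO'_{(k,l,m)}(Y)} \frac{2}{e^{\gamma_\mu/2}+1} = \frac{1}{k}+\frac{1}{l}+\frac{1}{m},$$
which is exactly the claimed identity.

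The main obstacle to turning this outline into a complete proof is verifying that $k+l+m>4$ is precisely the condition under which the three natural collars are mutually disjoint and embedded in $Y$ — not a strictly weaker or stronger inequality. All other ingredients (the abstract identity, the horocycle length formula for natural collars, and the limit producing the measure $4/(e^{\gamma_\mu/2}+1)$) are already established earlier in the paper, so this final check is a purely geometric calculation, conveniently carried out in the upper-half-plane uniformization of $Y$ with cusps at $0$, $1$, and $\infty$, where the horoball geometry at each cusp is explicit.
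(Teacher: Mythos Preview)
Your overall strategy—specialize Theorem \ref{thm:abstract} to $Y$, read off $\ell(\partial V_{(k,l,m)}(Y)) = 2/k + 2/l + 2/m$ from the horocycle lengths, insert the cusp-to-cusp measure $4/(e^{\gamma_\mu/2}+1)$, and halve—is exactly how the paper obtains this statement, and those steps are correct.

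Where the argument goes off track is the role you assign to $k+l+m > 4$. You claim it guarantees that the three natural collars are pairwise disjoint and embedded, and you flag this as the main remaining obstacle. But that is not what the condition is for: the $j$-th natural collar of a cusp is the open horoball of boundary length $2/j$, which is contained in the first natural collar (boundary length $2$), and the paper records in Section \ref{sec:setup} that first natural collars are always embedded and pairwise disjoint. Hence $V_{(k,l,m)}(Y)$ is perfectly well defined, with three horocyclic boundary components of the stated lengths, for \emph{every} positive grading, including $(1,1,1)$ and $(1,1,2)$. The upper-half-plane computation you propose would simply confirm this and would never produce the inequality $k+l+m>4$.

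The actual reason for the restriction lies in the primality and gap machinery of Sections \ref{sec:prime}--\ref{sec:ergodicity}, not in the geometry of the concave core. For the smallest gradings on a pair of pants, the curve $\gamma_\mu$ attached to the simple orthogeodesic between two grade-$1$ cusps is homotopic to the third peripheral loop, hence parabolic rather than a closed geodesic; the immersed pair of pants $Q_\mu$ then degenerates to all of $Y$ and the gap decomposition of Section \ref{sec:ergodicity} breaks down. (This is exactly why Section \ref{sec:prime} explicitly excludes pairs of pants when $\vec{k}=\vec{1}$, and why the model-surface characterization needs the orbifold with cone angles $\pi/k,\pi/l,\pi/m$ to carry a hyperbolic or Euclidean structure.) That is the content the arithmetic hypothesis is meant to capture, not horoball disjointness.
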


The particular case in Theorem \ref{thm:cusps} where $g=0$ and $m=4$, as well as the case in Theorem \ref{thm:klm} where $k=1, l=2$ and $m=2$, will both be discussed in the next section. 

\subsection{Euclidean identities}

When $(g,n)$ is equal to either $(0,3)$ or $(0,4)$, there are certain cases where the model surface is Euclidean. This happens in a finite number of cases, namely when the "half" angles sum to $\pi$ for $(0,3)$ and to $2\pi$ for $(0,4)$. Thus for gradings $(k_1,k_2,k_3)$ on three holed spheres satisfying
$$
\frac{1}{k_1}+\frac{1}{k_2}+\frac{1}{k_3}=2
$$
and for gradings $(k_1,k_2,k_3,k_4)$ satisfying
$$
\frac{1}{k_1}+\frac{1}{k_2}+\frac{1}{k_3}+\frac{1}{k_4}=4.
$$
The cases of equality are easy to identify. For $(g,n) = (0,3)$ we have
\begin{equation}
(k_1,k_2,k_3) =(1,2,2)
\end{equation}
and for $(g,n) = (0,4)$ this only occurs when
\begin{equation}
(k_1,k_2,k_3,k_4) =(1,1,1,1).
\end{equation}
We shall see how these two cases are related in the sequel.

{\it The case $(k_1,k_2,k_3)=(1,2,2)$.}

The model surface is a Euclidean orbifold surface (which is topologically a sphere) with cone points of angles $\pi$, $\pi/2$ and $\pi/2$ (see Figure \ref{fig:Model122}). Such a surface can be obtained by taking the symmetric double of a right-angled Euclidean isoceles triangle.

\begin{figure}[H]
\leavevmode \SetLabels
\L(.34*0.95) $2$\\
\L(.34*0.03) $1$\\
\L(.65*0.03) $2$\\
\endSetLabels
\begin{center}
\AffixLabels{\centerline{\includegraphics[width=5cm]{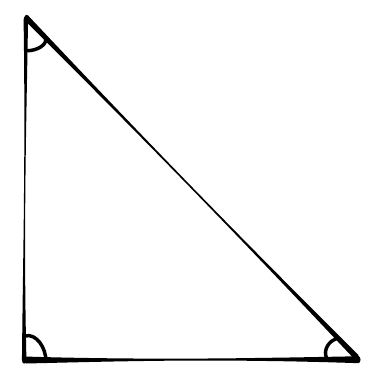}}}
\vspace{-30pt}
\end{center}
\caption{The model surface of a three holed sphere with grades $1, 2, 2$}
\label{fig:Model122}
\end{figure}

On this surface, there are infinitely many properly immersed geodesics between the cone points. One way to see this is to look at billiard paths on the underlying triangle. Each of these corresponds to an element of the index set. As will be explained in the next section, the identity corresponding to the boundary element with grade $1$ is in fact a version of the McShane identity. A particular case of the identity is when all boundary curves are cusps, in which case the identity is a case of Theorem \ref{thm:klm} above with $k=1,l=2$ and $m=2$, thus 
$$
\sum_{\mu \in \OO'_{(1,2,2)}(Y)} \frac{2}{e^{\frac{\gamma_\mu}{2}}+1} = 2.
$$

{\it The four holed sphere with grades $(1,1,1,1)$.}

In this case the identity is really a version of the McShane identity. Indeed, the model surface is a Euclidean spherical orbifold with four cone points of angle $\pi$ (see Figure \ref{fig:Model1111}). It can be obtained as the symmetric double of a Euclidean rectangle.

\begin{figure}[H]
\leavevmode \SetLabels
\L(.34*0.95) $1$\\
\L(.34*0.03) $1$\\
\L(.65*0.03) $1$\\
\L(.65*0.95) $1$\\
\endSetLabels
\begin{center}
\AffixLabels{\centerline{\includegraphics[width=5cm]{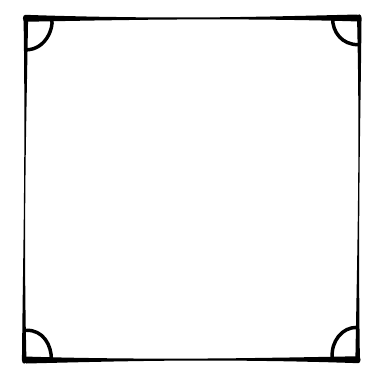}}}
\vspace{-30pt}
\end{center}
\caption{The model surface of a four holed sphere with grades $1, 1, 1, 1$.}
\label{fig:Model1111}
\end{figure}
This surface is also a demi-translation surface, and geodesics between cone-points are saddle connections. In particular, all of these geodesics are simple and go between distinct cone points. In particular this implies that the orthogeodesics in the index set of this identity are exactly the set of simple orthogeodesics that lie between distinct boundary curves. 

For each such simple orthogeodesic $\mu$ corresponds a curve $\gamma_\mu$. As $\mu$ is simple and the initial and terminal grades are $1$, the curve $\gamma_\mu$ is simple too. If $X$ is a four holed sphere with four cusps as boundary, the identity becomes
$$
\sum_{\mu \in \OO'_{(1,1,1,1)}(X)} \frac{2}{e^{\frac{\gamma_\mu}{2}}+1} = 4.
$$
Note that each simple closed geodesic corresponds to exactly $4$ elements of the index set: the unique simple orthogeodesics disjoint from it. (There are $2$ unoriented such orthogeodesics, hence $4$ oriented simple orthogeodesics.) Rewriting the identity in terms of a sum of simple closed geodesics, this becomes
$$
\sum_{\gamma} \frac{1}{e^{\frac{\gamma}{2}}+1} = \frac{1}{2}.
$$
The relationship between this identity and the $(1,2,2)$ identity for three holed spheres is discussed in the next section. 

\section{Further properties of the identities}\label{sec:finalobservations}

We end the paper with a few observations.

\subsection{Counting problems}

For both the Basmajian and McShane type identities, a lot is known about related counting problems. The counting problems are related to the index set: given $L$, how many elements of the index set are there of length less than $L$? The asymptotic behavior when $L$ goes to infinity is known for both cases. For the Basmajian identity this was solved by \cite{MartinEtAl} and the number grows exponentially (see \cite{He2,Parkkonen-Paulin} for related and more general results). For McShane type identities, there is asymptotic growth, which follows essentially from the work of Mirzakhani on the growth of simple closed geodesics \cite{MirzakhaniCount} (see \cite{Erlandsson-Souto} for a general reference on counting mapping class group orbits of curves). 

Of course, one has to be careful as how one is measuring the length of an element of the index set: if the index set is a collection of orthogeodesics between boundary curves, then the length of the orthogeodesic is a natural measure. If the index set is an embedded or immersed pair of pants, the total length of boundary curves would be a natural choice. In any event, we don't aim to prove precise asymptotic formulas for our identities, but simply state the problem and make some immediate observations. For exact formulas, growth will invariably be dependent on the grading. 

\begin{observation}
For fixed $\Sigma$, $\vec{k}$ and any admissible metric $X\in {\mathcal M}^{\vec{k}} (\Sigma)$ and the growth of the number of elements in the index set is roughly quadratic if the surface $M_{\vec{k}}$ is Euclidean, and roughly exponential when $M_{\vec{k}}$ is hyperbolic.\end{observation}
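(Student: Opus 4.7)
The plan is to exploit Proposition \ref{prop:modelk}: the index set $\OO'_{\vec{k}}(X)$ is in bijection with the set of properly immersed geodesic arcs between cone points of the model surface $M_{\vec{k}}$. Since this bijection only depends on $\Sigma$ and $\vec{k}$ as a set, the counting problem can be transported to $M_{\vec{k}}$, and then estimates can be transferred back to $X$ by comparing lengths. The first step is therefore to rephrase the observation on $M_{\vec{k}}$, which is a well-studied object in each of the two regimes.

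On $M_{\vec{k}}$, the geometry splits cleanly. In the Euclidean case (e.g.\ the cases exhibited in Section \ref{sec:identities} above), $M_{\vec{k}}$ is a flat orbifold, and properly immersed geodesic arcs with endpoints at cone points pull back, under a suitable branched cover that unfolds the orbifold points, to saddle connections on a half-translation surface. Quadratic growth of such saddle connections is then the classical Masur upper and lower bound, refined by Eskin--Masur. In the hyperbolic case, $M_{\vec{k}}$ is a negatively curved (orbifold) hyperbolic surface, and counting geodesic arcs with endpoints at two prescribed cone points up to length $L$ falls under Margulis-type orbit counting, which gives a growth of order $e^{hL}/L$ where $h$ is the topological entropy of the geodesic flow on $M_{\vec{k}}$. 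In particular, one gets roughly quadratic growth in the Euclidean case and roughly exponential growth in the hyperbolic case.

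To transfer the count from $M_{\vec{k}}$ to $X$, I would compare the length $\ell_X(\mu)$ of an orthogeodesic $\mu$ on $X$ with the length $\ell_{M_{\vec{k}}}(\mu)$ of the corresponding geodesic arc on $M_{\vec{k}}$. The universal covers $\tilde X$ and $\tilde M_{\vec{k}}$ are $\pi_1(\Sigma)$-equivariantly quasi-isometric (both admit cocompact actions by the same group), so lengths of corresponding geodesics satisfy a bi-Lipschitz relation $\ell_X(\mu) \leq C\, \ell_{M_{\vec{k}}}(\mu) + D$ and vice versa, with constants depending only on $X$ and $\vec{k}$. Since polynomial and exponential growth rates are invariant under such affine changes of the length parameter, the growth order obtained on $M_{\vec{k}}$ carries over to $X$.

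The main obstacle is making the length comparison rigorous in the presence of cone points and cusps, where the two metrics are locally very different: a naive bi-Lipschitz bound fails near the cone points, and one must instead control lengths inside a common concave core and add the boundary-collar contributions separately. A more elegant route, in the hyperbolic regime, is to bypass the comparison altogether by choosing a hyperbolic representative with geodesic boundary in $\M^{\vec{k}}(\Sigma)$ when one exists, invoking known counting results in that case, and then using that the index set is independent of the metric to port the exponential lower bound to any other admissible $X$; the Euclidean regime, however, genuinely requires the saddle-connection picture on $M_{\vec{k}}$ together with a careful length comparison.
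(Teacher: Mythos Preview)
Your approach is the same as the paper's: transfer the count to properly immersed geodesic arcs on the model surface $M_{\vec{k}}$ via a length comparison, and then invoke the known dichotomy (quadratic versus exponential) for arc growth on Euclidean versus hyperbolic orbifolds. The paper's own argument is in fact terser than yours on both steps --- it simply asserts the comparability of counts and the growth dichotomy on $M_{\vec{k}}$ without naming the results (Masur, Eskin--Masur, Margulis) you cite.

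One correction worth making: the quasi-isometry justification is not right as stated. The orbifold fundamental group of $M_{\vec{k}}$ is a proper quotient of $\pi_1(\Sigma)$ (the cone points of angle $\pi/k_i$ impose torsion relations), so the two universal covers do not carry cocompact actions of the \emph{same} group; and in the Euclidean regime the covers are $\mathbb{E}^2$ and $\mathbb{H}^2$, which are certainly not quasi-isometric. You already flag this as the main obstacle, and the fix you sketch --- compare lengths on a common compact core where the two metrics are honestly bi-Lipschitz, and bound the collar contributions separately --- is the correct route, and is exactly what the paper's bare assertion of ``rough comparability'' is hiding.
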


\begin{proof}
The length of an element in an index set can either be the length of an orthogeodesic, or the length of boundary curve of an immersed pair of pants. As we are only interested in rough growth, and the two are roughly comparable, either way of measuring length will do. 

Now observe than the number of index elements of length less than $L$ is roughly comparable to the corresponding number of properly embedded geodesics on $M_{\vec{k}}$. (More precisely, by roughly we mean, that for any $X$, there exist constants $c,C>0$ such that the number of elements of the index set is at least $cN(L)$ and at most $CN(L)$ where $N(L)$ the number of properly immersed geodesics on $M_{\vec{k}}$. 

To conclude it suffices to observe that $N(L)$ will grow either quadratically or exponentially depending on whether the (orbifold) universal cover is the Euclidean or hyperbolic plane. 
\end{proof}

\subsection{The under cover identity}

Let $\pi : \tilde{\Sigma} \to \Sigma$ be a topological regular covering map of order $r$ between punctured surfaces, possibly ramified in the punctures. We can realize this cover as an isometry between surfaces $\tilde{X}$ and $X$, possibly ramified over the boundary elements. So as not to introduce too much notation, we continue to denote this covering map as
$$\pi: \tilde{X} \to X.$$

We observe that a natural identification between orthogeodesics of $\tilde{X}$ and those of $X$. Indeed, an isometry must send an orthogeodesic of length $\ell$ on $\tilde{X}$ to an orthogeodesic of length $\ell$ on $X$, and conversely, an orthogeodesic on $X$ of length $\ell$ lifts to exactly $r$ copies of on $\tilde{X}$. The covering map $\pi$ also acts on boundary components, and doesn't necessarily act with the same degree of ramification. 

Putting together these observations shows that in fact there is a relationship between identities on $\tilde{X}$ or on $X$. Suppose we have an identity on $X$ with grades $\vec{k}$. This identity lifts to an identity on $\tilde{X}$ with grades $\vec{\tilde{k}}$ where the grades $\tilde{k}_i $ are computed as follows. Let $\pi(\tilde{\beta}_i)= \beta_j$ be the image of boundary component $\tilde{\beta}_i$ by $\pi$ and let $r_i\in \N$ be the degree of ramification. Then $\tilde{k}_i=r_i k_j$. Equivalently, this corresponds to isometric regular covers of the model surfaces.

An example is given by two of the Euclidean identities. Indeed it is easy to see that the model surface of the three holed sphere with grades $1,2,2$ admits the model surface of the four holed sphere with grades $1,1,1,1$ as a two fold cover. But for the latter, you need to choose it to be the symmetric double of a square, and not just the symmetric double of a rectangle. The $(1,2,2)$ identity on the three holed sphere then lifts to the $(1,1,1,1)$ identity on the four holed sphere, but only to surfaces with the corresponding self isometry. 

\subsection{Final remarks}

Many of these ideas can be used to obtain a generalization of Bridgeman's identity, in particular to surfaces with cusps. The decomposition, in this case, is a decomposition of the unit tangent bundle with identical index sets to the ones explained here. The proof and explicit computations of the measures will be the object of forthcoming paper.

{\em Addresses:}\\
A. B.: The Graduate Center, CUNY, 365 Fifth Ave., N.Y., N.Y., 10016 {\it and}\\
Hunter College, CUNY, 695 Park Ave., N.Y., N.Y., 10065, USA\\
H.P.: Department of Mathematics, University of Luxembourg, Luxembourg \\
S.T.: National University of Singapore, Singapore\\
{\em Emails:}
 \href{mailto:abasmajian@gc.cuny.edu}{abasmajian@gc.cuny.edu},
 \href{mailto:hugo.parlier@unifr.ch}{hugo.parlier@uni.lu}, \href{mailto:mattansp@nus.edu.sg}{mattansp@nus.edu.sg}\\

\end{document}